\def\ps@pprintTitle{%
 \let\@oddhead\@empty
 \let\@evenhead\@empty
 \def\@oddfoot{}%
 \let\@evenfoot\@oddfoot}
\def\vv#1#2#3{#1^{(#3)}_{#2}}
\def\dioni#1#2{\textcolor{red}{#1}{#2}}
\newenvironment{proof}{\smallskip\noindent{{\it Proof.}}\hskip \labelsep}%
            {\hfill\penalty10000\raisebox{-.09em}{\large\bf\rm $\blacksquare$}\par\medskip}
\newtheorem{theorem}{Theorem}[section]
\newtheorem{lemma}[theorem]{Lemma}
\newtheorem{proposition}[theorem]{Proposition}
\newtheorem{corollary}[theorem]{Corollary}
\newtheorem{definition}{Definition}
\newtheorem{remark}{Remark}[section]
\def\xx{\mathbf{x}}
\def\xe{\mathbf{x}^*}
\def\x12e{\mathbf{x}_{\frac12}^*}
\begin{document}
\begin{frontmatter}

\title{A non-separable progressive multivariate WENO-$2r$ point value}\tnotetext[label1]{The second author has been supported through project 20928/PI/18 (Proyecto financiado por la Comunidad Aut\'onoma de la Regi\'on de Murcia a trav\'es de la convocatoria de Ayudas a proyectos para el desarrollo de investigaci\'on cient\'ifica y t\'ecnica por grupos competitivos, incluida en el Programa Regional de Fomento de la Investigaci\'on Cient\'ifica y T\'ecnica (Plan de Actuaci\'on 2018) de la Fundaci\'on S\'eneca-Agencia de Ciencia y Tecnolog\'ia de la Regi\'on de Murcia) and by the national research project PID2019-108336GB-I00. The first and fourth authors have been supported through project CIAICO/2021/227 and by grant PID2020-117211GB-I00 funded by MCIN/AEI/10.13039/501100011033. The third author has been supported by NSF grant DMS-2309249.}

\author[UV]{Pep Mulet}
\ead{pep.mulet@uv.es}
\author[UPCT]{Juan Ruiz-\'Alvarez}
\ead{juan.ruiz@upct.es}
\author[UB]{Chi-Wang Shu}
\ead{chi-wang\_shu@brown.edu}
\author[UV]{Dionisio F. Y\'a\~nez}
\ead{dionisio.yanez@uv.es}
\date{Received: date / Accepted: date}

\address[UV]{Departamento de Matem\'aticas, Facultad de Matemáticas, Universidad de Valencia, Valencia, Spain}
\address[UPCT]{Departamento de Matemática Aplicada y Estadística, Universidad  Polit\'ecnica de Cartagena, Cartagena, Spain}
\address[UB]{Division of Applied Mathematics, Brown University, Providence, Rhode Island, USA}
\begin{abstract}
The weighted essentially non-oscillatory  {technique} using a stencil of $2r$ points (WENO-$2r$) is an interpolatory method that consists in obtaining a higher approximation order from the non-linear combination of interpolants of $r+1$ nodes. The result is an interpolant of order $2r$ at the smooth parts and order $r+1$ when an isolated discontinuity falls at any grid interval of the large stencil except at the central one. Recently, a new WENO method based on Aitken-Neville's algorithm has been designed for interpolation of equally spaced data at the mid-points and presents progressive order of
accuracy close to discontinuities. This paper is devoted to constructing
a general progressive WENO method for non-necessarily uniformly spaced data and
several variables interpolating in any point of the central
interval. Also, we provide explicit formulas for linear and non-linear
weights and prove  the order obtained. Finally, some numerical
experiments are presented to check the theoretical results.
\end{abstract}

%
%
\begin{keyword}
WENO, non-linear reconstruction, order of accuracy, high accuracy interpolation
\end{keyword}

\end{frontmatter}


\def\S{\mathcal{S}}

\section{Introduction and review}

In the last years, WENO methods have been developed and used in several
applications, mainly to obtain numerical solutions of partial
differential equations \dioni{}{(PDEs)} but also in other fields, such as image processing or
computer-aided design (see e.g. \cite{ABM,LPR99,Liu}). The idea  is to
compute a non-linear combination of interpolations through polynomials of
degree $r$, aiming to obtain the maximum possible order $2r$ when the data is
free of discontinuities, and order $r+1$ at the non-smooth parts. In what follows, we
briefly review the method.
Let us denote as $X$ a uniform partition of the interval
$[a, b]$ in $J$ subintervals:
$$X=\{x_i\}^{J}_{i=0}, \quad x_i=a+i\cdot h, \quad h=\frac{b-a}{J},$$
and consider the point-value discretization of a piecewise smooth function $f$ at the nodes $x_i$,
\begin{equation*}
f_i=f(x_i),\, i=0,\hdots,J, \quad f=\left\{f_i\right\}_{i=0}^{J}.
\end{equation*}
In this setting, the WENO method with $2r$ nodes interpolates  at the mid-point of the interval $(x_{i-1}, x_i)$, denoted by $x_{i-\frac12}$,  using the stencil $\mathcal{S}^{2r}_0=\{x_{i-r},\cdots, x_{i+r-1}\}$. We construct this interpolation by means of the following convex combination:
\begin{equation}\label{PO}
\mathcal{I}^{2r-1}(x_{i-\frac{1}{2}};f)=\sum_{k=0}^{r-1}
\omega_{k}^r p_{k}^r(x_{i-\frac{1}{2}}),
\end{equation}
where $\omega_{k}^r\ge 0, \, k=0,\cdots, r-1$ are non-linear
 {(data-dependent)} positive weights such that
$\sum_{k=0}^{r-1}\omega_{k}^r=1$, and $p_{k}^r$ are the Lagrange interpolants with nodes $\mathcal{S}^r_k=\{x_{i-r+k},\cdots, x_{i+k}\}$.

The values of the weights $\omega_{k}^r$ are designed to obtain an order of
accuracy $2r$ at $x_{i-\frac{1}{2}}$ when the function is smooth in
the large stencil, as follows: There are optimal weights $C_{k}^r\ge 0$, with $k=0,\hdots,r-1$ that satisfy the following equality:
\begin{equation*}
p_{0}^{2r-1}\left(x_{i-\frac{1}{2}}\right)
=\sum_{k=0}^{r-1}{C}_{k}^r p_{k}^r\left(x_{i-\frac{1}{2}}\right).
\end{equation*}
A formula for these values is obtained in \cite{ABM}:
\begin{equation}\label{opt_w}
{C}_{k}^r=\frac{1}{2^{2r-1}}
\binom{2r}{2k+1}
,  \quad k=0,\cdots, r-1.
\end{equation}
As stated in \cite{Liu}, the weights $\omega_{k}^r$  satisfy
\begin{equation}\label{omega}
\omega_{k}^r={C}_{k}^r+O(h^{{\kappa}}),\quad k=0,\cdots, r-1,
\end{equation}
at smooth zones, with $\kappa\ge r-1$, thus assuring that the interpolation in (\ref{PO}) attains order of accuracy $2r$
\begin{equation*}
f(x_{i-\frac{1}{2}})-\mathcal{I}^{2r-1}\left(x_{i-\frac{1}{2}};f\right)=O(h^{2r}).
\end{equation*}
In \cite{JiangShu,Liu},  \eqref{omega} is achieved via the  following expressions:
\begin{equation}\label{pesos}
\omega_{k}^r=\frac{\alpha_{k}^r}{\sum_{j=0}^{r-1}\alpha_{j}^r},\,\,  \textrm{ where } \alpha_{k}^r=\frac{{C}_{k}^r}{(\epsilon+I_k^r)^t}, \quad k=0,\cdots, r-1.
\end{equation}
In the previous expression, the parameter $t$ is an integer that assures maximum order of accuracy close to the discontinuities. The parameter $\epsilon>0$ is introduced to avoid divisions by zero, and is usually forced to take the size of the smoothness indicators at smooth zones. In our numerical tests, we will set it to $\epsilon=h^2$.  The values $I_k^r$ are called {\it smoothness indicators} for $f(x)$ on each sub-stencil of $r$ points.

\dioni{}{Since it was first introduced in \cite{Liu}, several successive improvements have been proposed for WENO algorithms. They have been focused on enhancing their accuracy, efficiency, and robustness in approximating solutions to hyperbolic conservation laws, but also on extending their applicability to other contexts such as approximation and interpolation of data. Initially introduced by Liu, Osher, and Chan in 1994 \cite{Liu}, WENO schemes have gone through several advancements. The WENO schemes proposed by Jiang and Shu in 1996 \cite{JiangShu}, improved the original idea in \cite{Liu}, by proposing new smoothness indicators inspired by the measure of the total variation. These smoothness indicators were more capable of detecting discontinuities and allowed to extended the idea of WENO to higher orders of accuracy. However, the classical WENO scheme might experience a loss of accuracy when encountering critical points in the solution: for instance, a fifth-order WENO scheme may only attain third-order accuracy near smooth extrema \cite{HAP05}. In fact, in \cite{HAP05}, the authors proposed the WENO-M scheme, that not only addressed the accuracy issue but also marked the first significant improvement in the solution quality near shocks and high gradients. However, the introduced mapping method proved to be computationally expensive. This study led to the publication of the article \cite{BCCSD08} where the authors proposed the WENO-Z scheme, that uses a new set of weights, derived from previously unused information within the classical WENO scheme: a higher-order global smoothness indicator constructed through a linear combination of the original smoothness indicators. This scheme achieved superior results with almost the same computational effort as the classical WENO method. After that, many variants of WENO schemes were proposed, starting from these two methodologies (see, e.g. \cite{FHW14, KLY13,  HYY20}). In the context of data approximation, we proposed a first improvement of WENO algorithm in 2018 \cite{ARS19}, where the algorithm attained a progressive order of accuracy close to the discontinuities using a recursive formulation of the WENO weights.
Using WENO interpolation, maximum order of accuracy is obtained in smooth parts of the data but the accuracy is reduced to order $r+1$ when, at least, a discontinuity crosses a stencil. In \cite{ARSY20}, Amat, Ruiz, Shu and Y\'a\~nez present a new WENO method using the Aitken-Neville algorithm to obtain progressive order of approximation. This method is introduced for the point-value discretization in a uniform grid, and to approximate at the mid-points of the intervals. In \cite{ARSY22}, the algorithm is extended to calculate the derivative value of a function knowing its evaluation in a non-uniform grid and for any point of the considered interval.}

In this paper, we extend this method to $n$ dimensions and prove its
theoretical properties. We start with dimension 1 in Section
\ref{wenoprogresivo1d}, following the ideas presented in
\cite{ARSY20,ARSY22}. \dioni{}{We provide a new recursive algorithm to obtain a non-linear
scalar approximation for any point in the central interval in Section \ref{wenoprogresivo1d}.
Then, in Section \ref{sec:multivariatelinearandwenoclassic}, we review the multidimensional Lagrange interpolation
using tensor product, and the WENO version designed by Aràndiga et al. in \cite{arandigamuletrenau}. Subsequently, we introduce, in Section \ref{weno2d}},
our general method for any dimension $n$ starting with $n=2$,
and give an explicit expression for the optimal weights. Afterwards, we
generalize the method to any number of dimension $n$, and prove that the
approximation attains the maximum order of accuracy when the nodes are in a region
where $f$ is smooth, and has an increasing order of accuracy depending on where the
isolated discontinuity is. In
Section \ref{smoothaccur}, we present the smoothness indicators. The theoretical results are confirmed by
the numerical examples, that are presented in Section \ref{numericalexps}. Finally, some conclusions are drawn in the last section.

\section{A new univariate WENO-$2r$ algorithm with progressive order of accuracy close to discontinuities}\label{wenoprogresivo1d}

Recently, Amat, Ruiz, Shu and Y\'a\~nez in \cite{ARSY20} introduced a new centered WENO-$2r$ method, which consists in using all the points free of discontinuities to interpolate a value at the mid-point of an interval. In this section, we generalize this interpolation to non-uniform grids. Let $[a,b]\subset \mathbb{R}$ be an interval, we consider a non-uniform grid $a=x_0<x_1<\hdots<x_J=b,$ with
$h=\max_{l=1,\hdots,J}|x_l-x_{l-1}|$ and $i\in\mathbb{N}$ such that $0\leq i-r \leq i+r-1 \leq J$. We consider a point, $x^*\in (x_{i-1},x_i)$; the largest stencil
$\mathcal{S}^{2r}_0=\{x_{i-r},\cdots, x_{i+r-1}\},$
and construct the polynomial which interpolates at these nodes, that we
denote by $p^{2r-1}_0$. Using the Aitken-Neville formula, we express
this polynomial using the following expression
\begin{equation*}
p^{2r-1}_0(x^*)=\sum_{j_0=0}^1C^{2r-2}_{0,j_0}(x^*)p_{j_0}^{2r-2}(x^*),
\end{equation*}
which  involves the two interpolatory polynomials, $p_{j_0}^{2r-2}$,
$j_0=0,1$ with respective stencils:
 $$\mathcal{S}^{2r-1}_0=\{x_{i-r},\cdots, x_{i+r-2}\},\quad \mathcal{S}^{2r-1}_1=\{x_{i-r+1},\cdots, x_{i+r-1}\},$$
where $C^{2r-2}_{0,0}(x^*)$ and $C^{2r-2}_{0,1}(x^*)$ are the optimal weights. We repeat this process with each polynomial up to degree $r$. A representation of this process can be seen in Figure \ref{figuraesquema} (where we have removed the dependence on the value $x^*$), \cite{ARSY20}.
\begin{figure}[!hbtp]
\begin{center}
  \begin{tikzpicture}
\draw [-] (-0.5,16) -- (0.5,18);
\draw [-] (-0.5,16) -- (0.5,14);
\draw [-] (-0.5,8) -- (0.5,10);
\draw [-] (-0.5,8) -- (0.5,6);

\draw [-] (1.5,18) -- (2.5,19);
\draw [-] (1.5,18) -- (2.5,17);
\draw [-] (1.5,14) -- (2.5,15);
\draw [-] (1.5,14) -- (2.5,13);
\draw [-] (1.5,10) -- (2.5,11);
\draw [-] (1.5,10) -- (2.5,9);
\draw [-] (1.5,6) -- (2.5,7);
\draw [-] (1.5,6) -- (2.5,5);

\draw [dashed] (3.5,19) -- ( 4.5,19.5);
\draw [dashed] (3.5,19) -- ( 4.5,18.5);
\draw [dashed] (3.5,17) -- ( 4.5,17.5);
\draw [dashed] (3.5,17) -- ( 4.5,16.5);
\draw [dashed] (3.5,15) -- ( 4.5,15.5);
\draw [dashed] (3.5,15) --  ( 4.5,14.5);
\draw [dashed] (3.5,13) --  ( 4.5,13.5);
\draw [dashed] (3.5,13) --  ( 4.5,12.5);
\draw [dashed] (3.5,11) -- ( 4.5,11.5);
\draw [dashed] (3.5,11) -- ( 4.5,10.5);
\draw [dashed] (3.5,9) -- ( 4.5,9.5);
\draw [dashed] (3.5,9) -- ( 4.5,8.5);
\draw [dashed] (3.5,7) -- ( 4.5,7.5);
\draw [dashed] (3.5,7) --  ( 4.5,6.5);
\draw [dashed] (3.5,5) --  ( 4.5,5.5);
\draw [dashed] (3.5,5) --  ( 4.5,4.5);

\draw [-] ( 6.5,21) -- ( 7.4,21.5);
\draw [-] ( 6.5,21) -- ( 7.4,20.5);
\draw [-] ( 6.5,19) -- ( 7.4,19.5);
\draw [-] ( 6.5,19) -- ( 7.4,18.5);
\draw [-] ( 6.5,17) -- ( 7.4,17.5);
\draw [-] ( 6.5,17) -- ( 7.4,16.5);
\draw [-] ( 6.5,15) -- ( 7.4,15.5);
\draw [-] ( 6.5,15) -- ( 7.4,14.5);
\draw [-] ( 6.5,13) -- ( 7.4,13.5);
\draw [-] ( 6.5,13) -- ( 7.4,12.5);
\draw [-] ( 6.5,11) -- ( 7.4,11.5);
\draw [-] ( 6.5,11) -- ( 7.4,10.5);
\draw [-] ( 6.5,9)  -- ( 7.4,9.5) ;
\draw [-] ( 6.5,9)  -- ( 7.4,8.5) ;
\draw [-] ( 6.7,5)  -- ( 6.9,5.5)   ;
\draw [-] ( 6.7,5)  -- ( 6.9,4.5) ;
\draw [-] ( 6.7,3)  -- ( 6.9,3.5);
\draw [-] ( 6.7,3)  -- ( 6.9,2.5);

\path node at ( -1,16) {$C^{2r-2}_{0,0}$}
node at ( -1,8) {$C^{2r-2}_{0,1}$}

node at ( 1,18) {$C^{2r-3}_{0,0}$}
node at ( 1,14) {$C^{2r-3}_{0,1}$}
node at ( 1,10) {$C^{2r-3}_{1,1}$}
node at ( 1,6) {$C^{2r-3}_{1,2}$}

node at ( 3,19) {$C^{2r-4}_{0,0}$}
node at ( 3,17) {$C^{2r-4}_{0,1}$}
node at ( 3,15) {$C^{2r-4}_{1,1}$}
node at ( 3,13) {$C^{2r-4}_{1,2}$}
node at ( 3,11) {$C^{2r-4}_{1,1}$}
node at ( 3,9) {$C^{2r-4}_{1,2}$}
node at ( 3,7) {$C^{2r-4}_{2,2}$}
node at ( 3,5) {$C^{2r-4}_{2,3}$}

node at ( 5,19.5) {$\hdots$}
node at ( 5,18.5) {$\hdots$}
node at ( 5,17.5) {$\hdots$}
node at ( 5,16.5) {$\hdots$}
node at ( 5,15.5) {$\hdots$}
node at ( 5,14.5) {$\hdots$}
node at ( 5,13.5) {$\hdots$}
node at ( 5,12.5) {$\hdots$}
node at ( 5,11.5) {$\hdots$}
node at ( 5,10.5) {$\hdots$}
node at ( 5,9.5) {$\hdots$}
node at ( 5,8.5) {$\hdots$}
node at ( 5,7.5) {$\hdots$}
node at ( 5,6.5) {$\hdots$}
node at ( 5,5.5) {$\hdots$}
node at ( 5,4.5) {$\hdots$}

node at ( 6,21) {$C^{r+1}_{0,0}$}
node at ( 6,19) {$C^{r+1}_{1,1}$}
node at ( 6,17) {$C^{r+1}_{1,2}$}
node at ( 6,15) {$C^{r+1}_{1,1}$}
node at ( 6,13) {$C^{r+1}_{1,2}$}
node at ( 6,11) {$C^{r+1}_{1,2}$}
node at ( 6,9) {$C^{r+1}_{2,3}$}
node at ( 6,7) {$\vdots$}
node at ( 6,5) {$C^{r+1}_{r-3,r-3}$}
node at ( 6,3) {$C^{r+1}_{r-3,r-2}$}

node at ( 8,21.5) {$C^{r}_{0,0}p^r_0$}
node at ( 8,20.5) {$C^{r}_{0,1}p^r_1$}
node at ( 8,19.5) {$C^{r}_{1,1}p^r_1$}
node at ( 8,18.5) {$C^{r}_{1,2}p^r_2$}
node at ( 8,17.5) {$C^{r}_{2,2}p^r_2$}
node at ( 8,16.5) {$C^{r}_{2,3}p^r_3$}
node at ( 8,15.5) {$C^{r}_{1,1}p^r_1$}
node at ( 8,14.5) {$C^{r}_{1,2}p^r_2$}
node at ( 8,13.5) {$C^{r}_{2,2}p^r_2$}
node at ( 8,12.5) {$C^{r}_{2,3}p^r_3$}
node at ( 8,11.5) {$C^{r}_{2,2}p^r_2$}
node at ( 8,10.5) {$C^{r}_{2,3}p^r_3$}
node at ( 8,9.5) {$C^{r}_{3,3}p^r_3$}
node at ( 8,8.5) {$C^{r}_{3,4}p^r_4$}
node at ( 8,7)   {$\vdots$}
node at ( 8,5.5) {$C^{r}_{r-3,r-3}p^r_{r-3}$}
node at ( 8,4.5) {$C^{r}_{r-3,r-2}p^r_{r-2}$}
node at ( 8,3.5) {$C^{r}_{r-2,r-2}p^r_{r-2}$}
node at ( 8,2.5) {$C^{r}_{r-2,r-1}p^r_{r-1}$};
\end{tikzpicture}
\end{center}
\caption{Diagram showing the structure of the optimal weights needed to obtain optimal order of accuracy, \cite{ARSY20}.}
\label{figuraesquema}
\end{figure}
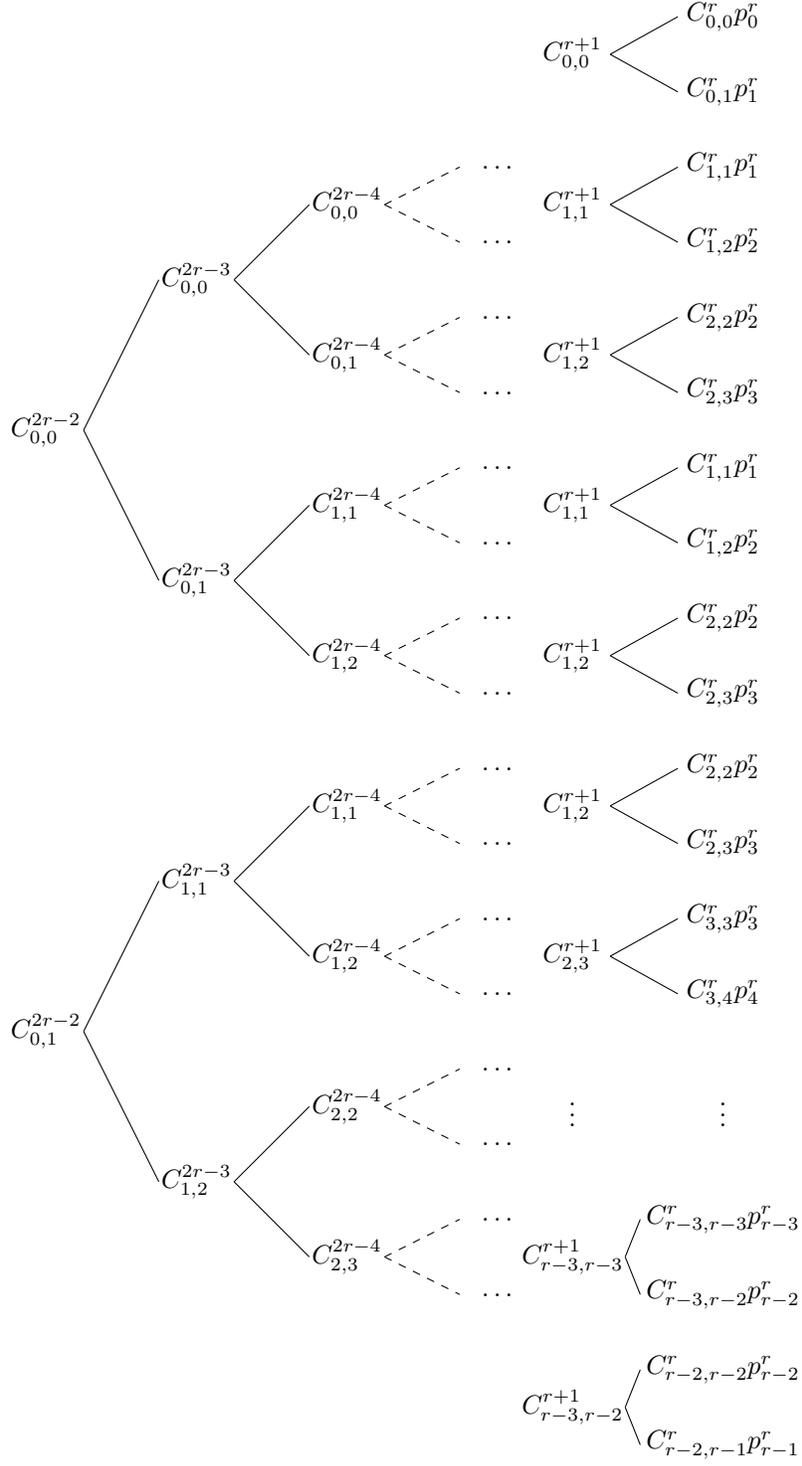
Thus, we obtain the general explicit expression:
 \begin{equation}\label{eq1}
\begin{split}
p^{2r-1}_0(x^*)&=\sum_{j_0=0}^1C^{2r-2}_{0,j_0}(x^*)p_{j_0}^{2r-2}(x^*) \\
&=\sum_{j_0=0}^1C^{2r-2}_{0,j_0}(x^*)\left(\sum_{j_1=j_0}^{j_0+1} C^{2r-3}_{j_0,j_1}(x^*) p_{j_1}^{2r-3}(x^*)\right)\\
&=\sum_{j_0=0}^1C^{2r-2}_{0,j_0}(x^*)\left(\sum_{j_1=j_0}^{j_0+1} C^{2r-3}_{j_0,j_1}(x^*) \left(\sum_{j_2=j_1}^{j_1+1} C^{2r-4}_{j_1,j_2}(x^*)p_{j_2}^{2r-4}(x^*)\right)\right)\\
&=\sum_{j_0=0}^1 C^{2r-2}_{0,j_0}(x^*)\left(\sum_{j_1=j_0}^{j_0+1}C_{j_0,j_1}^{2r-3}(x^*)
\left(\dots\left(\sum_{j_{r-3}=j_{r-4}}^{j_{r-4}+1} C^{r+1}_{j_{r-4},j_{r-3}}(x^*)\left(\sum_{j_{r-2}=j_{r-3}}^{j_{r-3}+1} C^{r}_{j_{r-3},j_{r-2}}(x^*)p^r_{j_{r-2}}(x^*)\right)\right)\dots\right)\right).
\end{split}
\end{equation}

We calculate these values through the following lemma (the version for uniform grids, and when the interpolation is at mid-points is proved in \cite{ARSY20}).

\begin{lemma}\label{lemaauxiliar1general1d}
Let $r\leq l \leq 2r-2$ and $0\leq j\leq (2r-2)-l$, $x^*\in(x_{i-1},x_i)$, then
\begin{equation}
p^{l+1}_j(x^*)=C^{l}_{j,j}(x^*)p_j^{l}(x^*)+C^{l}_{j,j+1}(x^*)p_{j+1}^{l}(x^*),
\end{equation}
where
\begin{equation}\label{pesostodosgeneral1d}
C^l_{j,j}(x^*)=\frac{x^*-x_{i-r+j+l+1}}{x_{i-r+j}-x_{i-r+j+l+1}}, \quad C^l_{j,j+1}(x^*)=1-C^l_{j,j}(x^*)=\frac{x_{i-r+j}-x^*}{x_{i-r+j}-x_{i-r+j+l+1}}.
\end{equation}
\end{lemma}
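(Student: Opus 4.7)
The plan is to recognize this identity as the classical Aitken--Neville recurrence, rephrased in the notation of the paper, and to prove it by invoking uniqueness of the polynomial interpolant. Concretely, $p^{l+1}_j$ is the unique polynomial of degree at most $l+1$ that interpolates $f$ at the $l+2$ nodes of the large stencil $\{x_{i-r+j},\dots,x_{i-r+j+l+1}\}$, while $p^l_j$ and $p^l_{j+1}$ are the unique interpolants of degree at most $l$ on the nested sub-stencils $\{x_{i-r+j},\dots,x_{i-r+j+l}\}$ and $\{x_{i-r+j+1},\dots,x_{i-r+j+l+1}\}$ respectively.

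First I would define the candidate
\[
q(x) := C^l_{j,j}(x)\,p^l_j(x) + C^l_{j,j+1}(x)\,p^l_{j+1}(x),
\]
and note that, since the weights in \eqref{pesostodosgeneral1d} are affine in $x$ while $p^l_j$ and $p^l_{j+1}$ have degree at most $l$, the polynomial $q$ has degree at most $l+1$. A one-line computation from \eqref{pesostodosgeneral1d} establishes the partition-of-unity identity $C^l_{j,j}(x)+C^l_{j,j+1}(x)=1$ for every $x$, which I will invoke at the interior nodes.

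Next I would verify that $q$ agrees with $f$ at every node of the large stencil. At the left endpoint $x=x_{i-r+j}$, evaluation of \eqref{pesostodosgeneral1d} yields $C^l_{j,j}(x_{i-r+j})=1$ and $C^l_{j,j+1}(x_{i-r+j})=0$, so $q(x_{i-r+j})=p^l_j(x_{i-r+j})=f(x_{i-r+j})$; at the right endpoint $x=x_{i-r+j+l+1}$ the weights swap and the analogous computation gives $q(x_{i-r+j+l+1})=f(x_{i-r+j+l+1})$. At each interior node $x_{i-r+j+m}$ with $1\le m\le l$, the point belongs to both sub-stencils, so $p^l_j(x_{i-r+j+m})=p^l_{j+1}(x_{i-r+j+m})=f(x_{i-r+j+m})$, and the partition-of-unity then yields $q(x_{i-r+j+m})=f(x_{i-r+j+m})$. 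Since $q$ has degree at most $l+1$ and interpolates $f$ at $l+2$ distinct nodes, uniqueness of the polynomial interpolant forces $q\equiv p^{l+1}_j$, giving the claimed identity.

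The argument is essentially routine once the right candidate is written down; the only mild obstacle is the book-keeping with the nested subscripts, namely verifying that the two sub-stencils together cover exactly the large stencil and overlap on precisely the $l$ interior nodes. I would also remark that the hypothesis $x^*\in(x_{i-1},x_i)$ is not used to establish the identity itself, which is valid as a polynomial identity in $x$; that range is rather the natural evaluation window for the recursive procedure in \eqref{eq1} and ensures positivity of the weights, a feature needed for the subsequent WENO construction.
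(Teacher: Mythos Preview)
Your proof is correct and follows essentially the same approach as the paper: both recognize the identity as the Aitken--Neville recurrence once the stencils for $p^{l+1}_j$, $p^l_j$, and $p^l_{j+1}$ are identified as $\{x_{i-r+j},\dots,x_{i-r+j+l+1}\}$, $\{x_{i-r+j},\dots,x_{i-r+j+l}\}$, and $\{x_{i-r+j+1},\dots,x_{i-r+j+l+1}\}$. The paper simply names these stencils and declares the result direct, whereas you spell out the uniqueness argument in full; your additional remark that the hypothesis $x^*\in(x_{i-1},x_i)$ is not needed for the polynomial identity itself is accurate and worth keeping.
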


\begin{proof}
The proof is direct  {by} taking into account that the stencil for constructing $p^{l+1}_j$ is $\{x_{i-r+j},\hdots,x_{i-r+j+l+1}\}$ and for $p^{l}_j$ and $p^{l}_{j+1}$ are
$\{x_{i-r+j},\hdots,x_{i-r+j+l}\}$ and $\{x_{i-r+j+1},\hdots,x_{i-r+j+l+1}\}$ respectively.
\end{proof}

As Corollary, we obtain the values showed in \cite{ARSY20} for mid-point interpolation.

\begin{corollary}\label{lemaauxiliar1}
Let $r\leq l \leq 2r-2$ and $0\leq k\leq (2r-2)-l$, if we denote as $C^l_{k,k}$ and $C^{l}_{k,k+1}$ the values which satisfy:
\begin{equation}
p^{l+1}_k(x_{i-1/2})=C^{l}_{k,k}p_k^{l}(x_{i-1/2})+C^{l}_{k,k+1}p_{k+1}^{l}(x_{i-1/2}),
\end{equation}
then
\begin{equation}\label{pesostodos}
C^l_{k,k}=\frac{2(l-r+k+1)+1}{2(l+1)}, \quad C^l_{k,k+1}=1-C^l_{k,k}=\frac{2(r-k)-1}{2(l+1)}.
\end{equation}
\end{corollary}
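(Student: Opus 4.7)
The statement is a direct specialization of Lemma \ref{lemaauxiliar1general1d}, so the plan is simply to substitute the uniform spacing $x_{i-r+j}=x_i+(-r+j)h$ and the mid-point $x^{*}=x_{i-1/2}=x_i-h/2$ into the two rational expressions \eqref{pesostodosgeneral1d} and simplify. I will apply the lemma with $j=k$, since the corollary's role is just to convert the general node-difference quotients into the concrete half-integer weights used in the uniform mid-point setting.

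For $C^{l}_{k,k}$ I would compute the numerator
$x^{*}-x_{i-r+k+l+1}=-h/2-(-r+k+l+1)h=h\bigl(r-k-l-\tfrac{3}{2}\bigr)$
and the denominator
$x_{i-r+k}-x_{i-r+k+l+1}=-(l+1)h$,
whose quotient rearranges to $(2k+2l-2r+3)/(2(l+1))=(2(l-r+k+1)+1)/(2(l+1))$, matching the first formula in \eqref{pesostodos}. For $C^{l}_{k,k+1}$ I would compute the numerator
$x_{i-r+k}-x^{*}=(-r+k)h+h/2=h(2k-2r+1)/2$,
reuse the same denominator $-(l+1)h$, and obtain $(2(r-k)-1)/(2(l+1))$. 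The identity $C^{l}_{k,k}+C^{l}_{k,k+1}=1$, already known from Lemma \ref{lemaauxiliar1general1d}, serves as a convenient sanity check and can be verified in one line by adding $(2l-2r+2k+3)$ and $(2r-2k-1)$ to get $2(l+1)$.

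There is no substantial obstacle: the entire argument is a substitution and simplification. The only point requiring care is the index bookkeeping, in particular keeping track of the shift between the node label $i-r+k$ appearing in Lemma \ref{lemaauxiliar1general1d} and the quantities $l-r+k+1$ and $r-k$ that appear in the final formulas; once the mid-point substitution is performed, the factors of $h$ cancel and the half-integer offsets telescope cleanly into the stated closed forms.
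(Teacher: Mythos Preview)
Your proposal is correct and follows exactly the approach implicit in the paper, which presents this result as an immediate corollary of Lemma~\ref{lemaauxiliar1general1d} without writing out the substitution. Your explicit computation of the two quotients under the uniform spacing $x_{i-r+j}=x_i+(j-r)h$ and $x^*=x_i-h/2$ is precisely the one-line specialization the paper intends.
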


From Eq. \eqref{eq1}, we can deduce the following recursive method:
\begin{equation}\label{formulapep}
\begin{cases}
  \tilde{p}^{r}_j(x^*)=p^{r}_{j}(x^*), & j=0,\hdots,r-1,\\[0.3cm]
  \tilde{p}^{l+1}_j(x^*)=\tilde{\omega}^{l}_{j,j}(x^*)\tilde{p}_j^{l}(x^*)+\tilde{\omega}^{l}_{j,j+1}(x^*)\tilde{p}_{j+1}^{l}(x^*),&
    l=r,\dots,2r-2, \, \, j=0,\dots,2r-2-l,\\
\end{cases}
\end{equation}
with the non-linear weights defined as:
\begin{equation}\label{pesosr}
\tilde{\omega}^l_{j,j_1}(x^*)=\frac{\tilde{\alpha}_{j,j_1}^l(x^*)}{\tilde{\alpha}_{j,j}^l(x^*)+\tilde{\alpha}_{j,j+1}^l(x^*)},\quad \tilde{\alpha}_{j,j_1}^l(x^*)=\frac{C_{j,j_1}^l(x^*)}{(\epsilon+ I^l_{j,j_1})^t}, \quad j_1=j,\,\, j+1,
\end{equation}
where $C_{j,j_1}^r(x^*)$, $j_1=j,j+1$ are determined in Eq. \eqref{pesostodosgeneral1d}.
The values $I^l_{j,j_1}$, explained in subsection \ref{smoothnessnuevo}, are smoothness indicators.

With this formulation, the approximation is defined by
\begin{equation}\label{eqpepfinal}
\tilde{\mathcal{I}}^{2r-1}(x^*;f)= \tilde{p}_{0}^{2r-1}(x^*),
\end{equation}
being $\tilde{p}_{0}^{2r-1}(x^*)$ the result of the recursive process, Eq. \eqref{formulapep}.

\subsection{On smoothness indicators and the accuracy of the new univariate progressive WENO method}\label{smoothnessnuevo}
The choice of the smoothness indicators is crucial. To design the new non-linear algorithm, we only calculate the smoothness indicators at level $l=r$, and
we use them in all the levels, as we can see in this subsection. In \cite{ABM} it is proved that if the smoothness indicators satisfy the following conditions:
\begin{enumerate}[label={\bfseries P\arabic*}]
\item\label{P1sm1d} The order of a smoothness indicator that is free of discontinuities is $h^2$, i.e.
$$I^r_{k}=O(h^2) \,\, \text{if}\,\, f \,\, \text{is smooth in } \,\, \mathcal{S}^r_{k}.$$
\item\label{P2sm1d} The distance between two smoothness indicators
  free of discontinuities is $h^{r+1}$, i.e. let
  $k,k'\in\{0,1,\hdots,r-1\}$ be such that
there does not exist any discontinuity in $\mathcal{S}^r_{k}$ and $\mathcal{S}^r_{k'}$, then
$$I^r_{k}-I^r_{k'}=O(h^{r+1}).$$
\item\label{P3sm1d} When a discontinuity crosses the stencil $\mathcal{S}^r_{k}$ then
$$I^r_{k} \nrightarrow 0 \,\, \text{as}\,\, h\to 0.$$
\end{enumerate}
Then, the optimal weights satisfy Eq. \eqref{omega} for determined parameters $t$ and $\epsilon$ (\dioni{}{Proposition} 2 in \cite{ABM}). We exploit this result to construct the smoothness indicators in each step of the Aitken-Neville's algorithm. We will use the following definition (\dioni{}{Definition} 4.1 in \cite{ARSY20}).
\begin{definition}\label{def1}
Let $l=r,\dots, 2r-2$, and $I^r_k$, with $k=0,\hdots,r-1$, be the smoothness indicators with properties \ref{P1sm1d},\ref{P2sm1d} and \ref{P3sm1d}. Then, we define the smoothness indicators at level $l$ as:
\begin{equation}\label{equationindicadores}
\begin{split}
& I_{k,k}^{l}= I_k^r, \quad k=0,\dots,(2r-2)-l,\\
& I_{k,k+1}^{l}= I_{l-(r-1)+k}^r, \quad k=0,\dots,(2r-2)-l.
\end{split}
\end{equation}
\end{definition}
Therefore, in each step, we will remove the part which is ``contaminated'' by a discontinuity. In particular, we can prove the following proposition adapting the proof of \dioni{}{Proposition} 4.4 in \cite{ARSY20} to any point $x^*\in(x_{i-1},x_i)$.
\begin{proposition}\label{prop1}
Let $r\leq l\leq 2r-2$, $0\leq k\leq (2r-2)-l$,
$x^*\in(x_{i-1},x_i)$, and let $I^l_{k,k}$, and $I^l_{k,k+1}$ be
smoothness indicators defined in \dioni{}{Definition} \ref{def1}. The following weights:
\begin{equation}\label{w_teo1}
\begin{aligned}
\tilde{\omega}^l_{k,k}(x^*)=\frac{ \tilde{\alpha}_{k,k}^l(x^*)}{ \tilde{\alpha}_{k,k}^l(x^*)+ \tilde{\alpha}_{k,k+1}^l(x^*)},\quad
\tilde{\omega}^l_{k,k+1}(x^*)=\frac{ \tilde{\alpha}_{k,k+1}^l(x^*)}{ \tilde{\alpha}_{k,k}^l(x^*)+ \tilde{\alpha}_{k,k+1}^l(x^*)},
\end{aligned}
\end{equation}
with,
\begin{equation}\label{alpha_teo1}
\begin{aligned}
\tilde{\alpha}_{k,k}^l(x^*)=\frac{C_{k,k}^l(x^*)}{(\epsilon+ {I}_{k,k}^l)^t},\quad
\tilde{\alpha}_{k,k+1}^l(x^*)=\frac{C_{k,k+1}^l(x^*)}{(\epsilon+ {I}_{k,k+1}^l)^t},
\end{aligned}
\end{equation}
and with $C^l_{k,k}(x^*)+C^l_{k,k+1}(x^*)=1$, will fall under one of the following cases:
\begin{enumerate}
\item If neither $ {I}^l_{k,k}$ nor $  I^l_{k,k+1}$ are affected by a discontinuity, then $ \tilde{\omega}_{k,k}^l(x^*)=C^l_{k,k}(x^*)+O(h^{r-1})$ and $ {\omega}_{k,k+1}^l(x^*)=C^l_{k,k+1}(x^*)+O(h^{r-1})$.
\item If ${I}^l_{k,k+1}$ is affected by a singularity, then $ \tilde{\omega}_{k,k}^l(x^*)=1+O(h^{2t})$ and $ \tilde{\omega}_{k,k+1}^l(x^*)=O(h^{2t})$.
\item If ${I}^l_{k,k}$ is affected by a singularity then $ \tilde{\omega}_{k,k+1}^l(x^*)=1+O(h^{2t})$ and $ \tilde{\omega}_{k,k}^l(x^*)=O(h^{2t})$.
\item If ${I}^l_{k,k}$ and ${I}^l_{k,k+1}$ are affected by a
  singularity then $ \tilde{\omega}_{k,k}^l(x^*)\nrightarrow 0$ and $
  \tilde{\omega}_{k,k+1}^l(x^*)\nrightarrow 0$.
\end{enumerate}
\end{proposition}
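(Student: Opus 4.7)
The plan is to adapt the argument of Proposition 4.4 in \cite{ARSY20} to the present non-uniform and general-$x^*$ setting, exploiting the fact that through Definition \ref{def1} each indicator $I^l_{k,k}$ and $I^l_{k,k+1}$ coincides with one of the base indicators $I^r_m$ at level $r$, so that properties \ref{P1sm1d}, \ref{P2sm1d}, \ref{P3sm1d} transfer verbatim to level $l$. The only genuinely new ingredient with respect to \cite{ARSY20} is that the optimal weights $C^l_{k,k}(x^*), C^l_{k,k+1}(x^*)$ now depend on an arbitrary $x^*\in(x_{i-1},x_i)$ on a non-uniform grid. From the closed forms in Lemma \ref{lemaauxiliar1general1d}, both are ratios of local distances; under a mild bounded-aspect-ratio assumption on the mesh they belong to a compact sub-interval of $(0,1)$, uniformly in $x^*$ and in $h$. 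This uniform control is the main technical point I expect to have to justify carefully; once it is in place, the four cases reduce to standard asymptotic comparisons of the same flavour as in the classical WENO analysis.

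For Case 1, using $C^l_{k,k}+C^l_{k,k+1}=1$ I would first derive the identity
\[\tilde{\omega}^l_{k,k}(x^*)-C^l_{k,k}(x^*)=\frac{C^l_{k,k}(x^*)\,C^l_{k,k+1}(x^*)\,\bigl[(\epsilon+I^l_{k,k})^{-t}-(\epsilon+I^l_{k,k+1})^{-t}\bigr]}{\tilde{\alpha}^l_{k,k}(x^*)+\tilde{\alpha}^l_{k,k+1}(x^*)}.\]
Since $\epsilon=h^2$, property \ref{P1sm1d} yields $\epsilon+I^l_{k,k}=\Theta(h^2)$ and likewise for $I^l_{k,k+1}$; a first-order Taylor expansion together with \ref{P2sm1d} bounds the numerator bracket by $O(h^{r+1})\cdot O(h^{-2t-2})=O(h^{r-1-2t})$, while the denominator is $\Theta(h^{-2t})$. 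The resulting $O(h^{r-1})$ bound gives the stated estimate for $\tilde{\omega}^l_{k,k}$, and the companion estimate follows from $\tilde{\omega}^l_{k,k}+\tilde{\omega}^l_{k,k+1}=1$.

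Cases 2 and 3 are symmetric. Suppose $I^l_{k,k+1}$ is contaminated: by \ref{P3sm1d} it remains bounded away from zero, while by \ref{P1sm1d} $I^l_{k,k}=O(h^2)$, so $\tilde{\alpha}^l_{k,k+1}(x^*)=\Theta(1)$ whereas $\tilde{\alpha}^l_{k,k}(x^*)=\Theta(h^{-2t})$. Dividing gives $\tilde{\omega}^l_{k,k+1}(x^*)=O(h^{2t})$, hence $\tilde{\omega}^l_{k,k}(x^*)=1-O(h^{2t})$; Case 3 is identical after exchanging the two sub-indices. Case 4 is immediate: by \ref{P3sm1d} both indicators stay $\Theta(1)$, so both $\tilde{\alpha}$'s are $\Theta(1)$, and neither weight can tend to $0$ as $h\to 0$.

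The principal obstacle is not in the asymptotic bookkeeping, which closely mirrors the mid-point uniform-grid analysis of \cite{ARSY20}, but in establishing that the constants hidden in the various $O(\cdot)$ symbols can be taken independent of $x^*\in(x_{i-1},x_i)$ and of the particular non-uniform partition. This is precisely what the uniform two-sided bound on $C^l_{k,k}(x^*),C^l_{k,k+1}(x^*)$ provided by Lemma \ref{lemaauxiliar1general1d} guarantees.
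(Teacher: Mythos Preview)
Your proposal is correct and aligns with the paper's approach: the paper does not supply an explicit proof of this proposition, but only remarks that it follows by ``adapting the proof of Proposition 4.4 in \cite{ARSY20} to any point $x^*\in(x_{i-1},x_i)$'', which is precisely the route you take. Your derivation of the algebraic identity in Case~1, the Taylor/mean-value estimate yielding $O(h^{r-1})$, and the standard comparisons in Cases~2--4 are all sound and constitute the natural adaptation the authors have in mind.
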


%

We have the following main result:
\begin{theorem}\label{teo1}
Let $1< l_0 \leq r-1$, $x^*\in(x_{i-1},x_i)$, and $\tilde{\mathcal{I}}^{2r-1}(x^*;f)$ the result of the recursive process, Eq. \eqref{formulapep},  if $f$ is smooth in $[x_{i-r},x_{i+r-1}]\setminus \Omega$ and $f$ has a discontinuity at $\Omega$ then
\begin{equation}
\tilde{\mathcal{I}}^{2r-1}(x^*;f)-{f}(x^*)=\left\{
                                                  \begin{array}{ll}
                                                    O(h^{2r}), & \hbox{if $\,\,\Omega=\emptyset$;} \\
O(h^{r+l_0}), & \hbox{if   $\,\,\Omega=[x_{i+l_0-1},x_{i+l_0}]$. }\\
                                                  \end{array}
                                                \right.
\end{equation}
\end{theorem}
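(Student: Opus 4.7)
The plan is to split the argument into the smooth case and the one-discontinuity case, and in each case to propagate errors through the Aitken--Neville recursion \eqref{formulapep} level by level, using Proposition \ref{prop1} to control the deviation of the non-linear weights $\tilde{\omega}^l_{j,j_1}$ from the optimal weights $C^l_{j,j_1}(x^*)$ supplied by Lemma \ref{lemaauxiliar1general1d}.

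For the smooth case ($\Omega=\emptyset$) I would first note that Case 1 of Proposition \ref{prop1} applies uniformly at every level, giving $\tilde{\omega}^l_{j,j_1}(x^*)=C^l_{j,j_1}(x^*)+O(h^{r-1})$. Rewriting
\begin{equation*}
\tilde{p}^{l+1}_j-p^{l+1}_j = (\tilde{\omega}^l_{j,j}-C^l_{j,j})(p^l_j-p^l_{j+1}) + \tilde{\omega}^l_{j,j}(\tilde{p}^l_j-p^l_j) + \tilde{\omega}^l_{j,j+1}(\tilde{p}^l_{j+1}-p^l_{j+1}),
\end{equation*}
using that $p^l_j(x^*)-p^l_{j+1}(x^*)=O(h^{l+1})$ on smooth data, and starting from $\tilde{p}^r_j=p^r_j$, a routine induction on $l$ yields $\tilde{p}^{l+1}_j(x^*)-p^{l+1}_j(x^*)=O(h^{2r})$ at every step. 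Applying this at the top of the tree and combining with the classical interpolation estimate $p^{2r-1}_0(x^*)-f(x^*)=O(h^{2r})$ closes the smooth case.

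For the discontinuity case I would track the contamination pattern through the recursion. By Definition \ref{def1} the indicators used to compute $\tilde{p}^{l+1}_j$ are $I^l_{j,j}=I^r_j$ and $I^l_{j,j+1}=I^r_{l-r+1+j}$, and $I^r_k$ is contaminated iff $k\ge l_0$. I would then prove by induction on $s=0,1,\dots,l_0-1$ that at level $L=r+s$ the polynomials $\tilde{p}^L_j$ for $0\le j\le l_0-s-1$ are \emph{clean}, i.e.\ $\tilde{p}^L_j(x^*)=p^L_j(x^*)+O(h^{2r})$; each inductive step reduces exactly to the smooth-case estimate above, since for such $j$ both entering indicators are clean and Case 1 of Proposition \ref{prop1} applies. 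The clean window shrinks by one per level and collapses to $j=0$ precisely at level $L_\star:=r+l_0-1$, giving $\tilde{p}^{L_\star}_0(x^*)=p^{L_\star}_0(x^*)+O(h^{2r})$. Since the stencil of $p^{L_\star}_0$ is $\{x_{i-r},\dots,x_{i+l_0-1}\}$, which sits entirely on the smooth side of $\Omega$, and $x^*\in(x_{i-1},x_i)$ is interior to it, classical interpolation gives $p^{L_\star}_0(x^*)-f(x^*)=O(h^{r+l_0})$.

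From level $L_\star$ upward, the left indicator $I^L_{0,0}=I^r_0$ is still clean while the right indicator $I^L_{0,1}=I^r_{L-r+1}$ becomes contaminated (since $L-r+1\ge l_0$), so Case 2 of Proposition \ref{prop1} applies and gives $\tilde{\omega}^L_{0,0}=1+O(h^{2t})$, $\tilde{\omega}^L_{0,1}=O(h^{2t})$. Because the contaminated $\tilde{p}^L_1$ is uniformly bounded in $h$, this forces $\tilde{p}^{L+1}_0=\tilde{p}^L_0+O(h^{2t})$; iterating the bounded number of steps from $L=L_\star$ up to $L=2r-2$ and using the previous paragraph gives $\tilde{\mathcal{I}}^{2r-1}(x^*;f)=p^{L_\star}_0(x^*)+O(h^{2r})+O(h^{2t})=f(x^*)+O(h^{r+l_0})$, provided the user-chosen parameter $t$ in \eqref{pesosr} satisfies $2t\ge r+l_0$. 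The hard part will be making the cleanliness-propagation induction rigorous: I must check that the convex-combination structure of \eqref{pesosr} together with the positivity of $C^l_{j,j_1}(x^*)$ for $x^*\in(x_{i-1},x_i)$ (which follows from Lemma \ref{lemaauxiliar1general1d}) keeps all non-linear weights uniformly bounded in $[0,1]$, so that constants hidden in the $O(\cdot)$'s do not degrade across the $r-l_0$ contamination-suppression steps, and that no hidden loss in Case 2 of Proposition \ref{prop1} lowers the effective order below $O(h^{r+l_0})$.
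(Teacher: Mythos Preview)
The paper does not include an explicit proof of this theorem; it states the result and the symmetry remark, implicitly deferring to the argument in \cite{ARSY20}. Your proposal is correct and is precisely the level-by-level propagation argument that the paper's scaffolding (Lemma~\ref{lemaauxiliar1general1d}, Definition~\ref{def1}, Proposition~\ref{prop1}, and the recursion \eqref{formulapep}) is designed to support, including the correct identification of the constraint $2t\ge r+l_0$ needed for the contaminated branch to be suppressed to the desired order.
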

By symmetry, we only analyze when there exists an isolated discontinuity at an interval $[x_{i-1+l_0},x_{i+l_0}]$, $l_0=1,\hdots, r-1$ (analogously, we obtain the equivalent symmetric results for $[x_{i-r+l_0},x_{i-r+l_0+1}]$, $l_0=0,\hdots,r-2$).

In the next sections, we generalize this method to \dioni{}{multi-dimensions}. We also introduce some possible smoothness indicators which satisfy \ref{P1sm1d},\ref{P2sm1d}, and \ref{P3sm1d} in Section \ref{smoothaccur}.

\section{\dioni{}{Comparison of multivariate linear Lagrange interpolation with the non-linear WENO method in Cartesian grids}}\label{sec:multivariatelinearandwenoclassic}
In this section, we briefly review the Lagrange interpolation problem
in several variables when the data are located in Cartesian grids
(non-necessarily equally-spaced) and we construct a multivariate WENO
method following the ideas presented in \cite{arandigamuletrenau} for
two variables. We present the necessary ingredients to extend the
progressive WENO-$2r$ algorithm introduced in Section
\ref{wenoprogresivo1d} to several variables.

\subsection{Multivariate linear interpolation}\label{linealnd}
Let us start supposing $a_j,b_j\in\mathbb{R}$ with $a_j<b_j, \,j=1,\hdots,n$, and an hypercube denoted as:
$$\prod_{j=1}^n [a_j,b_j]=[a_1,b_1]\times \hdots \times [a_n,b_n].$$
We call the points of a grid for each interval as:
$$a_j=\vv{x}{j}{0}<\vv{x}{j}{1}<\vv{x}{j}{2}<\hdots< \vv{x}{j}{J_j}=b_j, \quad j=1,\hdots,n,$$
and define
$$h_j=\max_{i=1,\hdots,J_j}{|\vv{x}{j}{i}-\vv{x}{j}{i-1}|}, \quad h:=\max_{j=1,\hdots,n} h_j.$$
We suppose an unknown function $f:\prod_{j=1}^n [a_j,b_j] \to \mathbb{R}$, and consider our data as the evaluation of this function in the points of the Cartesian grid, i.e.
$$f_{(l_1,\hdots,l_n)}=f(\vv{x}{1}{l_1},\vv{x}{2}{l_2},\hdots,\vv{x}{n}{l_n}), \quad 0\leq l_j\leq J_j,\quad 1\leq j\leq n.$$
Let the polynomials of $n$ variables be denoted by:
$$\Pi^{\tau_1,\hdots,\tau_n}_n=\{p(x_1,\hdots,x_n)=\sum_{l_1=0}^{\tau_1}
\hdots \sum_{l_n=0}^{\tau_n}
a_{(l_1,\hdots,l_n)}x_1^{l_1}x_2^{l_2}\hdots x_n^{l_n}|\,\,a_{(l_1,\hdots,l_n)}\in\mathbb{R},\,\, 0\leq l_j\leq \tau_j,\,\,j=1,\hdots,n\},$$
when $\tau_1=\hdots=\tau_n=\tau$ we call it $\Pi^{\tau}_n:=\Pi^{\tau_1,\hdots,\tau_n}_n$.
We consider $(i_1,\hdots,i_n)\in\mathbb{N}^n$ to be the reference
index where the approximation is centered at, and $r\in\mathbb{N}$ such that $0\leq i_j-r \leq i_j+r-1\leq J_j$, $j=1,\hdots,n$, and the centered stencil:
\begin{equation}\label{eq1stencil}
\begin{split}
\mathcal{S}_{\mathbf{0}}^{2r}&=\{\vv{x}{1}{i_1-r},\hdots,\vv{x}{1}{i_1+r-1}\}\times\{\vv{x}{2}{i_2-r},\hdots,\vv{x}{2}{i_2+r-1}\}\times \hdots \times \{\vv{x}{n}{i_n-r},\hdots,\vv{x}{n}{i_n+r-1}\}\\
&=:(\mathcal{S}_{0}^{2r})_1\times\hdots\times (\mathcal{S}_{0}^{2r})_n=\prod_{j=1}^n (\mathcal{S}_{0}^{2r})_j,
\end{split}
\end{equation}
then, the problem consists in calculating a polynomial of degree $2r-1$ such that:
$$p(\xx)=f(\xx), \quad \, \forall\, \xx \in \mathcal{S}_{(0,\hdots,0)}^{2r}.$$
To do so, we will use the Lagrange base of polynomials:
$$L^{j}_{p}(x_j)=\prod_{k=i_j-r,k\neq p}^{i_j+r-1}\left(\frac{x_j-\vv{x}{j}{k}}{\vv{x}{j}{p}-\vv{x}{j}{k}}\right), \quad p=i_j-r,\hdots,i_j+r-1, \quad j=1,\hdots,n,$$
then, the unique polynomial is
$$p_{\mathbf{0}}^{2r-1}(x_1,\hdots,x_n)=\sum_{l_1=i_1-r}^{i_1+r-1}\sum_{l_2=i_2-r}^{i_2+r-1}\hdots\sum_{l_n=i_n-r}^{i_n+r-1}f(\vv{x}{1}{l_1},\vv{x}{2}{l_2},\hdots,\vv{x}{n}{l_n})L^{1}_{l_1}(x_1)\hdots L^{n}_{l_n}(x_n).$$
It can be checked (see \cite{arandigamuletrenau}) that if $\xe=(x_1^*,\hdots,x_n^*)\in \prod_{j=1}^n [\vv{x}{j}{i_j-1},\vv{x}{j}{i_j}]$, and $f\in \mathcal{C}^{2nr}(\mathbb{R})$ then the error satisfies:
\begin{equation}\label{errorfuncion}
E(\xe)=f(\xe)-p_{\mathbf{0}}^{2r-1}(\xe)= O(h^{2r}),
\end{equation}
and if $\mathbf{m}=(m_1,\hdots,m_n)\in \mathbb{N}^n$ with $m_j \leq 2r-1$, $j=1,\hdots,n$, we get
\begin{equation}\label{errorderivadas}
E^{\dioni{}{(\mathbf{m})}}(\xe)=E^{(m_1,\hdots,m_n)}(\xe)= O(h^{2r-\max\{m_1,\hdots,m_n\}}),
\end{equation}
where
$$E^{(m_1,\hdots,m_n)}(\xx)=\frac{\partial^{m_1+\hdots+m_n} E}{\partial^{m_1} x_1\hdots\partial^{m_n} x_n}(\xx).$$

\subsection{Multivariate WENO interpolation in Cartesian grids}\label{wenoclasicond}

Using the same notation as in the previous section, the goal is to construct a non-linear interpolant in \dioni{}{multi-dimensions} in the same way as we reviewed in $1d$, i.e., an interpolant with maximum order $2r$ in the smooth parts and with order $r+1$ when there exists a discontinuity which crosses, at least, one small stencil.

In this case, we introduce the method supposing that we want to obtain
an approximation of the  function $f$ evaluated at any point of the
hypercube $\xe \in \prod_{j=1}^n
[\vv{x}{j}{i_j-1},\vv{x}{j}{i_j}]$. The construction is similar to the
$1d$ case:
Firstly, we make the linear combination of interpolants of lower degree, $r$, if $\mathbf{k}=(k_1,\hdots,k_n)$ then
$$p_0^{2r-1}(\xe)=\sum_{\mathbf{k}\in \{0,1,\hdots,r-1\}^n}C^r_{\mathbf{k}}(\xe)p_{\mathbf{k}}^r(\xe),$$
where $C^r_{\mathbf{k}}(\xe)$ are the optimal weights, and it is not difficult to prove that they have the explicit form:
$$C^r_{\mathbf{k}}(\xe)=\prod_{j=1}^n C^r_{k_j}(x^*_j), \quad \mathbf{k}\in \{0,\hdots,r-1\}^n,$$
being $C^r_{k_j}(x^*_j)$, $k_j=0,\hdots,r-1$, $j=1,\hdots,n$ the
optimal weights in $1d$. In the case that we want to approximate at
the mid-point of the hypercube, i.e.  $\xe=\x12e$, then they are defined in Eq. \eqref{opt_w}. The polynomials $p_{\mathbf{k}}^r$ interpolate at the nodes
\begin{equation*}
\begin{split}
\mathcal{S}_{\mathbf{k}}^{r}&=\{\vv{x}{1}{i_1+k_1-r},\hdots,\vv{x}{1}{i_1+k_1}\}\times\{\vv{x}{2}{i_2+k_2-r},\hdots,\vv{x}{2}{i_2+k_2}\}\times \hdots \times \{\vv{x}{n}{i_n+k_n-r},\hdots,\vv{x}{n}{i_n+k_n}\}=\prod_{j=1}^n \mathcal{S}_{k_j}^r.
\end{split}
\end{equation*}
Then, we replace the optimal weights by non-linear ones using the following formula
\begin{equation}\label{omegand}
\omega^r_{\mathbf{k}}(\xe)=\frac{\alpha^r_{\mathbf{k}}(\xe)}{\sum_{\mathbf{l}\in\{0,\hdots,r-1\}^n}\alpha^r_{\mathbf{l}}(\xe)},\,\,\text{with}\,\,
\alpha^r_{\mathbf{k}}(\xe)=\frac{C^r_{\mathbf{k}}(\xe)}{(\epsilon+I^r_{\mathbf{k}})^t},
\end{equation}
being $I^r_{\mathbf{k}}$ the smoothness indicators with the same requirements pointed out in Section \ref{wenoprogresivo1d}, i.e.:
\begin{enumerate}[label={\bfseries P\arabic*}]
\item\label{P1sm} The order of the smoothness indicator free of discontinuities is $h^2$, i.e.
$$I^r_{\mathbf{k}}=O(h^2) \,\, \text{if}\,\, f \,\, \text{is smooth in } \,\, \mathcal{S}^r_{\mathbf{k}}.$$
\item\label{P2sm} The distance between two smoothness indicators free
  of discontinuities is $h^{r+1}$, i.e. let
  $\mathbf{k}=(k_1,\hdots,k_n)$, and $\mathbf{k}'=(k'_1,\hdots,k'_n)$
  be such that
there does not exist any discontinuity in $\mathcal{S}^r_{\mathbf{k}}$ and $\mathcal{S}^r_{\mathbf{k}'}$ then
$$I^r_{\mathbf{k}}-I^r_{\mathbf{k}'}=O(h^{r+1}).$$
\item\label{P3sm} When a discontinuity crosses the stencil $\mathcal{S}^r_{\mathbf{k}}$ then:
$$I^r_{\mathbf{k}} \nrightarrow 0 \,\, \text{as}\,\, h\to 0.$$
\end{enumerate}
An example of smoothness indicators will be introduced in Section
\ref{smoothaccur}. The parameters $\epsilon$ and $t$ are chosen to
obtain maximum order. In our case, following \cite{ABM} and
\cite{arandigamuletrenau} we will take $\epsilon=h^2$ and
$t=\frac{1}{2}(r+1)$. Therefore, we state the  next result, which is
similar to  \dioni{}{Proposition} 2 in \cite{ABM} and Theorem 1 in
\cite{arandigamuletrenau}.

\begin{theorem}
Let $\xe\in \prod_{j=1}^n [\vv{x}{j}{i_j-1},\vv{x}{j}{i_j}]$, the multivariate WENO interpolant
\begin{equation}
\mathcal{I}^{2r-1}(\xe;f)=\sum_{\mathbf{k}\in \{0,1,\hdots,r-1\}^n}\omega^r_{\mathbf{k}}(\xe)p_{\mathbf{k}}^r(\xe)
\end{equation}
with $\omega^r_{\mathbf{k}}$, $\mathbf{k}\in \{0,1,\hdots,r-1\}^n$
defined in Eq. \eqref{omegand}, with smoothness indicator
$I^r_{\mathbf{k}}$, $\mathbf{k}\in \{0,1,\hdots,r-1\}^n$ \dioni{}{fulfilling}
\ref{P1sm}, \ref{P2sm}, \ref{P3sm}; $\epsilon=h^2$ and
$t=\frac{1}{2}(r+1)$ satisfies:
$$f(\xe)-\mathcal{I}^{2r-1}(\xe;f)=
\begin{cases}
O(h^{2r}), & \text{at smooth regions,}\\
O(h^{r+1}), & \text{if, at least, one stencil lies in a smooth region.}
\end{cases}
$$
\end{theorem}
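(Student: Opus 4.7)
The plan is to follow the pattern of Proposition 2 in \cite{ABM} and Theorem 1 in \cite{arandigamuletrenau}, adapting it to the tensor-product multivariate setting. The starting point is to split the error by inserting the full-stencil linear interpolant, using the identity
$p^{2r-1}_{\mathbf{0}}(\xe)=\sum_{\mathbf{k}}C^r_{\mathbf{k}}(\xe)p^r_{\mathbf{k}}(\xe)$, to obtain
$$f(\xe)-\mathcal{I}^{2r-1}(\xe;f) \;=\; \bigl(f(\xe)-p^{2r-1}_{\mathbf{0}}(\xe)\bigr) \;+\; \sum_{\mathbf{k}\in\{0,\ldots,r-1\}^n}\bigl(C^r_{\mathbf{k}}(\xe)-\omega^r_{\mathbf{k}}(\xe)\bigr)\,p^r_{\mathbf{k}}(\xe).$$
The first summand is $O(h^{2r})$ by the tensor-product error formula \eqref{errorfuncion} whenever $f$ is smooth on $\mathcal{S}^{2r}_{\mathbf{0}}$, and is irrelevant in the non-smooth case as long as at least one sub-stencil is clean.

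For the smooth-region bound, I would show that $\omega^r_{\mathbf{k}}(\xe)=C^r_{\mathbf{k}}(\xe)+O(h^{r-1})$. Using \ref{P1sm} and \ref{P2sm}, all smoothness indicators admit a common leading part, $I^r_{\mathbf{k}}=D(\xe)h^2+O(h^{r+1})$, so with $\epsilon=h^2$ one has $(\epsilon+I^r_{\mathbf{k}})^t=\bigl((1+D)h^2\bigr)^t\bigl(1+O(h^{r-1})\bigr)$; plugging this into \eqref{omegand} and using $\sum_{\mathbf{l}}C^r_{\mathbf{l}}(\xe)=1$ (tensor product of the univariate partition of unity) yields the claimed expansion. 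Since $\sum_{\mathbf{k}}\bigl(C^r_{\mathbf{k}}-\omega^r_{\mathbf{k}}\bigr)=0$, the second summand equals $\sum_{\mathbf{k}}\bigl(C^r_{\mathbf{k}}-\omega^r_{\mathbf{k}}\bigr)\bigl(p^r_{\mathbf{k}}(\xe)-f(\xe)\bigr)$, and the sub-stencil error bound $p^r_{\mathbf{k}}(\xe)-f(\xe)=O(h^{r+1})$ then gives the overall $O(h^{r-1})\cdot O(h^{r+1})=O(h^{2r})$.

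For the second case, partition the index set into \emph{smooth} indices $K_s$ (those $\mathbf{k}$ for which $\mathcal{S}^r_{\mathbf{k}}$ lies in a region where $f$ is smooth) and \emph{non-smooth} indices $K_n=\{0,\ldots,r-1\}^n\setminus K_s$. By \ref{P3sm}, for $\mathbf{k}\in K_n$ the indicator $I^r_{\mathbf{k}}$ stays bounded away from $0$, so $\alpha^r_{\mathbf{k}}=O(1)$; conversely, \ref{P1sm} and $\epsilon=h^2$ give $\alpha^r_{\mathbf{k}}=\Theta(h^{-2t})=\Theta(h^{-(r+1)})$ for $\mathbf{k}\in K_s$. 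Hence $\omega^r_{\mathbf{k}}=O(h^{r+1})$ on $K_n$ and $\sum_{K_s}\omega^r_{\mathbf{k}}=1-O(h^{r+1})$. Splitting the sum defining $\mathcal{I}^{2r-1}$ accordingly, using $p^r_{\mathbf{k}}(\xe)=f(\xe)+O(h^{r+1})$ on $K_s$ and only the boundedness of $p^r_{\mathbf{k}}(\xe)$ on $K_n$, delivers the bound $f(\xe)-\mathcal{I}^{2r-1}(\xe;f)=O(h^{r+1})$.

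The main obstacle I anticipate is the multi-index bookkeeping in the smooth case: propagating \ref{P2sm} through the reciprocal $(\epsilon+I^r_{\mathbf{k}})^{-t}$ so that the $\omega^r_{\mathbf{k}}-C^r_{\mathbf{k}}$ gap is truly $O(h^{r-1})$ (rather than $O(1)$), and verifying that the tensor-product Lagrange interpolant on a sub-stencil actually achieves $p^r_{\mathbf{k}}(\xe)-f(\xe)=O(h^{r+1})$ uniformly at an arbitrary evaluation point $\xe$ of the central cell, not just at a grid node. Once these two facts are in place, the rest is the standard partition-of-unity cancellation, identical in spirit to the univariate arguments of \cite{ABM,arandigamuletrenau}.
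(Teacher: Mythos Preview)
The paper does not actually supply a proof of this theorem: it is stated immediately after the sentence ``we state the next result, which is similar to Proposition 2 in \cite{ABM} and Theorem 1 in \cite{arandigamuletrenau}'' and is left unproved, with the reader referred to those works. Your proposal is precisely the natural tensor-product adaptation of those two proofs---the error splitting via $p^{2r-1}_{\mathbf{0}}$, the expansion $\omega^r_{\mathbf{k}}=C^r_{\mathbf{k}}+O(h^{r-1})$ from \ref{P1sm}--\ref{P2sm} with $\epsilon=h^2$, and the $O(h^{2t})=O(h^{r+1})$ suppression of contaminated weights from \ref{P3sm}---so it is exactly what the paper intends and is correct.
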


\section{A new progressive bivariate, \dioni{}{$n=2$}, WENO method}\label{weno2d}

The idea of this new method is to reach the maximum possible order of
accuracy when one discontinuity crosses the largest stencil. For this
purpose, we use the Aitken-Neville-based algorithm developed in
\dioni{}{multi-dimensions}, \cite{gascaaitkenclasica}. We start with a
polynomial of degree $2r-1$ and decompose it in \dioni{}{$2^2$} polynomials of
degree $2r-2$ obtaining as weights polynomials of degree 1 that will
be replaced by non-linear weights, depending on the location of the
discontinuity. Afterwards, we continue decomposing the \dioni{}{$2^2$}
polynomials of degree $2r-2$ in polynomials of degree $2r-3$, and so
on. In each step, the non-linear weights determine the stencils free
of discontinuity, which will be used to approximate the value. The
procedure is similar to the method expounded in Section
\ref{wenoprogresivo1d}. \dioni{}{For
ease of reading, we start with $n = 2$ in this section, and then we extend our
results to any dimension $n$ in Section \ref{wenond}.}

\subsection{A first example: A progressive bivariate WENO method with $r=2$}

Let us start by designing a bivariate WENO method with $r=2$. We suppose a non
necessarily uniform  grid in $[a_1,b_1]\times[a_2,b_2]$
defined by $\{(\vv{x}{1}{l_1}, \vv{x}{2}{l_2})\}_{(l_1,l_2)=(0,0)}^{(J_1,J_2)}$, with $a_1=\vv{x}{1}{0}<\vv{x}{1}{1}<\hdots<\vv{x}{1}{J_1}=b_1$, and $a_2=\vv{x}{2}{0}<\vv{x}{2}{1}<\hdots<\vv{x}{2}{J_2}=b_2$, and the data $f_{l_1,l_2}=f(\vv{x}{1}{l_1},\vv{x}{2}{l_2})$, $l_j=0,\hdots,J_j$, with $j=1,2$. Let $(i_1,i_2)\in \mathbb{N}^2$ such that $0\leq i_j-2$ and $i_j+1\leq J_j$, with $j=1,2$. We determine the largest stencil $$\mathcal{S}^{4}_{(0,0)}=\{\vv{x}{1}{i_1-2},\vv{x}{1}{i_1-1},\vv{x}{1}{i_1},\vv{x}{1}{i_1+1}\}\times\{\vv{x}{2}{i_2-2},\vv{x}{2}{i_2-1},\vv{x}{2}{i_2},\vv{x}{2}{i_2+1}\}=(\mathcal{S}_{0}^{4})_1\times(\mathcal{S}_{0}^{4})_2,$$
and we want to interpolate at any point
$\xe=(x^*_1,x^*_2)\in[\vv{x}{1}{i_1-1},\vv{x}{1}{i_1}]\times [\vv{x}{2}{i_2-1},\vv{x}{2}{i_2}]$. We compute the polynomial
$$p^{3}_{(0,0)}(x_1,x_2)=\sum_{l_1,l_2=0}^3 a_{l_1,l_2}x_1^{l_1}x_2^{l_2},$$
such that $p^{3}_{(0,0)}(\vv{x}{1}{l_1},\vv{x}{2}{l_2})=f_{l_1,l_2}$ if $(\vv{x}{1}{l_1},\vv{x}{2}{l_2})\in\mathcal{S}^{4}_{(0,0)}$. Again, we can express it as the sum of $2^2$ polynomials that interpolate in the stencils:
\begin{equation*}
\begin{split}
&\mathcal{S}^{3}_{(0,0)}=\{\vv{x}{1}{i_1-2},\vv{x}{1}{i_1-1},\vv{x}{1}{i_1}\}\times\{\vv{x}{2}{i_2-2},\vv{x}{2}{i_2-1},\vv{x}{2}{i_2}\}=(\mathcal{S}_{0}^{3})_1\times(\mathcal{S}_{0}^{3})_2,\\ &\mathcal{S}^{3}_{(1,0)}=\{\vv{x}{1}{i_1-1},\vv{x}{1}{i_1},\vv{x}{1}{i_1+1}\}\times\{\vv{x}{2}{i_2-2},\vv{x}{2}{i_2-1},\vv{x}{2}{i_2}\}=(\mathcal{S}_{1}^{3})_1\times(\mathcal{S}_{0}^{3})_2,\\
&\mathcal{S}^{3}_{(0,1)}=\{\vv{x}{1}{i_1-2},\vv{x}{1}{i_1-1},\vv{x}{1}{i_1}\}\times\{\vv{x}{2}{i_2-1},\vv{x}{2}{i_2},\vv{x}{2}{i_2+1}\}=(\mathcal{S}_{0}^{3})_1\times(\mathcal{S}_{1}^{3})_2,\\ &\mathcal{S}^{3}_{(1,1)}=\{\vv{x}{1}{i_1-1},\vv{x}{1}{i_1},\vv{x}{1}{i_1+1}\}\times\{\vv{x}{2}{i_2-1},\vv{x}{2}{i_2},\vv{x}{2}{i_2+1}\}=(\mathcal{S}_{1}^{3})_1\times(\mathcal{S}_{1}^{3})_2,
\end{split}
\end{equation*}
called $p^{2}_{(0,0)}, p^{2}_{(1,0)}, p^{2}_{(0,1)}, p^{2}_{(1,1)}$ using, as we mentioned before, the Aitken-Neville-type formula given in \cite{gascaaitkenclasica}:
\begin{equation}
p^{3}_{(0,0)}(\xe)=\sum_{\mathbf{j}_0\in\{0,1\}^2}C_{(0,0),\mathbf{j}_0}^2(\xe)p^{2}_{\mathbf{j}_0}(\xe),
\end{equation}
with
\begin{equation*}
\begin{split}
C^2_{(0,0),(0,0)}(\xe)&=\left(\frac{x^*_1-\vv{x}{1}{i_1+1}}{\vv{x}{1}{i_1-2}-\vv{x}{1}{i_1+1}}\right)\left(\frac{x^*_2-\vv{x}{2}{i_2+1}}{\vv{x}{2}{i_2-2}-\vv{x}{2}{i_2+1}}\right)= C^2_{0,0}(x_1^*)C^2_{0,0}(x_2^*),\\ C^2_{(0,0),(1,0)}(\xe)&=\left(\frac{x^*_1-\vv{x}{1}{i_1-2}}{\vv{x}{1}{i_1+1}-\vv{x}{1}{i_1-2}}\right)\left(\frac{x^*_2-\vv{x}{2}{i_2+1}}{\vv{x}{2}{i_2-2}-\vv{x}{2}{i_2+1}}\right)= C^2_{0,1}(x_1^*)C^2_{0,0}(x_2^*),\\
C^2_{(0,0),(0,1)}(\xe)&=\left(\frac{x^*_1-\vv{x}{1}{i_1+1}}{\vv{x}{1}{i_1-2}-\vv{x}{1}{i_1+1}}\right)\left(\frac{x^*_2-\vv{x}{2}{i_2-2}}{\vv{x}{2}{i_2+1}-\vv{x}{2}{i_2-2}}\right)= C^2_{0,0}(x_1^*)C^2_{0,1}(x_2^*),\\
C^2_{(0,0),(1,1)}(\xe)&=\left(\frac{x^*_1-\vv{x}{1}{i_1-2}}{\vv{x}{1}{i_1+1}-\vv{x}{1}{i_1-2}}\right)\left(\frac{x^*_2-\vv{x}{2}{i_2-2}}{\vv{x}{2}{i_2+1}-\vv{x}{2}{i_2-2}}\right)= C^2_{0,1}(x_1^*)C^2_{0,1}(x_2^*).
\end{split}
\end{equation*}
Finally, we define the non-linear weights as:
\begin{equation*}
\tilde{\omega}_{(0,0),\mathbf{k}}^2(\xe)=\frac{\tilde{\alpha}_{(0,0),\mathbf{k}}^2(\xe)}{\sum_{\mathbf{l}\in\{0,1\}^2}\tilde{\alpha}_{(0,0),\mathbf{l}}^2(\xe)},\,\,  \textrm{ where }\,\, \tilde{\alpha}_{(0,0),\mathbf{k}}^2(\xe)=\frac{C_{(0,0),\mathbf{k}}^2(\xe)}{(\epsilon+I_{(0,0),\mathbf{k}}^2)^t},\quad \mathbf{k}\in\{0,1\}^2,
\end{equation*}
with $I_{(0,0),\mathbf{k}}^2$ defined as:
 $$I_{(0,0),(0,0)}^2=I_{(0,0)}^2, \quad I_{(0,0),(0,1)}^2=I_{(0,1)}^2,\quad I_{(0,0),(1,0)}^2=I_{(1,0)}^2,\quad I_{(0,0),(1,1)}^2=I_{(1,1)}^2,$$
where $I_{\mathbf{k}}^2$, $\mathbf{k}\in\{0,1\}^2$ are smoothness indicators which satisfy the properties \ref{P1sm}, \ref{P2sm} and \ref{P3sm}. The parameters $t$ and $\epsilon$ are defined in Section \ref{wenoclasicond} as $h^2$ and $r+1=3$ respectively. Thus, we define the new interpolant as:
$$\mathcal{I}^3\left(\xe;f\right)=\sum_{\mathbf{j}_0\in\{0,1\}^2}\omega_{(0,0),\mathbf{j}_0}^2(\xe)p^{2}_{\mathbf{j}_0}(\xe).$$
If  we apply it at mid-points, the interpolant presented in \cite{arandigamuletrenau} and the method showed in Section \ref{wenoclasicond} for $n=2$ are similar
if the same smoothness indicators are used. Therefore, when $r=2$ there are not differences between the progressive WENO and the classical one. We show the construction of the new method for $r=3$.

\subsection{The case $r=3$}
In this case, we use all the points of the following largest stencil:
$$\mathcal{S}^{6}_{(0,0)}=\{\vv{x}{1}{i_1-3},\vv{x}{1}{i_1-2},\vv{x}{1}{i_1-1},\vv{x}{1}{i_1},\vv{x}{1}{i_1+1},\vv{x}{1}{i_1+2}\}\times\{\vv{x}{2}{i_2-3},\vv{x}{2}{i_2-2},\vv{x}{2}{i_2-1},\vv{x}{2}{i_2},\vv{x}{2}{i_2+1},\vv{x}{2}{i_2+2}\}.$$
We follow the recursive formula described in \eqref{formulapep}. Thus, we start calculating the evaluation at the point $\xe$ of the polynomials:
$$p^{3}_{\mathbf{j}_1}(\xe), \quad  \mathbf{j}_1=(j^{(1)}_1,j^{(1)}_2) \in \{0,1,2\}^2,$$
being the stencils used for constructing each polynomial $\mathcal{S}^{3}_{(l_1,l_2)}$, with $l_j=0,1,2$, and $j=1,2$, see Figure \ref{wenonormal}.
\begin{figure}[!hbtp]
\usetikzlibrary{arrows}
\begin{center}
  \input{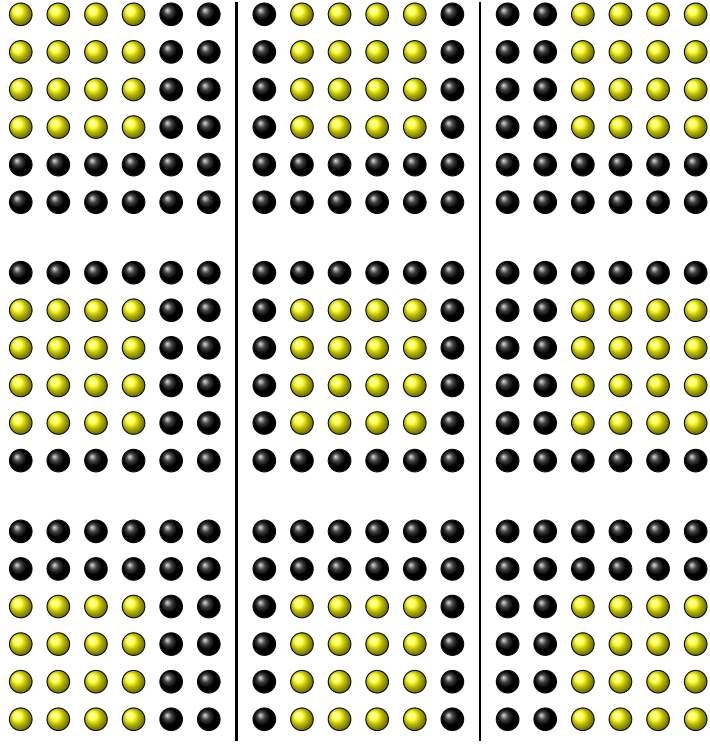}
\end{center}
\caption{Stencils used to get $p^{3}_{\mathbf{j}_1}(\xe), \,  \mathbf{j}_1\in \{0,1,2\}^2$. They are used in classical bivariate WENO \cite{arandigamuletrenau}}\label{wenonormal}
\end{figure}
And, by means of the Aitken-Neville formula, we represent $p^4_{\mathbf{j}_0}$, $\mathbf{j}_0=(j^{(0)}_1,j^{(0)}_2) \in\{0,1\}^2$, whose stencils are displayed in Figure \ref{progresivo1}, as the sum of the evaluations of polynomials of degree 3 (Figure \ref{progresive2})
\begin{equation}\label{eqp4primerejemplo}
\begin{split}
&p^{4}_{(0,0)}(\xe)=\sum_{\mathbf{j}_1\in\{0,1\}\times\{0,1\}}C_{(0,0),\mathbf{j}_1}^3(\xe)p^{3}_{\mathbf{j}_1}(\xe),\quad
p^{4}_{(1,0)}(\xe)=\sum_{\mathbf{j}_1\in\{1,2\}\times\{0,1\}}C_{(1,0),\mathbf{j}_1}^3(\xe)p^{3}_{\mathbf{j}_1}(\xe),\\
&p^{4}_{(0,1)}(\xe)=\sum_{\mathbf{j}_1\in\{0,1\}\times\{1,2\}}C_{(0,1),\mathbf{j}_1}^3(\xe)p^{3}_{\mathbf{j}_1}(\xe),\quad p^{4}_{(1,1)}(\xe)=\sum_{\mathbf{j}_1\in\{1,2\}\times\{1,2\}}C_{(1,1),\mathbf{j}_1}^3(\xe)p^{3}_{\mathbf{j}_1}(\xe),\\
\end{split}
\end{equation}
with
$$C^3_{\mathbf{j}_0,\mathbf{j}_1}=C^3_{(j^{(0)}_1,j^{(0)}_2),(j^{(1)}_1,j^{(1)}_2)}=C^3_{j^{(0)}_1,j^{(1)}_1}C^3_{j^{(0)}_2,j^{(1)}_2}, \,\, \mathbf{j}_0\in\{0,1\}^2, \,\,
\mathbf{j}_1\in \mathbf{j}_0+\{0,1\}^2,$$
where $C^3_{k,k_1}$, $k=0,1$, $k_1=k+\{0,1\}$ are defined in Eq. \eqref{pesostodosgeneral1d}. Now, we replace in Eq. \eqref{eqp4primerejemplo} the linear-weights for non-linear ones:
\begin{equation*}
\tilde{\omega}_{\mathbf{j}_0,\mathbf{j}_1}^3(\xe)=\frac{\tilde{\alpha}_{\mathbf{j}_0,\mathbf{j}_1}^3(\xe)}{\sum_{\mathbf{l}\in\mathbf{j}_0+\{0,1\}^2}\tilde{\alpha}_{\mathbf{j}_0,\mathbf{l}}^3(\xe)},\,\,  \textrm{ where }\,\, \tilde{\alpha}_{\mathbf{j}_0,\mathbf{j}_1}^3(\xe)=\frac{C_{\mathbf{j}_0,\mathbf{j}_1}^3(\xe)}{(\epsilon+I_{\mathbf{j}_0,\mathbf{j}_1}^3)^t},\quad \mathbf{j}_0\in\{0,1\}^2, \, \mathbf{j}_1\in\mathbf{j}_0+\{0,1\}^2,
\end{equation*}
and
$$I_{\mathbf{j}_0,\mathbf{j}_1}^3=I_{\mathbf{j}_1}^3,\,\, \mathbf{j}_0\in\{0,1\}^2, \,\,
\mathbf{j}_1\in \mathbf{j}_0+\{0,1\}^2,$$
being $I^3_{\mathbf{j}_1}$ smoothness indicators which satisfy the properties \ref{P1sm}, \ref{P2sm} and \ref{P3sm}.
We obtain the approximation:
\begin{equation}\label{eqp4primerejemplonolineal}
\begin{split}
&\tilde{p}^{4}_{(0,0)}(\xe)=\sum_{\mathbf{j}_1\in\{0,1\}\times\{0,1\}}\tilde{\omega}_{(0,0),\mathbf{j}_1}^3(\xe)p^{3}_{\mathbf{j}_1}(\xe),\quad
\tilde{p}^{4}_{(1,0)}(\xe)=\sum_{\mathbf{j}_1\in\{1,2\}\times\{0,1\}}\tilde{\omega}_{(1,0),\mathbf{j}_1}^3(\xe)p^{3}_{\mathbf{j}_1}(\xe),\\
&\tilde{p}^{4}_{(0,1)}(\xe)=\sum_{\mathbf{j}_1\in\{0,1\}\times\{1,2\}}\tilde{\omega}_{(0,1),\mathbf{j}_1}^3(\xe)p^{3}_{\mathbf{j}_1}(\xe),\quad \tilde{p}^{4}_{(1,1)}(\xe)=\sum_{\mathbf{j}_1\in\{1,2\}\times\{1,2\}}\tilde{\omega}_{(1,1),\mathbf{j}_1}^3(\xe)p^{3}_{\mathbf{j}_1}(\xe).\\
\end{split}
\end{equation}

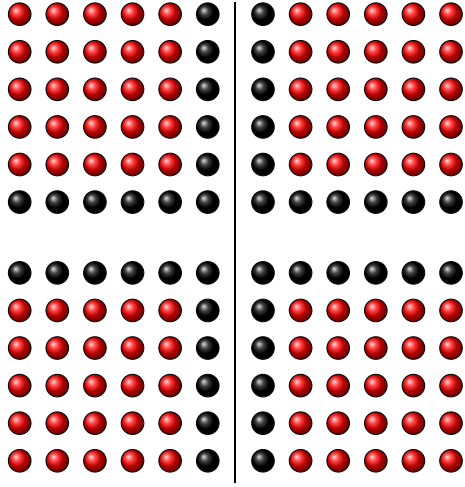
\begin{figure}[H]
\usetikzlibrary{arrows}
\begin{center}
  \begin{tabular}{c|c}
\begin{tikzpicture}
\draw[ball color=black] (0  ,0) circle (.15);
\draw[ball color=black] (0.5,0) circle (.15);
\draw[ball color=black] (1  ,0) circle (.15);
\draw[ball color=black] (1.5,0) circle (.15);
\draw[ball color=black]  (2  ,0) circle (.15);
\draw[ball color=black]  (2.5,0) circle (.15);

\draw[ball color=red] (0  ,0.5) circle (.15);
\draw[ball color=red] (0.5,0.5) circle (.15);
\draw[ball color=red] (1  ,0.5) circle (.15);
\draw[ball color=red] (1.5,0.5) circle (.15);
\draw[ball color=red]  (2  ,0.5) circle (.15);
\draw[ball color=black]  (2.5,0.5) circle (.15);

\draw[ball color=red](0  ,1) circle (.15);
\draw[ball color=red](0.5,1) circle (.15);
\draw[ball color=red] (1  ,1) circle (.15);
\draw[ball color=red] (1.5,1) circle (.15);
\draw[ball color=red] (2  ,1) circle (.15);
\draw[ball color=black] (2.5,1) circle (.15);

\draw[ball color=red](0  ,1.5) circle (.15);
\draw[ball color=red](0.5,1.5) circle (.15);
\draw[ball color=red] (1  ,1.5) circle (.15);
\draw[ball color=red] (1.5,1.5) circle (.15);
\draw[ball color=red] (2  ,1.5) circle (.15);
\draw[ball color=black] (2.5,1.5) circle (.15);

\draw[ball color=red] (0  ,2) circle (.15);
\draw[ball color=red] (0.5,2) circle (.15);
\draw[ball color=red] (1  ,2) circle (.15);
\draw[ball color=red] (1.5,2) circle (.15);
\draw[ball color=red] (2  ,2) circle (.15);
\draw[ball color=black] (2.5,2) circle (.15);

\draw[ball color=red] (0  ,2.5) circle (.15);
\draw[ball color=red] (0.5,2.5) circle (.15);
\draw[ball color=red] (1  ,2.5) circle (.15);
\draw[ball color=red] (1.5,2.5) circle (.15);
\draw[ball color=red] (2  ,2.5) circle (.15);
\draw[ball color=black] (2.5,2.5) circle (.15);
\end{tikzpicture}
&
\begin{tikzpicture}
\draw[ball color=black] (0  ,0) circle (.15);
\draw[ball color=black] (0.5,0) circle (.15);
\draw[ball color=black] (1  ,0) circle (.15);
\draw[ball color=black] (1.5,0) circle (.15);
\draw[ball color=black]  (2  ,0) circle (.15);
\draw[ball color=black]  (2.5,0) circle (.15);

\draw[ball color=black] (0  ,0.5) circle (.15);
\draw[ball color=red] (0.5,0.5) circle (.15);
\draw[ball color=red] (1  ,0.5) circle (.15);
\draw[ball color=red] (1.5,0.5) circle (.15);
\draw[ball color=red]  (2  ,0.5) circle (.15);
\draw[ball color=red]  (2.5,0.5) circle (.15);

\draw[ball color=black](0  ,1) circle (.15);
\draw[ball color=red](0.5,1) circle (.15);
\draw[ball color=red] (1  ,1) circle (.15);
\draw[ball color=red] (1.5,1) circle (.15);
\draw[ball color=red] (2  ,1) circle (.15);
\draw[ball color=red] (2.5,1) circle (.15);

\draw[ball color=black](0  ,1.5) circle (.15);
\draw[ball color=red](0.5,1.5) circle (.15);
\draw[ball color=red] (1  ,1.5) circle (.15);
\draw[ball color=red] (1.5,1.5) circle (.15);
\draw[ball color=red] (2  ,1.5) circle (.15);
\draw[ball color=red] (2.5,1.5) circle (.15);

\draw[ball color=black] (0  ,2) circle (.15);
\draw[ball color=red] (0.5,2) circle (.15);
\draw[ball color=red] (1  ,2) circle (.15);
\draw[ball color=red] (1.5,2) circle (.15);
\draw[ball color=red] (2  ,2) circle (.15);
\draw[ball color=red] (2.5,2) circle (.15);

\draw[ball color=black] (0  ,2.5) circle (.15);
\draw[ball color=red] (0.5,2.5) circle (.15);
\draw[ball color=red] (1  ,2.5) circle (.15);
\draw[ball color=red] (1.5,2.5) circle (.15);
\draw[ball color=red] (2  ,2.5) circle (.15);
\draw[ball color=red] (2.5,2.5) circle (.15);
\end{tikzpicture}
\\[0.5cm]
\begin{tikzpicture}
\draw[ball color=red] (0  ,0) circle (.15);
\draw[ball color=red] (0.5,0) circle (.15);
\draw[ball color=red] (1  ,0) circle (.15);
\draw[ball color=red] (1.5,0) circle (.15);
\draw[ball color=red]  (2  ,0) circle (.15);
\draw[ball color=black]  (2.5,0) circle (.15);

\draw[ball color=red] (0  ,0.5) circle (.15);
\draw[ball color=red] (0.5,0.5) circle (.15);
\draw[ball color=red] (1  ,0.5) circle (.15);
\draw[ball color=red] (1.5,0.5) circle (.15);
\draw[ball color=red]  (2  ,0.5) circle (.15);
\draw[ball color=black]  (2.5,0.5) circle (.15);

\draw[ball color=red](0  ,1) circle (.15);
\draw[ball color=red](0.5,1) circle (.15);
\draw[ball color=red] (1  ,1) circle (.15);
\draw[ball color=red] (1.5,1) circle (.15);
\draw[ball color=red] (2  ,1) circle (.15);
\draw[ball color=black] (2.5,1) circle (.15);

\draw[ball color=red](0  ,1.5) circle (.15);
\draw[ball color=red](0.5,1.5) circle (.15);
\draw[ball color=red] (1  ,1.5) circle (.15);
\draw[ball color=red] (1.5,1.5) circle (.15);
\draw[ball color=red] (2  ,1.5) circle (.15);
\draw[ball color=black] (2.5,1.5) circle (.15);

\draw[ball color=red] (0  ,2) circle (.15);
\draw[ball color=red] (0.5,2) circle (.15);
\draw[ball color=red] (1  ,2) circle (.15);
\draw[ball color=red] (1.5,2) circle (.15);
\draw[ball color=red] (2  ,2) circle (.15);
\draw[ball color=black] (2.5,2) circle (.15);

\draw[ball color=black] (0  ,2.5) circle (.15);
\draw[ball color=black] (0.5,2.5) circle (.15);
\draw[ball color=black] (1  ,2.5) circle (.15);
\draw[ball color=black] (1.5,2.5) circle (.15);
\draw[ball color=black] (2  ,2.5) circle (.15);
\draw[ball color=black] (2.5,2.5) circle (.15);
\end{tikzpicture}
&
\begin{tikzpicture}
\draw[ball color=black] (0  ,0) circle (.15);
\draw[ball color=red] (0.5,0) circle (.15);
\draw[ball color=red] (1  ,0) circle (.15);
\draw[ball color=red] (1.5,0) circle (.15);
\draw[ball color=red]  (2  ,0) circle (.15);
\draw[ball color=red]  (2.5,0) circle (.15);

\draw[ball color=black] (0  ,0.5) circle (.15);
\draw[ball color=red] (0.5,0.5) circle (.15);
\draw[ball color=red] (1  ,0.5) circle (.15);
\draw[ball color=red] (1.5,0.5) circle (.15);
\draw[ball color=red]  (2  ,0.5) circle (.15);
\draw[ball color=red]  (2.5,0.5) circle (.15);

\draw[ball color=black](0  ,1) circle (.15);
\draw[ball color=red](0.5,1) circle (.15);
\draw[ball color=red] (1  ,1) circle (.15);
\draw[ball color=red] (1.5,1) circle (.15);
\draw[ball color=red] (2  ,1) circle (.15);
\draw[ball color=red] (2.5,1) circle (.15);

\draw[ball color=black](0  ,1.5) circle (.15);
\draw[ball color=red](0.5,1.5) circle (.15);
\draw[ball color=red] (1  ,1.5) circle (.15);
\draw[ball color=red] (1.5,1.5) circle (.15);
\draw[ball color=red] (2  ,1.5) circle (.15);
\draw[ball color=red] (2.5,1.5) circle (.15);

\draw[ball color=black] (0  ,2) circle (.15);
\draw[ball color=red] (0.5,2) circle (.15);
\draw[ball color=red] (1  ,2) circle (.15);
\draw[ball color=red] (1.5,2) circle (.15);
\draw[ball color=red] (2  ,2) circle (.15);
\draw[ball color=red] (2.5,2) circle (.15);

\draw[ball color=black] (0  ,2.5) circle (.15);
\draw[ball color=black] (0.5,2.5) circle (.15);
\draw[ball color=black] (1  ,2.5) circle (.15);
\draw[ball color=black] (1.5,2.5) circle (.15);
\draw[ball color=black] (2  ,2.5) circle (.15);
\draw[ball color=black] (2.5,2.5) circle (.15);
\end{tikzpicture}
\end{tabular}
\end{center}
\caption{Stencils used to get $p^4_{\mathbf{j}_0}(\xe)$, $\mathbf{j}_0\in\{0,1\}^2$.}\label{progresivo1}
\end{figure}

The last step in our new algorithm for $r=3$ is to develop the polynomial of degree 5 as follows:
\begin{equation}\label{p5lineal}
p^{5}_{(0,0)}(\xe)=\sum_{\mathbf{j}_0\in\{0,1\}^2}C_{(0,0),\mathbf{j}_0}^4({\xe})p^{4}_{\mathbf{j}_0}(\xe),
\end{equation}
with
\begin{equation*}
\begin{split}
C^4_{(0,0),(0,0)}(\xe)&=\left(\frac{x^*_1-\vv{x}{1}{i_1+2}}{\vv{x}{1}{i_1-3}-\vv{x}{1}{i_1+2}}\right)\left(\frac{x^*_2-\vv{x}{2}{i_2+2}}{\vv{x}{2}{i_2-3}-\vv{x}{2}{i_2+2}}\right)=C^4_{0,0}(x_1^*)C^4_{0,0}(x_2^*),\\ C^4_{(0,0),(1,0)}(\xe)&=\left(\frac{x^*_1-\vv{x}{1}{i_1-3}}{\vv{x}{1}{i_1+2}-\vv{x}{1}{i_1-3}}\right)\left(\frac{x^*_2-\vv{x}{2}{i_2+2}}{\vv{x}{2}{i_2-3}-\vv{x}{2}{i_2+2}}\right)=C^4_{0,1}(x_1^*)C^4_{0,0}(x_2^*),\\
C^4_{(0,0),(0,1)}(\xe)&=\left(\frac{x^*_1-\vv{x}{1}{i_1+2}}{\vv{x}{1}{i_1-3}-\vv{x}{1}{i_1+2}}\right)\left(\frac{x^*_2-\vv{x}{2}{i_2-3}}{\vv{x}{2}{i_2+2}-\vv{x}{2}{i_2-3}}\right)=C^4_{0,0}(x_1^*)C^4_{0,1}(x_2^*),\\
C^4_{(0,0),(1,1)}(\xe)&=\left(\frac{x^*_1-\vv{x}{1}{i_1-3}}{\vv{x}{1}{i_1+2}-\vv{x}{1}{i_1-3}}\right)\left(\frac{x^*_2-\vv{x}{2}{i_2-3}}{\vv{x}{2}{i_2+2}-\vv{x}{2}{i_2-3}}\right)=C^4_{0,1}(x_1^*)C^4_{0,1}(x_2^*),
\end{split}
\end{equation*}
and to define the new approximation changing both the non-linear weights, and the approximation to polynomials of degree 4
\begin{equation}\label{p5lineal}
\tilde{\mathcal{I}}^5(\xe;f):=\tilde{p}^{5}_{(0,0)}(\xe)=\sum_{\mathbf{j}_0\in\{0,1\}^2}\tilde{\omega}_{(0,0),\mathbf{j}_0}^4({\xe})\tilde{p}^{4}_{\mathbf{j}_0}(\xe),
\end{equation}
being $\tilde{p}^{4}_{\mathbf{j}_0}(\xe)$ obtained in Eq. \eqref{eqp4primerejemplonolineal} and
\begin{equation*}
\tilde{\omega}_{(0,0),\mathbf{j}_0}^4(\xe)=\frac{\alpha_{(0,0),\mathbf{j}_0}^4(\xe)}{\sum_{\mathbf{l}\in\{0,1\}^2}\alpha_{(0,0),\mathbf{l}}^4(\xe)},\,\,  \textrm{ where }\,\, \alpha_{(0,0),\mathbf{j}_0}^4(\xe)=\frac{C_{(0,0),\mathbf{j}_0}^4(\xe)}{(\epsilon+I_{(0,0),\mathbf{j}_0}^4)^t}, \quad \mathbf{j}_0\in\{0,1\}^2,
\end{equation*}
with the smoothness indicators defined in Eq. \eqref{equationindicadores} as:
\begin{equation*}
I_{(0,0),(0,0)}^4=I^3_{(0,0)},\quad I_{(0,0),(0,1)}^4=I^3_{(0,2)},\quad I_{(0,0),(1,0)}^4=I^3_{(2,0)},\quad I_{(0,0),(1,1)}^4=I^3_{(2,2)},
\end{equation*}
where $I^3_{\mathbf{j}_0}, \, \mathbf{j}_0\in\{0,1,2\}^2$ are smoothness indicators satisfying the properties \ref{P1sm}, \ref{P2sm}, and \ref{P3sm}.

\begin{figure}[!hbtp]
\usetikzlibrary{arrows}
\begin{center}
  \input{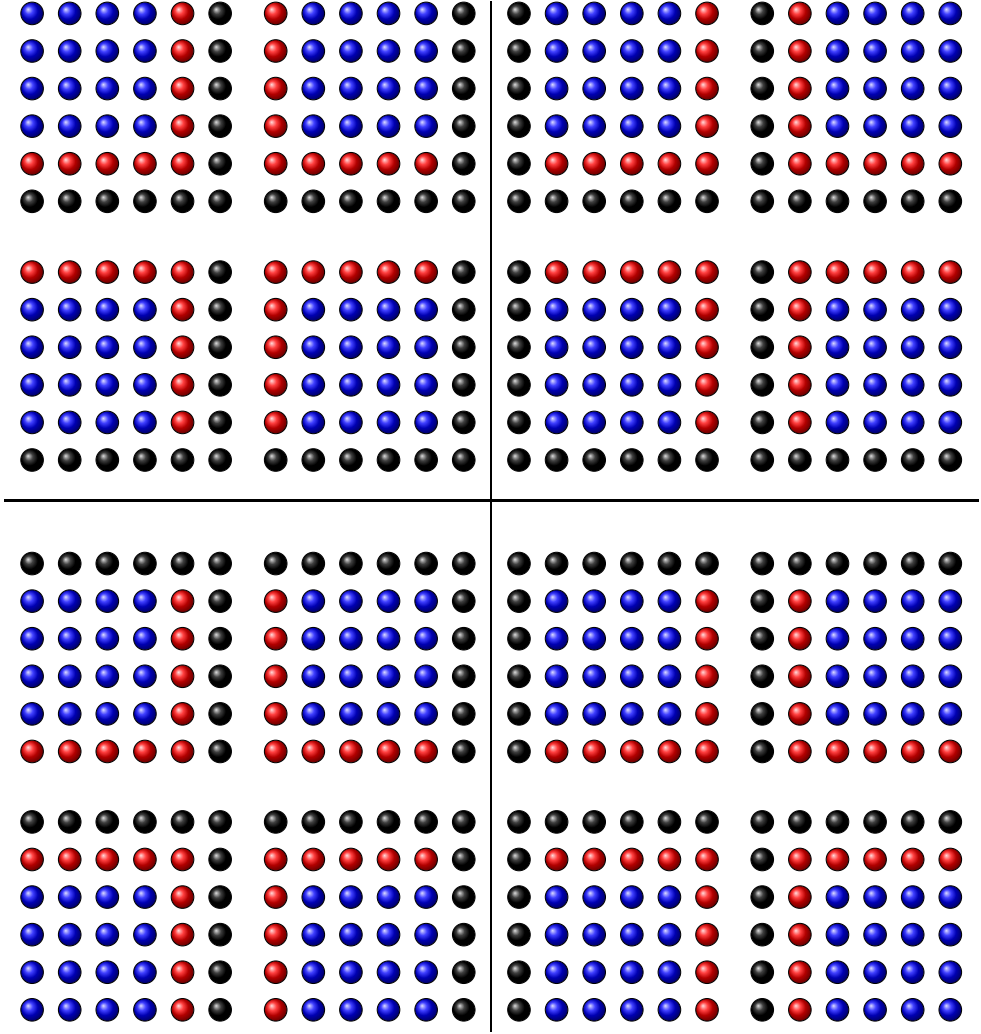}
\end{center}
\caption{Contribution of each stencil of $p^{3}_{\mathbf{j}_1}(\xe)$, $\mathbf{j}_1\in \mathbf{j}_1+\{0,1\}^2$ to the approximation of $p^4_{\mathbf{j}_0}(\xe)$, $\mathbf{j}_0\in\{0,1\}^2$.}\label{progresive2}
\end{figure}

\subsection{The general case}\label{weno2dgeneralcase}
Now, we construct the new bivariate  WENO-$2r$ algorithm using a larger stencil of $(2r)^2$ points
$$\mathcal{S}^{2r}_{(0,0)}=\{\vv{x}{1}{i_1-r},\vv{x}{1}{i_1-r+1},\hdots,\vv{x}{1}{i_1+r-1}\}\times \{\vv{x}{2}{i_2-r},\vv{x}{2}{i_2-r+1},\hdots,\vv{x}{2}{i_2+r-1}\}=(\mathcal{S}^{2r}_0)_1\times (\mathcal{S}^{2r}_0)_2.$$
The goal is to obtain maximum order of approximation if there exists an isolated discontinuity. We start representing the polynomial of degree $2r-1$ as follows:
$$p^{2r-1}_{(0,0)}=\sum_{\mathbf{j}_0\in\{0,1\}^2} C^{2r-2}_{(0,0),\mathbf{j}_0}p^{2r-2}_{\mathbf{j}_0},$$
and repeat the process up to degree $r$:
\begin{equation}\label{eqsuperimp02d}
\begin{split}
p^{2r-1}_{(0,0)}=&\sum_{\mathbf{j}_0\in\{0,1\}^2} C^{2r-2}_{(0,0),\mathbf{j}_0}\left(\sum_{\mathbf{j}_1\in\Omega_0}C_{\mathbf{j}_0,\mathbf{j}_1}^{2r-3}\left(\sum_{\mathbf{j}_2\in \Omega_1}C_{\mathbf{j}_1,\mathbf{j}_2}^{2r-4}\left(\hdots \left(\sum_{\mathbf{j}_{r-3}\in \Omega_{r-4}}C^{r+1}_{\mathbf{j}_{r-4},\mathbf{j}_{r-3}}\left(\sum_{\mathbf{j}_{r-2}\in \Omega_{r-3}}C^{r}_{\mathbf{j}_{r-3},\mathbf{j}_{r-2}}p^r_{\mathbf{j}_{r-2}} \right)\right)\dots\right)\right)\right),
\end{split}
\end{equation}
being $\Omega_l=\mathbf{j}_l+\{0,1\}^2$ with $1\leq l\leq r-3$. Consequently, we determine the values $C^{l}_{\mathbf{j},\mathbf{j}_1}$ with $r\leq l \leq 2r-2$, and $\mathbf{j}_1\in\mathbf{j}+\{0,1\}^2$, proving the following lemma.

\begin{lemma}\label{lemaauxiliar12d}
Let  $r\leq l \leq 2r-2$; $\xe\in [\vv{x}{1}{i_1-1},\vv{x}{1}{i_1}]\times [\vv{x}{2}{i_2-1},\vv{x}{2}{i_2}]$, and $0\leq j_1,j_2\leq (2r-2)-l$, if we denote as $C^l_{\mathbf{j},\mathbf{j}_1}$ with $\mathbf{j}_1\in\mathbf{j}+\{0,1\}^2$ the values which satisfy:
\begin{equation}
p^{l+1}_{\mathbf{j}}(\xe)=\sum_{\mathbf{j}_1\in\mathbf{j}+\{0,1\}^2}C^l_{\mathbf{j},\mathbf{j}_1}(\xe)p_{\mathbf{j}_1}^{l}(\xe),
\end{equation}
then
\begin{equation*}
\begin{split}
&C^l_{\mathbf{j},\mathbf{j}+(0,0)}(\xe)=\left(\frac{x^*_1-\vv{x}{1}{i_1-r+j_1+l+1}}{\vv{x}{1}{i_1-r+j_1}-\vv{x}{1}{i_1-r+j_1+l+1}}\right)\left(\frac{x^*_2-\vv{x}{2}{i_2-r+j_2+l+1}}{\vv{x}{2}{i_2-r+j_2}-\vv{x}{2}{i_2-r+j_2+l+1}}\right)=C^l_{j_1,j_1}(x^*_1)C^l_{j_2,j_2}(x^*_2),\\
& C^l_{\mathbf{j},\mathbf{j}+(1,0)}(\xe)=\left(\frac{\vv{x}{1}{i_1-r+j_1}-x^*_1}{\vv{x}{1}{i_1-r+j_1}-\vv{x}{1}{i_1-r+j_1+l+1}}\right)\left(\frac{x^*_2-\vv{x}{2}{i_2-r+j_2+l+1}}{\vv{x}{2}{i_2-r+j_2}-\vv{x}{2}{i_2-r+j_2+l+1}}\right)=C^l_{j_1,j_1+1}(x^*_1)C^l_{j_2,j_2}(x^*_2),\\
&C^l_{\mathbf{j},\mathbf{j}+(0,1)}(\xe)=\left(\frac{x^*_1-\vv{x}{1}{i_1-r+j_1+l+1}}{\vv{x}{1}{i-r+j_1}-\vv{x}{1}{i_1-r+j_1+l+1}}\right)\left(\frac{\vv{x}{2}{i_2-r+j_2}-x^*_2}{\vv{x}{2}{i_2-r+j_2}-\vv{x}{2}{i_2-r+j_2+l+1}}\right)=C^l_{j_1,j_1}(x^*_1)C^l_{j_2,j_2+1}(x^*_2), \\ &C^l_{\mathbf{j},\mathbf{j}+(1,1)}(\xe)=\left(\frac{\vv{x}{1}{i-r+j_1}-x^*_1}{\vv{x}{1}{i-r+j_1}-\vv{x}{1}{i_1-r+j_1+l+1}}\right)\left(\frac{\vv{x}{2}{i_2-r+j_2}-x^*_2}{\vv{x}{2}{i_2-r+j_2}-\vv{x}{2}{i_2-r+j_2+l+1}}\right)=C^l_{j_1,j_1+1}(x^*_1)C^l_{j_2,j_2+1}(x^*_2).
\end{split}
\end{equation*}
\end{lemma}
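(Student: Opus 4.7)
The key observation is that each polynomial $p^{l+1}_{\mathbf{j}}$ interpolates the data on a Cartesian product stencil $(\mathcal{S}^{l+1}_{j_1})_1\times(\mathcal{S}^{l+1}_{j_2})_2$, so the 2D Lagrange interpolant admits the tensor product representation
\[
p^{l+1}_{\mathbf{j}}(x_1,x_2)=\sum_{\alpha=0}^{l+1}\sum_{\beta=0}^{l+1} f_{\alpha,\beta}\,\ell^{(1),l+1}_{j_1,\alpha}(x_1)\,\ell^{(2),l+1}_{j_2,\beta}(x_2),
\]
where $\ell^{(k),l+1}_{j_k,\cdot}$ denotes the 1D Lagrange basis associated with the $k$-th factor stencil. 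My plan is to exploit this separation of variables and reduce the claim to two applications of the univariate Lemma \ref{lemaauxiliar1general1d}, one in each coordinate.

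First I would fix $x_2$ to a grid node of the $(\mathcal{S}^{l+1}_{j_2})_2$ factor and view the $x_1$-slice as a univariate interpolant on $\{\vv{x}{1}{i_1-r+j_1},\dots,\vv{x}{1}{i_1-r+j_1+l+1}\}$. Lemma \ref{lemaauxiliar1general1d} applied in the $x_1$ variable then rewrites this slice, evaluated at $x_1^*$, as the combination $C^l_{j_1,j_1}(x_1^*)\cdot(\cdot)+C^l_{j_1,j_1+1}(x_1^*)\cdot(\cdot)$ of two $l$-degree interpolants on the left and right $x_1$-subsstencils. Because the weights $C^l_{j_1,\cdot}(x_1^*)$ are independent of $\beta$, I can carry this identity through the $x_2$-summation; equivalently, the bivariate interpolant $p^{l+1}_{\mathbf{j}}$ evaluated at $(x_1^*,x_2)$ splits as
\[
p^{l+1}_{\mathbf{j}}(x_1^*,x_2)=C^l_{j_1,j_1}(x_1^*)\,q_{j_1}(x_2)+C^l_{j_1,j_1+1}(x_1^*)\,q_{j_1+1}(x_2),
\]
where each $q_{\cdot}(x_2)$ is the bivariate Lagrange interpolant on the Cartesian stencil whose $x_1$-factor has been shrunk to $l+1$ points and whose $x_2$-factor is still $(\mathcal{S}^{l+1}_{j_2})_2$.

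Next I would apply the same univariate lemma in the $x_2$ direction to each $q_{\cdot}(x_2^*)$, producing the decomposition
\[
q_{j_1+\delta}(x_2^*)=C^l_{j_2,j_2}(x_2^*)\,p^l_{(j_1+\delta,j_2)}(\xe)+C^l_{j_2,j_2+1}(x_2^*)\,p^l_{(j_1+\delta,j_2+1)}(\xe),\quad \delta=0,1,
\]
since shrinking the $x_2$-factor to the left or right $(l+1)$-point substencil yields exactly the stencils defining $p^l_{\mathbf{j}_1}$ for $\mathbf{j}_1\in\mathbf{j}+\{0,1\}^2$. Substituting back and multiplying the coordinatewise weights gives the four tensor-product expressions $C^l_{j_1,j_1+\delta_1}(x_1^*)\,C^l_{j_2,j_2+\delta_2}(x_2^*)$ for $(\delta_1,\delta_2)\in\{0,1\}^2$, which are precisely the claimed formulas once Eq. \eqref{pesostodosgeneral1d} is used to expand $C^l_{j_k,j_k+\delta_k}(x_k^*)$ in the quotient form.

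The only delicate step is bookkeeping: one must check that the sub-stencils produced by shrinking the factor stencils in each direction are exactly the ones appearing in the definition of $p^l_{\mathbf{j}_1}$ for $\mathbf{j}_1\in\mathbf{j}+\{0,1\}^2$. This is routine given the definition of $\mathcal{S}^l_{\mathbf{j}_1}$ as a Cartesian product of 1D stencils, but it is the one point where care with indices is essential. The main obstacle, then, is not any nontrivial calculation but rather verifying that the order of partial interpolation (first in $x_1$, then in $x_2$) is valid; this follows from uniqueness of bivariate polynomial interpolation on tensor-product stencils, so the two orderings produce the same polynomial and the argument is consistent.
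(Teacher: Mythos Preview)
Your argument is correct. The paper's own proof is a one-line appeal to the multivariate Aitken--Neville formula of Gasca and Lebr\'on, after identifying the tensor-product stencils of $p^{l+1}_{\mathbf{j}}$ and the four $p^{l}_{\mathbf{j}_1}$. You instead reduce the 2D statement to two successive applications of the 1D Lemma~\ref{lemaauxiliar1general1d}, one in each coordinate, using the separation-of-variables structure of tensor-product Lagrange interpolation. The underlying idea is the same---the multivariate Aitken--Neville formula on Cartesian grids \emph{is} the product of the univariate ones---but your route is more self-contained: it relies only on what the paper has already proved rather than on an external reference, and it makes explicit why the 2D weights factor as products $C^l_{j_1,j_1+\delta_1}(x_1^*)C^l_{j_2,j_2+\delta_2}(x_2^*)$. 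Your remark about uniqueness of tensor-product interpolation justifying the order of the two partial decompositions is the right way to close the loop.
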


\begin{proof}
The proof is direct, considering that the stencils used to obtain each interpolant are
$\{\vv{x}{1}{i_1-r+j_1},\dots,\vv{x}{1}{i_1-r+j_1+l+1}\}\times \{\vv{x}{2}{i_2-r+j_2},\dots,\vv{x}{2}{i_2-r+j_2+l+1}\}$ for $p_{\mathbf{j}}^{l+1}$, and
$$\{\vv{x}{1}{i_1-r+(j_1)_1},\dots,\vv{x}{1}{i_1-r+(j_1)_1+l}\}\times \{\vv{x}{2}{i_2-r+(j_2)_1},\dots,\vv{x}{2}{i_2-r+(j_2)_1+l}\}$$ for $p_{\mathbf{j}_1}^{l}$, with $\mathbf{j}_1 \in \mathbf{j}+\{0,1\}^2$, and the Aitken-Neville formula, see \cite{gascaaitkenclasica}.
\end{proof}


Note that if our data are equally spaced and we approximate at the mid-points, we recover the result showed in Cor. \ref{lemaauxiliar1}.
\begin{corollary}
Let  $r\leq l \leq 2r-2$, and $0\leq l_1,l_2\leq (2r-2)-l$, and $\vv{x}{1}{i_1}=a+k_1h_1, \,h_1=(b_1-a_1)/J_1$, $k_1=0,\hdots,J_1$, and $\vv{x}{2}{k_2}=a_2+k_2h_2, \, h_2=(b_2-a_2)/J_2$, $k_2=0,\hdots,J_2$,
if we denote as $\mathbf{x}_{\frac{1}{2}}=(\vv{x}{1}{i_1-1/2},\vv{x}{2}{i_2-1/2})$, and $C^l_{\mathbf{j},\mathbf{j}_1}(\mathbf{x}_{\frac{1}{2}})$ the values which satisfy:
\begin{equation}
p^{l+1}_{\mathbf{j}}(\mathbf{x}_{\frac{1}{2}})=\sum_{\mathbf{j}_1\in\mathbf{j}+\{0,1\}^2}C^l_{\mathbf{j},\mathbf{j}_1}(\mathbf{x}_{\frac{1}{2}})p_{\mathbf{j}_1}^{l}(\mathbf{x}_{\frac{1}{2}}),
\end{equation}
then, if we denote as $C^l_{\mathbf{j},\mathbf{j}_1}:=C^l_{\mathbf{j},\mathbf{j}_1}(\mathbf{x}_{\frac{1}{2}})$, we get
\begin{equation*}
\begin{split}
&C^l_{\mathbf{j},\mathbf{j}}=C^l_{j_1,j_1}C^l_{j_2,j_2}, \quad C^l_{\mathbf{j},\mathbf{j}+(1,0)}=C^l_{j_1,j_1+1}C^l_{j_2,j_2},\quad C^l_{\mathbf{j}+(0,1)}=C^l_{j_1,j_1}C^l_{j_2,j_2+1}, \quad C^l_{\mathbf{j}+(1,1)}=C^l_{j_1,j_1+1}C^l_{j_2,j_2+1},
\end{split}
\end{equation*}
being $C^l_{j_i,j_i}$, and $C^l_{j_i,j_i+1}$, $i=1,2$ the weights defined in Eq. \eqref{pesostodos}.
\end{corollary}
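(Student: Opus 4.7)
The corollary is a direct specialization of Lemma \ref{lemaauxiliar12d} to the uniform grid, evaluated at the midpoint. My plan is therefore to (i) invoke the lemma to obtain the product factorization of $C^l_{\mathbf{j},\mathbf{j}_1}(\xe)$ into one factor per coordinate axis, and (ii) show that each univariate factor, when $\xe=\mathbf{x}_{\frac{1}{2}}$ and the grid spacing is constant, coincides with the expressions $C^l_{k,k}$ and $C^l_{k,k+1}$ displayed in Eq.~\eqref{pesostodos}.

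First I would start from the explicit formulas in Lemma \ref{lemaauxiliar12d}, which already display the desired tensor-product form $C^l_{\mathbf{j},\mathbf{j}+(\delta_1,\delta_2)}(\xe)=C^l_{j_1,j_1+\delta_1}(x_1^*)\,C^l_{j_2,j_2+\delta_2}(x_2^*)$ for $(\delta_1,\delta_2)\in\{0,1\}^2$, where the one-dimensional factors are those of Eq.~\eqref{pesostodosgeneral1d}. Hence the product structure claimed in the corollary is immediate and requires no further work; the only remaining content is the identification of these one-dimensional factors with \eqref{pesostodos}.

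Next, I would specialize coordinate by coordinate. Fix $i\in\{1,2\}$ and substitute $\vv{x}{i}{k}=a_i+kh_i$ and $x_i^*=a_i+(i_i-\tfrac12)h_i$ into the formula $C^l_{j,j}(x^*)=(x^*-x_{i-r+j+l+1})/(x_{i-r+j}-x_{i-r+j+l+1})$ of Eq.~\eqref{pesostodosgeneral1d}. A short calculation (the $h_i$'s cancel) yields
\begin{equation*}
C^l_{j_i,j_i}(x_i^*)=\frac{(i_i-\tfrac12)-(i_i-r+j_i+l+1)}{-(l+1)}=\frac{2(l-r+j_i+1)+1}{2(l+1)},
\end{equation*}
which is exactly the expression in Eq.~\eqref{pesostodos} with $k=j_i$. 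The companion identity $C^l_{j_i,j_i+1}(x_i^*)=(2(r-j_i)-1)/(2(l+1))$ then follows from the partition of unity $C^l_{j_i,j_i}(x_i^*)+C^l_{j_i,j_i+1}(x_i^*)=1$ noted in Eq.~\eqref{pesostodosgeneral1d}, and is precisely the second formula of \eqref{pesostodos}. This step is identical to the proof of Corollary~\ref{lemaauxiliar1}, so one may alternatively just cite that corollary in each factor.

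Combining the two steps, the four cases of the corollary are obtained by forming the four products $C^l_{j_1,j_1+\delta_1}\,C^l_{j_2,j_2+\delta_2}$ with $(\delta_1,\delta_2)\in\{0,1\}^2$. I do not foresee any genuine obstacle: the factorization is built into the tensor-product construction of the Aitken--Neville procedure of \cite{gascaaitkenclasica}, and the reduction of each 1D weight is the same algebraic simplification already carried out in Corollary~\ref{lemaauxiliar1}. The only minor bookkeeping point is keeping the sign conventions straight when passing from $(\vv{x}{i}{i_i-r+j_i}-x_i^*)$ to $(x_i^*-\vv{x}{i}{i_i-r+j_i})$ in the $\delta_i=1$ factors, but this is routine.
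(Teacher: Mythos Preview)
Your proposal is correct and follows the same approach as the paper, which simply states that the result is clear from Lemma~\ref{lemaauxiliar12d}. You have merely spelled out the routine specialization (invoking the tensor-product factorization from the lemma and then reducing each one-dimensional factor via the computation of Corollary~\ref{lemaauxiliar1}) that the paper leaves implicit.
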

\begin{proof}
It is clear from Lemma \ref{lemaauxiliar12d}.
\end{proof}

Now, we determine the iterative process as Eq. \eqref{formulapep}
\begin{equation*}
\begin{split}
&  \tilde{p}^{r}_{\mathbf{j}_{r-2}}(\xe)=p^{r}_{\mathbf{j}_{r-2}}(\xe), \quad \mathbf{j}_{r-2}=\{0,\hdots,r-1\}^2,\\
&  \tilde{p}^{l+1}_{\mathbf{j}_{2r-l-3}}(\xe)=\sum_{\mathbf{j}_{2r-l-2}\in\mathbf{j}_{2r-l-3}+\{0,1\}^2}
  \tilde{\omega}^{l}_{\mathbf{j}_{2r-l-3},\mathbf{j}_{2r-l-2}}(x^*)\tilde{p}_{\mathbf{j}_{2r-l-2}}^{l}(x^*), \quad
    l=r,\dots,2r-2, \, \, \mathbf{j}_{2r-l-3}\in\{0,\dots,2r-2-l\}^2,\\
\end{split}
\end{equation*}
for $l=r,\dots,2r-2$, and $\mathbf{k} \in \{0,\hdots,2r-2-l\}^2$, and $\mathbf{k}_1\in\mathbf{k}+\{0,1\}^2$:
\begin{equation}\label{pesosr2d}
\begin{split}
&\tilde{\omega}^l_{\mathbf{k},\mathbf{k}_1}(\xe)=\left(\frac{\tilde{\alpha}_{\mathbf{k},\mathbf{k}_1}^l}{\tilde{\alpha}_{\mathbf{k},\mathbf{k}}^l+\tilde{\alpha}_{\mathbf{k},\mathbf{k}+(1,0)}^l+\tilde{\alpha}_{\mathbf{k},\mathbf{k}+(0,1)}^l+\tilde{\alpha}_{\mathbf{k},\mathbf{k}+(1,1)}^l}\right)(\xe),\\ &\tilde{\alpha}_{\mathbf{k},\mathbf{k}_1}^l(\xe)=\frac{{C}_{\mathbf{k},\mathbf{k}_1}^l(\xe)}{(\epsilon+I^l_{\mathbf{k},\mathbf{k}_1})^t}, \quad \mathbf{k}_1\in \mathbf{k}+\{0,1\}^2,
\end{split}
\end{equation}
where $ I^l_{\mathbf{k},\mathbf{k}_1}$ are the smoothness indicators determined by the following formula:
\begin{equation}\label{indices}
{I}^l_{\mathbf{k},\mathbf{k}_1}=\left\{
                       \begin{array}{ll}
                         I^r_{\mathbf{k}}, & \hbox{if } \mathbf{k}_1=\mathbf{k}, \\
                         I^r_{\mathbf{k}+(l-(r-1),0)}, & \hbox{if } \mathbf{k}_1=\mathbf{k}+(1,0),\\
                         I^r_{\mathbf{k}+(0,l-(r-1))}, & \hbox{if } \mathbf{k}_1=\mathbf{k}+(0,1),\\
                         I^r_{\mathbf{k}+(l-(r-1),l-(r-1))}, & \hbox{if } \mathbf{k}_1=\mathbf{k}+(1,1),
                       \end{array}
                     \right.
\end{equation}
being $I^r_{\mathbf{k}}$, with $0\leq k_1,k_2\leq r-1$, smoothness indicators satisfying the properties \ref{P1sm}, \ref{P2sm}, and \ref{P3sm}.

Finally, the new bivariate progressive WENO approximation is
$$\tilde{\mathcal{I}}^{2r-1}\left(\xe;f\right)=\tilde{p}^{2r-1}_{(0,0)}(\xe).$$

\section{A new progressive multivariate WENO method}\label{wenond}

In this section we generalize the method designed \dioni{}{in Section \ref{wenoprogresivo1d} for $1d$, and in Section \ref{weno2dgeneralcase} for $2d$}. We follow the same steps: First of all, we consider the data as in Section \ref{linealnd}, i.e., if $\prod_{j=1}^n [a_j,b_j]$ is an hypercube and
$$a_j=\vv{x}{j}{0}<\vv{x}{j}{1}<\vv{x}{j}{2}<\hdots< \vv{x}{j}{J_j}=b_j, \quad j=1,\hdots,n,$$
are the points of a non-regular grid, we suppose our data as the evaluation of a unknown function at these points
$$f_{(l_1,\hdots,l_n)}=f(\vv{x}{1}{l_1},\vv{x}{2}{l_2},\hdots,\vv{x}{n}{l_n}), \quad 0\leq l_j\leq J_j,\quad 1\leq j\leq n.$$
Let  $(i_1,\hdots,i_n)\in\mathbb{N}^n$ and $r\in\mathbb{N}$ be such that $0\leq i_j-r \leq i_j+r-1\leq J_j$, $j=1,\hdots,n$, a point $\xe \in \prod_{j=1}^n [\vv{x}{j}{i_j-1},\vv{x}{j}{i_j}]$, and the centered stencil:
\begin{equation}
\mathcal{S}_{\mathbf{0}}^{2r}=\prod_{j=1}^n (\mathcal{S}_{0}^{2r})_j,
\end{equation}
with $\mathbf{0}=(0,\hdots,0)$. We compute the interpolatory polynomial of degree $2r-1$ with nodes $\mathcal{S}_{\mathbf{0}}^{2r}$, $p^{2r-1}_{\mathbf{0}}(\xe)$, and we express it as the combination of $2^n$ polynomials of degree $2r-2$. Thus, we get
\begin{equation}
p^{2r-1}_{\mathbf{0}}(\xe)=\sum_{\mathbf{j}_0\in\{0,1\}^n}C^{2r-2}_{\mathbf{0},\mathbf{j}_0}(\xe)p^{2r-2}_{\mathbf{j}_0}(\xe).
\end{equation}
Now, we repeat this process up to polynomials of degree $r$ (for simplicity remove the dependence of $\xe$), we denote as $\Omega_l=\mathbf{j}_l+\{0,1\}^n$:
\begin{equation}
\begin{split}
p^{2r-1}_{\mathbf{0}}&=\sum_{\mathbf{j}_0\in\{0,1\}^n}C^{2r-2}_{\mathbf{0},\mathbf{j}_0}p^{2r-2}_{\mathbf{j}_0}\\
&=\sum_{\mathbf{j}_0\in\{0,1\}^n}C^{2r-2}_{\mathbf{0},\mathbf{j}_0}\left(\sum_{\mathbf{j}_1\in \Omega_0}C^{2r-3}_{\mathbf{j}_0,\mathbf{j}_1} p^{2r-3}_{\mathbf{j}_1}\right)\\
&=\sum_{\mathbf{j}_0\in\{0,1\}^n}C^{2r-2}_{\mathbf{0},\mathbf{j}_0}\left(\sum_{\mathbf{j}_1\in \Omega_0}C^{2r-3}_{\mathbf{j}_0,\mathbf{j}_1} \left(\sum_{\mathbf{j}_2\in \Omega_1}C^{2r-4}_{\mathbf{j}_1,\mathbf{j}_2}p^{2r-4}_{\mathbf{j}_2}\right)\right)\\
&=\sum_{\mathbf{j}_0\in\{0,1\}^n}C^{2r-2}_{\mathbf{0},\mathbf{j}_0}\left(\sum_{\mathbf{j}_1\in \Omega_0}C^{2r-3}_{\mathbf{j}_0,\mathbf{j}_1} \left(\hdots\left( \sum_{\mathbf{j}_{r-2}\in \Omega_{r-3}}C^{r}_{\mathbf{j}_{r-3},\mathbf{j}_{r-2}}p^r_{\mathbf{j}_{r-2}}\right)\hdots\right)\right),\\
\end{split}
\end{equation}
with
$$C^{l}_{\mathbf{j},\mathbf{j}+\{0,1\}^n}=\prod_{i=1}^n C^{r}_{j_i,j_i+\{0,1\}},$$
being $C^l_{j_i,j_i}$, and $C^l_{j_i,j_i+1}$, $i=1,\hdots,n$ the weights defined in Eq. \eqref{pesostodos}.

We establish the recursive process as Eq. \eqref{formulapep}:
\begin{equation*}
\begin{split}
&  \tilde{p}^{r}_{\mathbf{j}_{r-2}}(\xe)=p^{r}_{\mathbf{j}_{r-2}}(\xe), \quad \mathbf{j}_{r-2}=\{0,\hdots,r-1\}^n,\\
&  \tilde{p}^{l+1}_{\mathbf{j}_{2r-l-3}}(\xe)=\sum_{\mathbf{j}_{2r-l-2}\in\mathbf{j}_{2r-l-3}+\{0,1\}^n}
  \tilde{\omega}^{l}_{\mathbf{j}_{2r-l-3},\mathbf{j}_{2r-l-2}}(x^*)\tilde{p}_{\mathbf{j}_{2r-l-2}}^{l}(x^*), \quad
    l=r,\dots,2r-2, \, \, \mathbf{j}_{2r-l-3}\in\{0,\dots,2r-2-l\}^n,\\
\end{split}
\end{equation*}
being for $l=r,\dots,2r-2$, and $\mathbf{k} \in\{0,\hdots,2r-2-l\}$, $\mathbf{k}_1\in\mathbf{k}+\{0,1\}^n$:
\begin{equation}\label{pesosr2d}
\begin{split}
\tilde{\omega}^l_{\mathbf{k},\mathbf{k}_1}(\xe)=\frac{\tilde{\alpha}_{\mathbf{k},\mathbf{k}_1}^l(\xe)}{\sum_{\mathbf{l}\in\{0,1\}^n}\tilde{\alpha}_{\mathbf{k},\mathbf{k}+\mathbf{l}}^l(\xe)},\quad \tilde{\alpha}_{\mathbf{k},\mathbf{k}_1}^l(\xe)=\frac{{C}_{\mathbf{k},\mathbf{k}_1}^l(\xe)}{(\epsilon+ I^l_{\mathbf{k},\mathbf{k}_1})^t}, \quad \mathbf{k}_1\in \mathbf{k}+\{0,1\}^n,
\end{split}
\end{equation}
where $I^l_{\mathbf{k},\mathbf{k}_1}$ are the smoothness indicators determined by the following formula:
\begin{equation}\label{indicesnd}
{I}^l_{\mathbf{k},\mathbf{k}_1}=I^r_{\mathbf{k}+(l-(r-1))\mathbf{v}},\quad  \text{if } \quad \mathbf{k}_1=\mathbf{k}+\mathbf{v},\quad \text{with} \quad \mathbf{v}\in\{0,1\}^n,
\end{equation}
being $I^r_{\mathbf{k}}$, with $0\leq k_j\leq r-1$, $j=1,\hdots,n$, smoothness indicators satisfying the properties \ref{P1sm}, \ref{P2sm}, and \ref{P3sm}.

Therefore, the approximation will be:
\begin{equation}\label{eqfinalndimensiones}
\tilde{\mathcal{I}}^{2r-1}(\xe)=\tilde{p}_{\mathbf{0}}^{2r-1}(\xe).
\end{equation}
Finally, we calculate the order of accuracy stating the next theorem.
\begin{theorem}\label{teo1_multivariate}
Let  $\mathbf{l}_0\in \{1,\hdots, r-1\}^n$, $l_0=\min_{j=1,\hdots,n} (\mathbf{l}_0)_j$, $\xe\in \prod_{j=1}^n [\vv{x}{j}{i_j-1},\vv{x}{j}{i_j}]$, and $\tilde{\mathcal{I}}^{2r-1}(\xe)$ the approximation defined in Eq. \eqref{eqfinalndimensiones}.  If $f$ is smooth in $\prod_{j=1}^n[\vv{x}{j}{i_j-r},\vv{x}{j}{i_j+r-1}]\setminus \Omega$, and $f$ has a discontinuity at $\Omega$ then
\begin{equation}
\tilde{\mathcal{I}}^{2r-1}(\xe)-{f}(\xe)=\left\{
                                                  \begin{array}{ll}
                                                    O(h^{2r}), & \hbox{if $\,\,\Omega=\emptyset$;} \\
O(h^{r+l_0}), & \hbox{if   $\,\,\Omega=\prod_{j=1}^n[\vv{x}{j}{i_j+(l_0)_j-1},\vv{x}{j}{i_j+(l_0)_j}]$. }\\
                                                  \end{array}
                                                \right.
\end{equation}
\end{theorem}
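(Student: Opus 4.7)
The plan is to mirror the induction used in the proof of Theorem \ref{teo1}, replacing one-dimensional stencils by their tensor products and the binary branching of the Aitken--Neville tree by its $2^n$-ary analogue. I would work upward through the levels $l=r,r+1,\dots,2r-1$ of the recursion, controlling the pointwise error $\tilde p^{l+1}_{\mathbf{k}}(\xe)-f(\xe)$ at each level by combining the ordinary tensor-product Lagrange bound $O(h^{l+2})$ on $\mathcal{S}^{l+1}_{\mathbf{k}}$ (cf.\ Eq.\ \eqref{errorfuncion}) with the behaviour of the non-linear weights \eqref{pesosr2d}. Through a multivariate analogue of Proposition \ref{prop1}, these weights either reproduce the optimal linear weights up to $O(h^{r-1})$ when none of the $2^n$ indicators $I^l_{\mathbf{k},\mathbf{k}_1}$ is polluted, or concentrate their mass on the unpolluted sub-stencils with deficit $O(h^{2t})$ when some are.

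For the case $\Omega=\emptyset$, every indicator satisfies \ref{P1sm}--\ref{P2sm}, so the first regime applies at every node of the tree; the recursion then reduces to the classical multivariate Aitken--Neville representation of $p^{2r-1}_{\mathbf{0}}(\xe)$ up to negligible perturbations, and \eqref{errorfuncion} yields the optimal $O(h^{2r})$ bound.

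For the discontinuous case $\Omega = \prod_{j=1}^n [\vv{x}{j}{i_j+(l_0)_j-1},\vv{x}{j}{i_j+(l_0)_j}]$, I would call a stencil $\mathcal{S}^{l+1}_{\mathbf{k}}$ \emph{clean} when there exists some coordinate direction $j$ in which the one-dimensional factor of the stencil avoids the one-dimensional factor of $\Omega$; because both are tensor products, cleanness is preserved when descending to any corner sub-stencil that drops the critical index on the discontinuity side in this favourable direction. I would then prove by induction on $l$ that if $\mathcal{S}^{l+1}_{\mathbf{k}}$ is clean, then $\tilde p^{l+1}_{\mathbf{k}}(\xe)-f(\xe) = O(h^{l+2})$, and that whenever $\mathcal{S}^{l+1}_{\mathbf{0}}$ contains $\Omega$ its $2^n$ corner sub-stencils split into a nonempty family of clean ones (carrying total renormalised weight $1+O(h^{2t})$) and a polluted remainder of weight $O(h^{2t})$. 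The central stencil $\mathcal{S}^{l+1}_{\mathbf{0}}$ ceases to possess a clean corner sub-stencil exactly when every coordinate direction $j$ has been forced to cover the one-dimensional discontinuity interval on both sides, which first occurs at $l+1=r+l_0$ in the bottleneck direction $j^\star$ achieving the minimum $l_0$. Stopping the induction at this level yields $\tilde{\mathcal{I}}^{2r-1}(\xe;f)-f(\xe) = O(h^{r+l_0})$.

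The main obstacle is the multivariate version of Proposition \ref{prop1}: one must show that the $2^n$-way weight comparison concentrates mass on the clean corners with a deficit of only $O(h^{2t})$ and that the renormalised weights on those corners themselves approximate their conditional optimal counterparts up to $O(h^{r-1})$. The tensor-product form $C^l_{\mathbf{k},\mathbf{k}_1}=\prod_j C^l_{k_j,(k_1)_j}$ of Lemma \ref{lemaauxiliar12d} and the tensor relabelling \eqref{indicesnd} should reduce this to an $n$-fold iteration of the scalar argument behind Proposition \ref{prop1}; however, the combinatorial bookkeeping of which directions remain clean as one climbs the tree, together with the precise order in which coordinate bottlenecks are exhausted, is the delicate step where the tensor-product monotonicity observation above must be applied carefully, together with a symmetry argument analogous to the one used in the proof of Theorem \ref{teo1} to handle the mirror discontinuity locations.
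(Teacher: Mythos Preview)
The paper does not actually supply a proof of Theorem~\ref{teo1_multivariate}: the statement is followed immediately by Section~\ref{smoothaccur} on smoothness indicators, with no argument given. (The same is essentially true of the one-dimensional Theorem~\ref{teo1}, where only a one-sentence symmetry remark appears; the substantive work is deferred to the reference~\cite{ARSY20}.) So there is no ``paper proof'' to compare your proposal against.

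That said, your outline is precisely the strategy one would expect the authors to have in mind: induct up the $2^n$-ary Aitken--Neville tree, use a tensor-product analogue of Proposition~\ref{prop1} to control the weights at each node, and exploit the fact that a tensor-product sub-stencil avoids the tensor-product discontinuity box $\Omega$ as soon as it is disjoint from $\Omega$ in a single coordinate. Your identification of the bottleneck direction $j^\star$ realising $l_0=\min_j(\mathbf{l}_0)_j$ as the level at which the central chain first loses all clean corners is the correct combinatorial observation. One point you gloss over and should make explicit in a full write-up is what happens \emph{after} that level: the recursion does not stop at $l+1=r+l_0$ but continues to $2r-1$, and at those higher levels the corner indicators at $\mathbf{0}$ may all be polluted (case~4 of Proposition~\ref{prop1}). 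You therefore need to check that the convex combinations at those levels preserve the $O(h^{r+l_0})$ bound already secured, and that the $O(h^{2t})$ contribution from polluted corners does not dominate---this is where the choice of the exponent $t$ enters and must be large enough. The paper is silent on this detail as well.
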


\section{Smoothness indicators}\label{smoothaccur}
In this section, we present some smoothness indicators which satisfy the above mentioned properties. We generalize the smoothness indicators introduced by Ar\`andiga et al. in \cite{arandigamuletrenau} which are an adaptation of the ones presented in \cite{ABBM}. The idea is to {design} some functionals that fulfill \ref{P1sm}, \ref{P2sm}, and \ref{P3sm}.

Given $\mathbf{x}^*\in\mathbb{R}^n$, $h_1,\dots,h_n>0$,  $\Phi(\mathbf{s})=\mathbf{x}^*+(s_1h_1,\dots,s_nh_n)$, $f$ sufficiently smooth
\begin{align*}
  &\int_{\Phi([0,1]^n)} (f\circ \Phi^{-1})^{(\boldsymbol{l})}(\mathbf{x})^2d\mathbf{x}=
  \int_{\Phi([0,1]^n)} \big(f^{(\boldsymbol{l})}(\Phi^{-1}(\mathbf{x}))h_1^{-l_1}\dots h_n^{-l_n}\big)^2d\mathbf{x}
  \\
  &=h_1^{-2l_1}\dots h_n^{-2l_n}  \int_{\Phi([0,1]^n)} f^{(\boldsymbol{l})}(\Phi^{-1}(\mathbf{x}))d\mathbf{x}
  =h_1^{-2l_1}\dots h_n^{-2l_n}  \int_{[0,1]^n} f^{(\boldsymbol{l})}(\mathbf{s}) h_1\dots h_nd\mathbf{s}
  \\
  &=h_1^{-2l_1+1}\dots h_n^{-2l_n+1}  \int_{[0,1]^n} f^{(\boldsymbol{l})}(\mathbf{s})d\mathbf{s}.
\end{align*}
Hence, we define
\begin{equation}\label{eq1indicator}
I_\Gamma(f)=\sum_{\mathbf{l}\in\mathcal{J}}{h_1^{2l_1-1}\dots h_n^{2l_n-1}}\int_{\Gamma} (f^{\mathbf{l})}(\mathbf{x}))^2d\mathbf{x},
\end{equation}
where $\Gamma=\prod_{j=1}^n [\vv{x}{j}{i_j-1},\vv{x}{j}{i_j}]$, $\mathcal{J}=\{0,1,\hdots,r\}^n\setminus \{\mathbf{0}\}$ and
  smoothness indicators
  \begin{align}\label{eq1indicator1}
I_\mathbf{k}^r=    I_\Gamma(p^r_\mathbf{k}),\quad \mathbf{k} \in \{0,1,\hdots,r-1\}^n,
  \end{align}
  to get an expression which is scale independent.
\begin{theorem}
The smoothness indicators defined in Eq. (\ref{eq1indicator}-\ref{eq1indicator1})
satisfy  properties \ref{P1sm}, \ref{P2sm}, and \ref{P3sm}.
\end{theorem}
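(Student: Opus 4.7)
The plan is to exploit the scale invariance of the functional $I_\Gamma$: the affine change of variables $\Phi(\mathbf{s})=\mathbf{x}^*+(s_1h_1,\dots,s_nh_n)$ used in the derivation of \eqref{eq1indicator} lets me rewrite
\begin{equation*}
I_\mathbf{k}^r=\sum_{\mathbf{l}\in\mathcal{J}}\int_{[0,1]^n}\bigl((\tilde p_\mathbf{k}^r)^{(\mathbf{l})}(\mathbf{s})\bigr)^2\,d\mathbf{s},\qquad \tilde p_\mathbf{k}^r:=p_\mathbf{k}^r\circ\Phi,
\end{equation*}
on a \emph{fixed} reference cube. The chain rule yields $(\tilde p_\mathbf{k}^r)^{(\mathbf{l})}(\mathbf{s})=h_1^{l_1}\cdots h_n^{l_n}\,(p_\mathbf{k}^r)^{(\mathbf{l})}(\Phi(\mathbf{s}))$, and likewise for $\tilde f:=f\circ\Phi$, so that smoothness of $f$ translates into $\tilde f^{(\mathbf{l})}=O(h^{|\mathbf{l}|})$ on $[0,1]^n$. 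All three properties then reduce to estimates for tensor-product polynomials of coordinate degree at most $r$ on a compact reference cube, where the equivalence of $C^0$- and $C^m$-norms on a finite-dimensional polynomial space is freely available.

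For \ref{P1sm}, the multivariate Lagrange bound analogous to \eqref{errorderivadas} applied to the $(r+1)^n$-node interpolant $p_\mathbf{k}^r$ gives $(p_\mathbf{k}^r)^{(\mathbf{l})}=f^{(\mathbf{l})}+O(h^{r+1-\max_i l_i})=O(1)$ whenever $f$ is smooth on $\mathcal{S}^r_\mathbf{k}$. Multiplying by $h_1^{l_1}\cdots h_n^{l_n}$ produces $(\tilde p_\mathbf{k}^r)^{(\mathbf{l})}=O(h^{|\mathbf{l}|})$, so each term of $I_\mathbf{k}^r$ is $O(h^{2|\mathbf{l}|})$ and the whole sum is dominated by the multi-indices with $|\mathbf{l}|=1$, yielding $I_\mathbf{k}^r=O(h^2)$. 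For \ref{P2sm}, the difference $p_\mathbf{k}^r-p_{\mathbf{k}'}^r=(f-p_{\mathbf{k}'}^r)-(f-p_\mathbf{k}^r)$ is $O(h^{r+1})$ in $C^0$ and $O(h^{r+1-\max_i l_i})$ in the $\mathbf{l}$-th derivative, so after rescaling $(\tilde p_\mathbf{k}^r-\tilde p_{\mathbf{k}'}^r)^{(\mathbf{l})}=O(h^{|\mathbf{l}|+r+1-\max_i l_i})\subset O(h^{r+1})$. Using $a^2-b^2=(a-b)(a+b)$ with $a+b=O(h^{|\mathbf{l}|})$ then gives $I_\mathbf{k}^r-I_{\mathbf{k}'}^r=O(h^{r+1+|\mathbf{l}|})\subset O(h^{r+1})$, as needed.

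The delicate property is \ref{P3sm}. When a discontinuity crosses $\mathcal{S}^r_\mathbf{k}$, two interpolation nodes $\mathbf{y},\mathbf{y}'$ lie on opposite sides of $\Omega$ with $|f(\mathbf{y})-f(\mathbf{y}')|\ge\Delta>0$ bounded below as $h\to 0$, and $p_\mathbf{k}^r(\mathbf{y})=f(\mathbf{y})$, $p_\mathbf{k}^r(\mathbf{y}')=f(\mathbf{y}')$. Pulling back to $[0,1]^n$, $\tilde p_\mathbf{k}^r$ is forced to vary by at least $\Delta$ between two points that remain at positive distance in the reference cube, so it cannot tend to a constant. A compactness argument on the finite-dimensional space of tensor-product polynomials of coordinate degree at most $r$ then produces a constant $c>0$ independent of $h$ such that $\sum_{\mathbf{l}\in\mathcal{J}}\|(\tilde p_\mathbf{k}^r)^{(\mathbf{l})}\|_{L^2([0,1]^n)}^2\ge c\Delta^2$, whence $I_\mathbf{k}^r\not\to 0$. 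The main technical obstacle is to secure this compactness uniformly in $h$: the rescaled interpolation nodes must stay in a compact set \emph{and} remain pairwise separated, which amounts to the (implicit, standing) quasi-uniformity of the mesh family, without which nodes could collide as $h\to 0$ and the norm-equivalence constants could degenerate.
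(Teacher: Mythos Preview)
Your argument is correct and takes essentially the same route as the paper: both derive \ref{P1sm} and \ref{P2sm} from the interpolation-error estimate \eqref{errorderivadas} combined with the factorization $a^2-b^2=(a-b)(a+b)$, the only cosmetic differences being your rescaling to the reference cube and the paper's use of $I_\Gamma(f)$ as an intermediate comparison point for \ref{P2sm}. Your treatment of \ref{P3sm} via norm equivalence on a finite-dimensional polynomial space is actually more thorough than the paper's one-line assertion, and your flagging of mesh quasi-uniformity as a tacit hypothesis is well observed.
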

\begin{proof}
  Since $|\Gamma|=h_1\hdots h_n$,
it is clear that if $f$ is smooth in $\Gamma$, then
$$I_\Gamma(f)=O(h^2).$$
We suppose that $p$ is a polynomial interpolant at a stencil of $(r+1)^n$ points, then
\begin{equation}
\begin{aligned}
  I_\Gamma(f)-I_\Gamma(p)&=\sum_{\mathbf{l}\in\mathcal{J}}{h_1^{2l_1-1}\dots h_n^{2l_n-1}}\int_{\Gamma} ((f^{\mathbf{l})}(\mathbf{x}))^2-(p^{\mathbf{l})}(\mathbf{x}))^2)d\mathbf{x}
  \\
  &=\sum_{\mathbf{l}\in\mathcal{J}}{h_1^{2l_1-1}\dots h_n^{2l_n-1}}\int_{\Gamma} E^{\mathbf{l})}(\mathbf{x})(2f^{\mathbf{l})}(\mathbf{x})-E^{\mathbf{l})}(\mathbf{x}))d\mathbf{x}\\
&=\sum_{\mathbf{l}\in\mathcal{J}}{h_1^{2l_1}\dots h_n^{2l_n}}O(h^{r+1-||\mathbf{l}||_\infty})\\
&=O(h^{r+1}).
\end{aligned}
\end{equation}
where $E$ is defined in Eq. \eqref{errorfuncion}, and $||\mathbf{l}||_\infty=\max_{j=1,\hdots,n}|l_j|$.
Then, if $\mathbf{k},\mathbf{k}'\in \{0,\hdots,r-1\}^n$, we get:
$$I_\Gamma(p_\mathbf{k}^r)-I_\Gamma(p_{\mathbf{k}'}^r)=I_\Gamma(p_\mathbf{k}^r)-I_\Gamma(f)+I_\Gamma(f)-I_\Gamma(p_{\mathbf{k}'}^r)=O(h^{r+1}).$$
Finally, if a discontinuity crosses the stencil $\mathcal{S}^r_{\mathbf{k}}$ then:
$$I^r_{\mathbf{k}}\nrightarrow 0 \,\, \text{as}\,\, h\to 0,$$
{since some of the quadratic terms of $I^r_{\mathbf{k}}$ will not converge to $0$.}
\end{proof}
\begin{remark}
An evaluation of these smoothness indicators for $2d$ in gridded data is shown in \cite{arandigamuletrenau}. Also, some adaptation to smoothness indicators with better capabilities, and computationally more efficient than those introduced in \cite{WENO_nuevo}, or in \cite{baezaburgermuletzorio} can be performed, but the process requires an study on the order of accuracy, which exceeds the scope of this paper.
 \end{remark}

\section{Numerical experiments}\label{numericalexps}

In this section, we check our theoretical results through some numerical examples. We divide it into four parts, starting with 1$d$ experiments for uniform and non-uniform grids and performing some tests for the multivariable case. In both cases, the order of accuracy is analyzed measuring the error
in a set of points with a determined grid and refining it. For this purpose we define the error  {in a finite set} $\Upsilon\subset \mathbb{R}^n$ as:
\begin{equation}\label{deferror}
E(\Upsilon)=\max_{\mathbf{x}^*\in \Upsilon} |f(\mathbf{x}^*)-\tilde{\mathcal{I}}^{2r-1}(\mathbf{x}^*)|.
\end{equation}
Secondly, in all cases, we study the behavior of the resulting interpolator in the zones close to the discontinuities and conclude that by employing our new non-linear algorithm, if the discontinuity is isolated, the Gibbs phenomenon is avoided.

\subsection{Examples in 1D for uniform grids}\label{sec1duniforme}

We perform the first experiment using the same function studied in \cite{ARSY20}:
\begin{equation}\label{experimento1}
f_1(x)=\left\{\begin{array}{ll}
x^{10}-x^9+x^8-4x^7+x^6+x^5+x^4+x^3+5x^2+3x, &-\frac{\pi}{6}\le x<0,\\
1-(x^{10}-2x^9+3x^8-8x^7-2x^6+x^5-2x^4-3x^3-5x^2+0.5x),&0\le x<1-\frac{\pi}{6},
\end{array}
\right.
\end{equation}
discretizing it with $N_\ell=2^\ell+1$ points in the interval $[-\frac{\pi}{6},1-\frac{\pi}{6}]$. To analyze the order close to the discontinuity we locate the point 0 in each level $\ell$, compute the errors in the adjacent intervals, and estimate the numerical order of accuracy. Therefore, let $j_0^\ell\in\mathbb{N}$ such that
$0\in[x_{j_0^\ell-1},x_{j_0^\ell}]$, then we interpolate the value of the function $f$ at 10000 points equally spaced in the interval $[x_{j_0^\ell-5},x_{j_0^\ell+4}]$,
denoted by $\mathcal{J}^\ell$, determine the error using Eq. \eqref{deferror}, $E^\ell_s=E(\mathcal{J}^\ell\cap [x_{j_0^\ell+s-1},x_{j_0^\ell+s}])$ and approximate the numerical order in each interval $s$ as:
\begin{equation}\label{eqordernum}
o^\ell_s=\log_2\left(\frac{E^{\ell-1}_s}{E^{\ell}_s}\right), \quad s=-4,\hdots,4.
\end{equation}
This  allows us to analyze the progressively increasing order in the neighboring intervals to the {one} containing the discontinuity. For $r=3$, we can see in Table \ref{tabla1} that the order of accuracy increases as we proved in Theorem \ref{teo1_multivariate}.  Also, for $r=4$, Table \ref{tabla3}, we observe the same behavior of the numerical order. However, when we apply the classical WENO algorithm, in both cases $r=3,4$, Tables \ref{tabla2} and \ref{tabla4}, the numerical order is reduced to $r+1$, in all the intervals where one of the small stencils is contaminated by the singularity. Note that when we use our algorithm, we interpolate in any point of the interval, but when using the classical WENO method, we can only interpolate at the mid-point.

\begin{table}[!ht]
\begin{center}
\resizebox{18cm}{!} {
\begin{tabular}{|c|cc|cc|cc|cc|cc|cc|cc|cc|cc|}
\cline{1-19}
$\ell$ &$E_{-4}^\ell$ & $o_{-4}^\ell$ &$E_{-3}^\ell$ & $o_{-3}^\ell$ & $E_{-2}^\ell$ & $o_{-2}^\ell$ & $E_{-1}^\ell$ & $o_{-1}^\ell$ & $E_{0}^\ell$ & $o_{0}^\ell$ & $E_{1}^\ell$ & $o_{1}^\ell$ & $E_{2}^\ell$ & $o_{2}^\ell$  &$E_{3}^\ell$ & $o_{3}^\ell$ &$E_{4}^\ell$ & $o_{4}^\ell$             \\
\hline
5&   1.6751e-08 &&  1.3389e-08 &&  5.7904e-09 &&  7.3537e-07 &&  7.3774e-01 &&  1.7858e-06 &&  3.0257e-08 &&  2.4029e-08 &&  2.6927e-08   &                                       \\
6&   1.4582e-10 &6.84&  1.2149e-10 &6.78&  8.5220e-10 &2.76&  5.1819e-08 &3.82&  5.1486e-01 & 0.51&  1.1284e-07 &3.98&  3.5988e-10 &6.39&  3.4480e-10 & 6.12&  3.3124e-10   & 6.34\\
7&   1.4314e-12 &6.67&  1.2491e-12 &6.60&  3.6705e-11 &4.53&  3.5352e-09 &3.87&  9.8435e-01 &-0.93&  7.1647e-09 &3.97&  2.6416e-11 &3.76&  3.0225e-12 & 6.83&  3.6214e-12   & 6.51\\
8&   1.7396e-14 &6.36&  1.5998e-14 &6.28&  1.2246e-12 &4.90&  2.2682e-10 &3.96&  9.6727e-01 & 0.02&  4.5610e-10 &3.97&  1.0890e-12 &4.60&  3.4750e-14 & {\bf 6.44}&  3.7415e-14   & {\bf 6.59}\\
9&   2.3939e-16 &{\bf 6.18}&  2.2551e-16 &{\bf 6.14}&  3.9328e-14 &{\bf 4.96}&  1.4359e-11 &{\bf 3.98}&  9.3163e-01 & 0.05&  2.8800e-11 &{\bf 3.98}&  3.7748e-14 &{\bf 4.85}&  7.7716e-16 & 5.48&  8.8818e-16   & 5.39\\
\hline
\end{tabular}
}
\caption{Grid refinement analysis for the {\bf new} WENO-6 algorithm for the function in (\ref{experimento1}). }\label{tabla1}
\end{center}
\end{table}

\begin{table}[!ht]
\begin{center}
\resizebox{18cm}{!} {
\begin{tabular}{|c|cc|cc|cc|cc|cc|cc|cc|cc|cc|}
\cline{1-19} $\ell$ &$E_{-4}^\ell$ & $o_{-4}^\ell$ &$E_{-3}^\ell$ & $o_{-3}^\ell$ & $E_{-2}^\ell$ & $o_{-2}^\ell$ & $E_{-1}^\ell$ & $o_{-1}^\ell$ & $E_{0}^\ell$ & $o_{0}^\ell$ & $E_{1}^\ell$ & $o_{1}^\ell$ & $E_{2}^\ell$ & $o_{2}^\ell$  &$E_{3}^\ell$ & $o_{3}^\ell$ &$E_{4}^\ell$ & $o_{4}^\ell$             \\
\hline
5&    1.6749e-08 &&  1.3388e-08 &&  1.5644e-07 &&  6.7525e-07 &&  4.7375e-01 &&  1.6736e-06 &&  3.6670e-07 &&  2.4040e-08 &&  2.6932e-08  &                                      \\
6&    1.4582e-10 &6.84&  1.2148e-10 &6.78&  1.1614e-08 &3.75&  4.8161e-08 &3.80&  4.7959e-01 &-0.01&  1.0573e-07 &3.98&  2.4504e-08 &3.90&  3.4479e-10 &6.12 & 3.3126e-10  & 6.34\\
7&    1.4314e-12 &6.67&  1.2491e-12 &6.60&  7.8165e-10 &3.89&  3.3045e-09 &3.86&  5.0394e-01 &-0.07&  6.7145e-09 &3.97&  1.5610e-09 &3.97&  3.0224e-12 &6.83 & 3.6212e-12  & 6.51\\
8&    1.7396e-14 &6.36&  1.5987e-14 &6.28&  4.9619e-11 &3.97&  2.1244e-10 &3.95&  5.0215e-01 & 0.00&  4.2750e-10 &3.97&  9.9145e-11 &3.97&  3.4639e-14 &6.44 & 3.7192e-14   &6.60 \\
9&    2.3592e-16 &{\bf 6.20}&  2.2204e-16 &{\bf 6.16}&  3.1239e-12 &{\bf 3.98}&  1.3456e-11 &{\bf 3.98}&  5.0123e-01 & 0.00&  2.6997e-11 &{\bf 3.98}&  6.2474e-12 &{\bf 3.98}&  3.3307e-16 &{\bf 6.70} & 5.5511e-16   &\bf{6.06}\\
\hline
\end{tabular}
}
\caption{Grid refinement analysis for the {\bf classical} WENO-6 algorithm for the function in (\ref{experimento1}). }\label{tabla2}
\end{center}
\end{table}

\begin{table}[!ht]
\begin{center}
\resizebox{18cm}{!} {
\begin{tabular}{|c|cc|cc|cc|cc|cc|cc|cc|cc|cc|}
\cline{1-19} $\ell$ &$E_{-4}^\ell$ & $o_{-4}^\ell$ &$E_{-3}^\ell$ & $o_{-3}^\ell$ & $E_{-2}^\ell$ & $o_{-2}^\ell$ & $E_{-1}^\ell$ & $o_{-1}^\ell$ & $E_{0}^\ell$ & $o_{0}^\ell$ & $E_{1}^\ell$ & $o_{1}^\ell$ & $E_{2}^\ell$ & $o_{2}^\ell$  &$E_{3}^\ell$ & $o_{3}^\ell$ &$E_{4}^\ell$ & $o_{4}^\ell$             \\
\hline
5  & 2.2152e-11 &&  1.6644e-09 &&  1.8454e-08 &&  3.5396e-09 &&  7.0227e-01&&   4.1523e-08  && 2.8408e-08  && 2.2035e-09 &&  8.4167e-11   &                                             \\
6  & 9.4535e-14 &{\bf 7.87}&  1.2205e-11 &7.09&  1.6904e-10 &6.77&  2.2973e-09 &0.62&  5.3587e-01& 0.39&   1.2832e-09  &5.01& 3.1781e-10  &6.48& 1.9403e-11 &6.82&  3.3018e-13   &   {\bf  7.99}    \\
7  & 4.9960e-16 &7.56&  9.1455e-14 &7.06&  1.7535e-12 &6.59&  9.5206e-11 &4.59&  9.8422e-01&-0.87&   7.0822e-11  &4.17& 3.9031e-12  &6.34& 1.6120e-13 &{\bf 6.91}&  1.7764e-15   &    7.53    \\
8  & 2.0817e-17 &4.58&  7.0777e-16 &{\bf 7.01}&  2.2673e-14 &6.27&  3.1514e-12 &4.91&  9.6632e-01& 0.02&   2.8244e-12  &4.64& 4.8961e-14  &6.31& 1.7764e-15 &6.50&  5.5511e-16   &    1.67    \\
9  & 1.3878e-17 &0.58&  1.0408e-17 &6.08&  3.2092e-16 &{\bf 6.14}&  1.0091e-13 &{\bf 4.96}&  9.2930e-01& 0.05&   9.6589e-14  &{\bf 4.86}& 8.8818e-16  &{\bf 5.78}& 4.4409e-16 &2.00&  4.4409e-16   &    0.32    \\
\hline
\end{tabular}
}
\caption{Grid refinement analysis for the {\bf new} WENO-8 algorithm for the function in (\ref{experimento1}). }\label{tabla3}
\end{center}
\end{table}

\begin{table}[!ht]
\begin{center}
\resizebox{18cm}{!} {
\begin{tabular}{|c|cc|cc|cc|cc|cc|cc|cc|cc|cc|}
\cline{1-19} $\ell$ &$E_{-4}^\ell$ & $o_{-4}^\ell$ &$E_{-3}^\ell$ & $o_{-3}^\ell$ & $E_{-2}^\ell$ & $o_{-2}^\ell$ & $E_{-1}^\ell$ & $o_{-1}^\ell$ & $E_{0}^\ell$ & $o_{0}^\ell$ & $E_{1}^\ell$ & $o_{1}^\ell$ &
$E_{2}^\ell$ & $o_{2}^\ell$  &$E_{3}^\ell$ & $o_{3}^\ell$ &$E_{4}^\ell$ & $o_{4}^\ell$             \\
\hline
5&    2.2458e-11 &&  1.6502e-09 &&  4.1669e-09 &&  3.2826e-09 &&  4.5560e-01 &&  3.9398e-08 &&  2.2108e-09 &&  4.0566e-10 &&  8.1793e-11  &  \\
6&    9.4508e-14 &{\bf 7.89}&  1.5559e-10 &3.40&  5.6031e-10 &2.89&  2.0622e-09 &0.67&  4.7046e-01 &-0.04&  1.1327e-09 &5.12&  3.6677e-10 &2.59&  1.1083e-10 &1.87 & 3.2652e-13  & {\bf7.96}\\
7&    4.5797e-16 &7.68&  5.9243e-12 &4.71&  2.1924e-11 &4.67&  8.5879e-11 &4.58&  5.0851e-01 &-0.11&  6.3658e-11 &4.15&  1.6916e-11 &4.43&  4.7008e-12 &4.55 & 1.5543e-15  & 7.71\\
8&             0 &-   &  1.9244e-13 &4.94&  7.1713e-13 &4.93&  2.8456e-12 &4.91&  5.0443e-01 & 0.01&  2.5479e-12 &4.64&  6.4937e-13 &4.70&  1.7553e-13 &4.74 &          0   & -\\
9&    3.4694e-18 &-   &  6.1149e-15 &{\bf4.97}&  2.2865e-14 &{\bf4.97}&  9.1148e-14 &{\bf4.96}&  5.0237e-01 & 0.00&  8.7264e-14 &{\bf4.86}&  2.1871e-14 &{\bf4.89}&  5.6621e-15 &{\bf4.95} & 2.2204e-16   &-\\
\hline
\end{tabular}
}
\caption{Grid refinement analysis for the {\bf classical} WENO-8 algorithm for the function in (\ref{experimento1}). }\label{tabla4}
\end{center}
\end{table}

\subsection{Examples in 1D for non-uniform grids}
{In} this subsection, we perform two experiments: one with the function defined in \eqref{experimento1}
and, also, with the function studied in \cite{ABM}:
\begin{equation}\label{experimento2}
f_2(x)=\left\{\begin{array}{ll}
5(x-0.25)^3e^{x^2}, &0\le x<2/3,\\
1.5-(x-0.25)^3e^{x^2},&2/3\le x<1.
\end{array}
\right.
\end{equation}
We construct a non-uniform grid $\{x_i\}_{i=0}^{32}$ with $x_i\sim U[0,1]$, being $U[0,1]$ a uniform distribution in $[0,1]$ and interpolate
the values of the function in a uniform grid of 10000 points. The result is showed in Figure \ref{fig1}, we can see that the two interpolants avoid Gibbs-phenomenon.
\dioni{}{We compare our method with classical WENO adapted to non-uniform grids, we change the optimal weights in \eqref{opt_w} using the formulas presented in Lemma \ref{lemaauxiliar1general1d}. In Tables \ref{tabla5b} and \ref{tabla6b} we can observe similar results to those obtained for uniform grids: the order of accuracy in the adjacent cells to the
isolated discontinuity, $o_2^l$ and $o_{-2}^l$, is smaller than the one obtained using the new WENO algorithm}. These numerical results are consistent with the theoretical ones.

\begin{figure}[!ht]
\begin{center}
\begin{tabular}{cc}
\includegraphics[width=8.5cm]{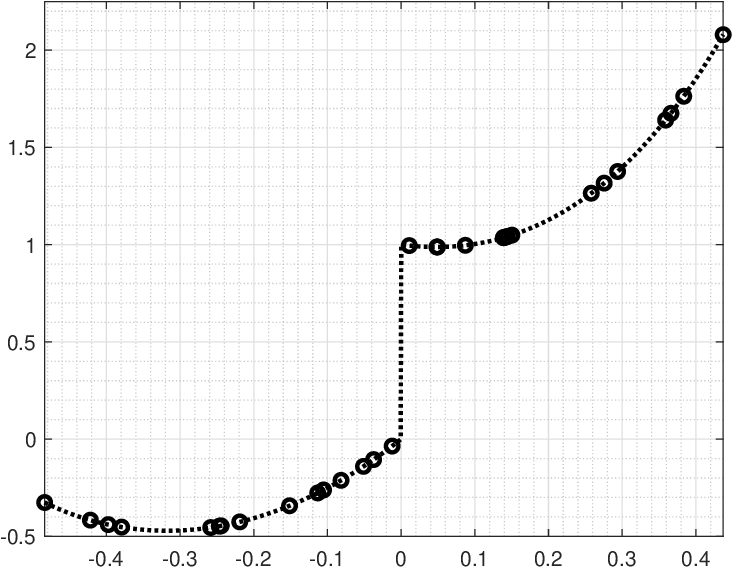}  & \includegraphics[width=8.5cm]{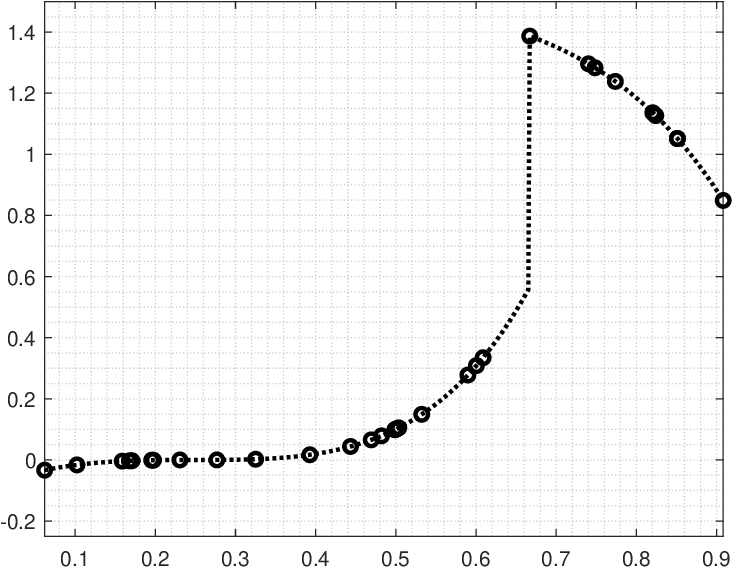} \\
 (a) & (b) \\
\end{tabular}
\end{center}
\caption{Interpolation using the new WENO-6 algorithm. Dashed line: interpolation of the function \eqref{experimento1} (a) and of the function \eqref{experimento2} (b). Circles: Data points.}
    \label{fig1}
 \end{figure}

To analyze the numerical order we take the non-uniform mesh, $\{x^5_i\}_{i=0}^{32}$, we divide it in each level using {the following} formula:
\begin{equation}\label{divisionmalla}
\begin{cases}
x^{\ell+1}_{2j+1}=\frac{1}{2}(x^\ell_j+x^\ell_{j+1}), & j=0,\hdots,2^\ell-1,\\
x^{\ell+1}_{2j}=x_j^\ell,& j=0,\hdots,2^\ell,
\end{cases}
\end{equation}
for $\ell\geq 5$ and compute the numerical order following Eq. \eqref{eqordernum}. Again, we see, in Tables \ref{tabla5} and \ref{tabla6}, that the behaviour of the numerical order is the expected one, and proved in Theorem  \ref{teo1_multivariate}.

\begin{table}[!ht]
\begin{center}
\resizebox{18cm}{!} {
\begin{tabular}{|c|cc|cc|cc|cc|cc|cc|cc|cc|cc|}
\cline{1-19} $\ell$ &$E_{-4}^\ell$ & $o_{-4}^\ell$ &$E_{-3}^\ell$ & $o_{-3}^\ell$ & $E_{-2}^\ell$ & $o_{-2}^\ell$ & $E_{-1}^\ell$ & $o_{-1}^\ell$ & $E_{0}^\ell$ & $o_{0}^\ell$ & $E_{1}^\ell$ & $o_{1}^\ell$ & $E_{2}^\ell$ & $o_{2}^\ell$  &$E_{3}^\ell$ & $o_{3}^\ell$ &$E_{4}^\ell$ & $o_{4}^\ell$             \\
\hline
5  &1.4532e-09 &&   1.9867e-09  && 1.3509e-09 &&  2.1725e-07 &&  5.0179e-01&&  9.6277e-07  &&  5.7583e-12  && 1.8499e-08 &&  2.0715e-08   &                                             \\
6  &3.0627e-12 &8.89&   1.1074e-11  &7.48& 2.8323e-10 &2.25&  1.8743e-08 &3.53&  9.5557e-01&-0.92&  1.4596e-07  &2.72&  1.0063e-10  &-4.12& 9.9920e-16 & 24.14&  8.8818e-16   &  24.47   \\
7  &3.4114e-13 &3.16&   2.7956e-13  &5.30& 9.6570e-12 &4.87&  1.1295e-09 &4.05&  9.0939e-01& 0.07&  4.4933e-09  &5.02&  5.2145e-11  & 0.94& 7.6797e-12 &-12.90&  6.9681e-12   & -12.93   \\
8  &3.1225e-15 &6.77&   2.6784e-15  &6.70& 2.8241e-13 &5.09&  6.9824e-11 &4.01&  8.1464e-01& 0.15&  1.4067e-10  &4.99&  3.2563e-13  & 7.32& 1.1768e-14 &  9.35&  5.5622e-14   &   6.96   \\
9  &4.1633e-17 &{\bf 6.22}&   4.1633e-17  &{\bf 6.00}& 8.9997e-15 &{\bf 4.97}&  4.4054e-12 &{\bf 3.98}&  5.9349e-01& 0.45&  8.8614e-12  &{\bf 3.98}&  9.1038e-15  & {\bf 5.16}& 4.4409e-16 &  {\bf 4.72}&  3.3307e-16   &   {\bf 7.38}   \\
\hline
\end{tabular}
}
\caption{Non-uniform grid refinement analysis for the {\bf new} WENO-6 algorithm for the function (\ref{experimento1})}\label{tabla5}
\end{center}
\end{table}

\begin{table}[!ht]
{    \color{blue}
\begin{center}
\resizebox{18cm}{!} {
\begin{tabular}{|c|cc|cc|cc|cc|cc|cc|cc|cc|cc|}
\cline{1-19} $\ell$ &$E_{-4}^\ell$ & $o_{-4}^\ell$ &$E_{-3}^\ell$ & $o_{-3}^\ell$ & $E_{-2}^\ell$ & $o_{-2}^\ell$ & $E_{-1}^\ell$ & $o_{-1}^\ell$ & $E_{0}^\ell$ & $o_{0}^\ell$ & $E_{1}^\ell$ & $o_{1}^\ell$ & $E_{2}^\ell$ & $o_{2}^\ell$  &$E_{3}^\ell$ & $o_{3}^\ell$ &$E_{4}^\ell$ & $o_{4}^\ell$             \\
\hline
5  &1.4532e-09&    &   1.9867e-09  &    & 2.2394e-09 &     &  2.1596e-07 &    &  5.0556e-01&     &  9.6277e-07  &    &  1.1280e-10  &     & 1.8422e-08 &      &2.0870e-08   &               \\
6  &3.0627e-12&8.89&   1.1074e-11  &7.48& 4.4299e-09 &-0.98&  1.8643e-08 &3.53&  9.5952e-01&-0.92&  1.4596e-07  &2.72&  9.9574e-09  &-6.46& 8.8818e-16 & 24.30&8.8818e-16&  24.48  \\
7  &3.4114e-13&3.16&   2.7956e-13  &5.30& 2.6139e-10 & 4.08&  1.1280e-09 &4.04&  9.1492e-01& 0.06&  4.4933e-09  &5.02&  2.7581e-09  & 1.85& 7.6801e-12 &-13.07&6.9664e-12& -12.93  \\
8  &3.1225e-15&6.77&   2.6784e-15  &6.70& 1.5284e-11 & 4.09&  6.9803e-11 &4.01&  8.1613e-01& 0.16&  1.4067e-10  &4.99&  3.4826e-11  & 6.30& 1.1879e-14 &  9.33&5.5511e-14&   6.97  \\
9  &4.1633e-17&{\bf6.22}&   4.3368e-17  &{\bf5.94}& 9.6033e-13 &{\bf 3.99}&  4.4051e-12 &{\bf3.98}&  5.9392e-01& 0.45&  8.8614e-12  &{\bf3.98}&  1.9277e-12  & {\bf4.17}& 4.4409e-16 &  {\bf4.74}&3.3307e-16&   {\bf7.38} \\
\hline
\end{tabular}
}
\caption{Non-uniform grid refinement analysis for the {\bf classical} WENO-6 algorithm for the function (\ref{experimento1})}\label{tabla5b}
\end{center}  }
\end{table}

 \begin{table}[!ht]
 \begin{center}
 \resizebox{18cm}{!} {
 \begin{tabular}{|c|cc|cc|cc|cc|cc|cc|cc|cc|cc|}
 \cline{1-19} $\ell$ &$E_{-4}^\ell$ & $o_{-4}^\ell$ &$E_{-3}^\ell$ & $o_{-3}^\ell$ & $E_{-2}^\ell$ & $o_{-2}^\ell$ & $E_{-1}^\ell$ & $o_{-1}^\ell$ & $E_{0}^\ell$ & $o_{0}^\ell$ & $E_{1}^\ell$ & $o_{1}^\ell$ & $E_{2}^\ell$ & $o_{2}^\ell$  &$E_{3}^\ell$ & $o_{3}^\ell$ &$E_{4}^\ell$ & $o_{4}^\ell$             \\
 \hline
 5  &1.6775e-08 &&   2.7033e-07  &&  6.2666e-05 &&  2.8007e-04 &&  8.2027e-01&&  5.6347e-05  &&  5.8642e-06  && 1.6321e-08 &&  4.6059e-08   &                                             \\
 6  &3.5056e-12 &12.22&   2.1402e-12  &16.94&  1.9801e-06 &4.98&  3.5278e-06 &6.31&  8.1413e-01&0.01&  1.0834e-05  &2.37&  1.3715e-07  &5.41& 1.3137e-11 &10.27&  4.4049e-12   &  13.35   \\
 7  &2.1300e-13 & 4.04&   1.4868e-10  &-6.11&  1.9147e-08 &6.69&  1.3796e-06 &1.35&  8.0577e-01&0.01&  9.3678e-07  &3.53&  2.7173e-08  &2.33& 8.3370e-10 &-5.98&  1.8362e-10   &  -5.38   \\
 8  &1.4874e-11 &-6.12&   1.5467e-11  & 3.26&  9.9600e-10 &4.26&  9.4274e-08 &3.87&  7.9070e-01&0.02&  5.5039e-08  &4.08&  7.7134e-10  &5.13& 1.6195e-11 & 5.68&  1.7014e-11   &   3.43   \\
 9  &2.5019e-13 & {\bf 5.89}&   2.5602e-13  & {\bf 5.91}&  3.2880e-11 &{\bf 4.92}&  6.1595e-09 &{\bf 3.93}&  7.5953e-01&0.05&  3.3358e-09  &{\bf 4.04}&  2.3013e-11  &{\bf 5.06}& 2.3137e-13 &{\bf  6.12}&  2.3692e-13   &   {\bf 6.16}   \\
 \hline
 \end{tabular}
 }
 \caption{Non-uniform grid refinement analysis for the {\bf new} WENO-6 algorithm for the function \eqref{experimento2}.}\label{tabla6}
 \end{center}
 \end{table}

\begin{table}[!ht]
{\color{blue}
\begin{center}
\resizebox{18cm}{!} {
\begin{tabular}{|c|cc|cc|cc|cc|cc|cc|cc|cc|cc|}
\cline{1-19} $\ell$ &$E_{-4}^\ell$ & $o_{-4}^\ell$ &$E_{-3}^\ell$ & $o_{-3}^\ell$ & $E_{-2}^\ell$ & $o_{-2}^\ell$ & $E_{-1}^\ell$ & $o_{-1}^\ell$ & $E_{0}^\ell$ & $o_{0}^\ell$ & $E_{1}^\ell$ & $o_{1}^\ell$ & $E_{2}^\ell$ & $o_{2}^\ell$  &$E_{3}^\ell$ & $o_{3}^\ell$ &$E_{4}^\ell$ & $o_{4}^\ell$             \\
\hline
5  &1.6383e-08&      &  2.8758e-07  &      &  1.0398e-04 &    &  3.9364e-04 &    &  8.1970e-01&     & 5.6347e-05  &    & 1.8207e-05  &   & 1.6304e-08 &      &4.7433e-08   &               \\
6  &3.5061e-12& 12.19&  2.1401e-12  & 17.03&  5.2570e-06 &4.30&  3.5206e-06 &6.80&  8.1338e-01&0.01&  1.0834e-05 &2.37&  8.1099e-07 &4.48& 1.3140e-11 & 24.30&4.4047e-12& 13.39  \\
7  &2.1300e-13&  4.04&  1.4842e-10  & -6.11&  2.3862e-07 &4.46&  1.3735e-06 &1.35&  8.0534e-01&0.01&  9.3678e-07 &3.53&  1.9806e-07 &2.03& 8.3530e-10 &-13.07&1.8435e-10& -5.38  \\
8  &1.4875e-11& -6.12&  1.5467e-11  &  3.26&  2.0923e-08 &3.51&  9.4127e-08 &3.86&  7.8938e-01&0.02&  5.5039e-08 &4.08&  1.2003e-08 &4.04& 1.6196e-11 &  9.33&1.7015e-11&  3.43  \\
9  &2.5013e-13&  {\bf5.89}&  2.5613e-13  & {\bf 5.91}&  1.3520e-09 &{\bf3.95}&  6.1566e-09 &{\bf3.93}&  7.5647e-01&0.06&  3.3358e-09 &{\bf4.04}&  7.3780e-10 &{\bf4.02}& 2.3115e-13 &  {\bf4.74}&2.3714e-13& {\bf 6.16} \\
\hline
\end{tabular}
}
\caption{Non-uniform grid refinement analysis for the {\bf classical} WENO-6 algorithm for the function (\ref{experimento2})}\label{tabla6b}
\end{center}}
\end{table}

\subsection{Examples in 2d for uniform {grids}}

We start with a smooth function to check numerically that the order is $2r$. Thus, we consider the function
\begin{equation}\label{experimento3}
f_3(x_1,x_2)=\frac{1}{x_1^2+x_2^2+1},
\end{equation}
and interpolate it using a Cartesian grid in the square $[-1,1]^2$:
\begin{equation}\label{gridcartesiana}
X^\ell=\{(x^{\ell,(i)}_1,x^{\ell,(j)}_2)\}_{i,j=0}^{2^\ell}, \quad x^{\ell,(i)}_1=-1+ih_\ell,\,\, x^{\ell,(j)}_2=-1+jh_\ell,\,\, h_\ell=2^{-\ell+1},
\end{equation}
{at} the points
$${B}^\ell=\{(\tau(k_1) x_1+(1-\tau(k_1))y_1,\tau(k_2) x_2+(1-\tau(k_2))y_2):(x_1,x_2),(y_1,y_2)\in {X}^\ell,\, \tau(k)=0.3+0.1k,\,k=0,4\},$$
%
applying the new WENO algorithm for $r=3$. Note that the set of points $B^\ell$ is conformed by a convex combination of points of the grid. We have chosen
this collection of points, but we could have selected any other. The result is shown in Fig. \ref{fig2}, and we observe that it is similar to the original function $f_3$.
In order to compute the numerical order, we use the same strategy presented in Section \ref{sec1duniforme}, we calculate $E^\ell=E(B^\ell)$ and approximate the order as in Eq. \eqref{eqordernum}. Regarding the table presented in Fig. \ref{fig2}  {it is clear} that the order of accuracy is the expected one.

\begin{figure}[H]
\centering
\begin{minipage}[c]{0.49\linewidth}
\centering
\begin{tabular}{|c|cc|cc|}\hline
&\multicolumn{2}{c|}{$r=3$} & \multicolumn{2}{c|}{$r=4$}\\\hline
$\ell$ & $E^\ell$ & $o^\ell$ & $E^\ell$ & $o^\ell$\\ \hline
4 &7.9879e-07    &  &   1.6882e-08 & \\
5 &   1.3037e-08 & 5.94&   6.4910e-11 &8.02 \\
6 &   2.0584e-10 & 5.98&   2.5102e-13 &{\bf 8.01} \\
7 &   3.2245e-12 & {\bf6.00}&   1.4433e-15 &7.44 \\
 \hline
\end{tabular}
\end{minipage}
\hfill
\begin{minipage}[c]{0.49\linewidth}
\centering
\includegraphics[width=8.5cm]{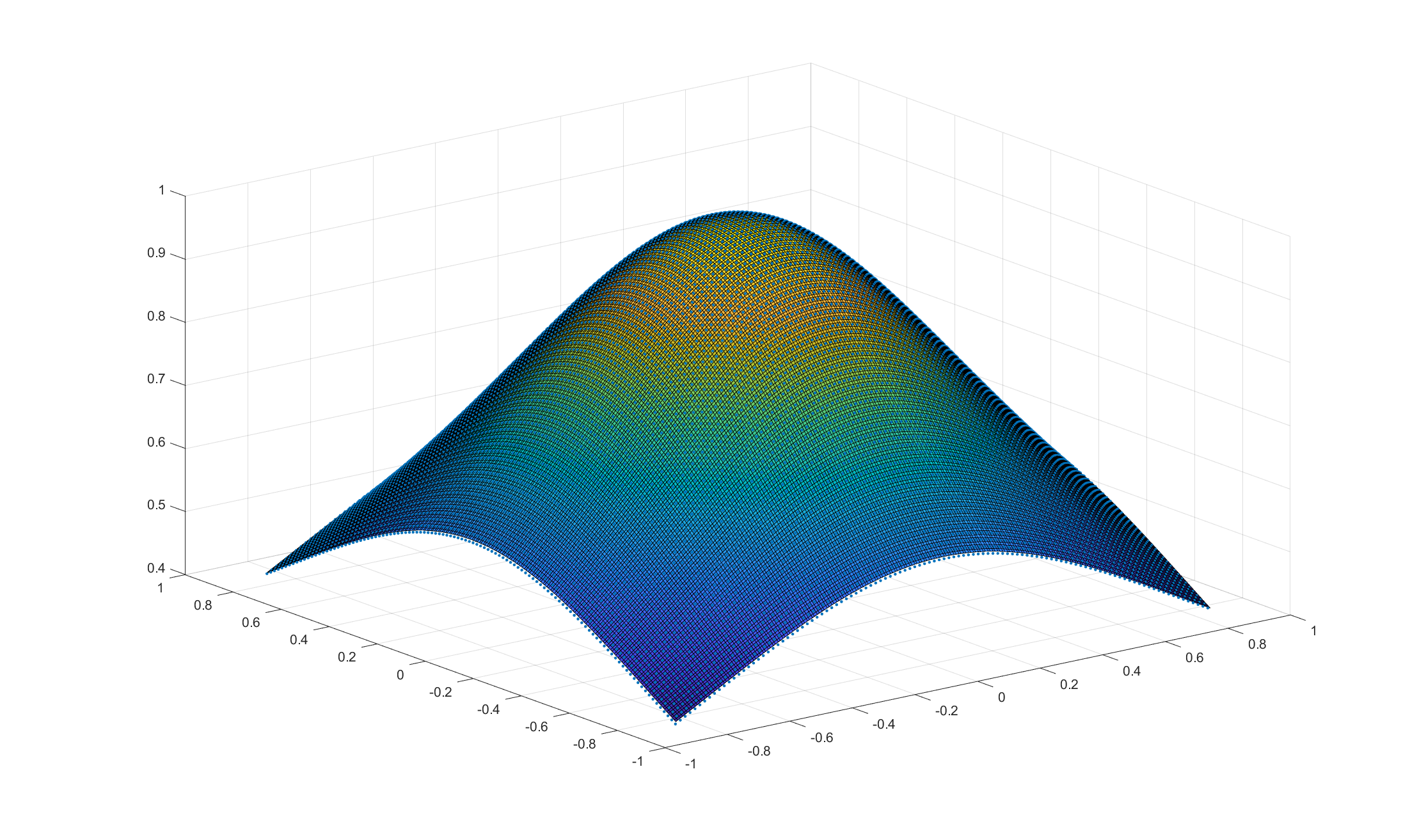}
\end{minipage}
\caption{Left: Grid refinement analysis for the new WENO-6 algorithm for the function \eqref{experimento3}. Right: Interpolation using the new WENO-6 algorithm.}
    \label{fig2}
\end{figure}

In order to analyze the order close to the discontinuities, we perform an experiment with the function:
 \begin{equation}\label{experimento4}
f_3(x_1,x_2)=\begin{cases}
e^{x_1+x_2}\cos(x_1-x_2),& x_1+x_2\leq 0,\\
e^{x_1+x_2}\cos(x_1-x_2)+1,& x_1+x_2> 0.\\
\end{cases}
\end{equation}
We take the mid-point $\mathbf{x}_0=(x^{\ell,(2^{\ell}-1)}_1,x^{\ell,(2^{\ell}-1)}_2)=(0,0)$ of the cartesian grid $X^\ell$; the set
$$Y^\ell=\{(s_1,s_2)h_\ell:-5\leq s_1\leq 6, \,-4\leq s_2\leq 5 \},$$
(see Figure \ref{fig3}.(a) red big points), and calculate an approximation to the function in the points
$$\mathcal{X}^\ell=\{(\tau(k_1) x_1+(1-\tau(k_1))y_1,\tau(k_2) x_2+(1-\tau(k_2))y_2):(x_1,x_2),(y_1,y_2)\in Y^\ell,\, \tau(k)=0.3+0.1k,\,k=0,\hdots,4\},$$
(see Figure \ref{fig3}.(a) gray, green, yellow and blue small points). We chose this set for the convenience of analyzing the order, but any other random sample of points could have been selected. A region between four data points is defined as:
$$\mathcal{X}^\ell_{(s_1,s_2)}=\mathcal{X}^\ell\cap [s_1h_\ell,(s_1+1)h_\ell]\times [s_2h_\ell,(s_2+1)h_\ell],\quad -5\leq s_1\leq 5,\,-4\leq s_2\leq 4,$$
and the errors and the numerical orders in each region as:
$$E^\ell_{(s_1,s_2)}=E(\mathcal{X}^\ell_{(s_1,s_2)}), \quad o^\ell_{(s_1,s_2)}=\log_2\left(\frac{E^{\ell-1}_{(s_1,s_2)}}{E^{\ell}_{(s_1,s_2)}}\right),\quad -5\leq s_1\leq 5,\,-4\leq s_2\leq 4.$$
\dioni{}{Now, we compare with a modification of the classical WENO method adapted to work using tensor products. Note that the WENO-2d method defined in \cite{arandigamuletrenau} is constructed for uniform-grids. In this case, we reformulate the classical WENO to approximate at any point in the considered interval and for non-uniform-grids.}
Theoretically, we have proved that the order of accuracy obtained at the green points in Figure \ref{fig3}.(a) is equal to 4, as there is one unique square stencil that is not contaminated by the discontinuity. For the yellow ones, 5 is the theoretical order of accuracy, and 6 for the rest of the points. \dioni{}{Using the classical WENO algorithm, we only
 distinguish two zones, order 4 (yellow and green zones) and order 6 for the rest of the points.}
\begin{figure}[!ht]
\begin{center}
\begin{tabular}{cc}
\includegraphics[width=8.5cm]{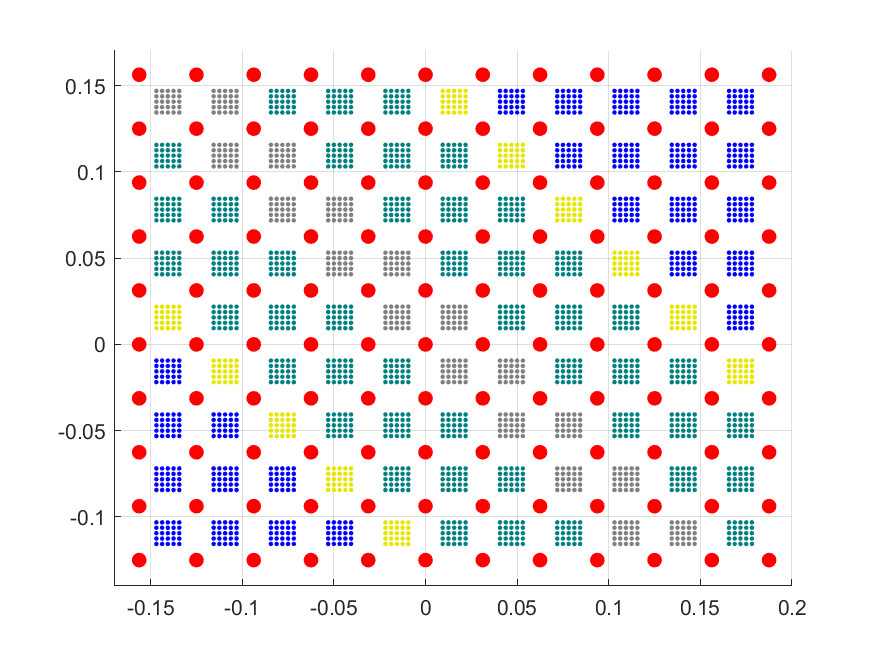}& \includegraphics[width=8.5cm]{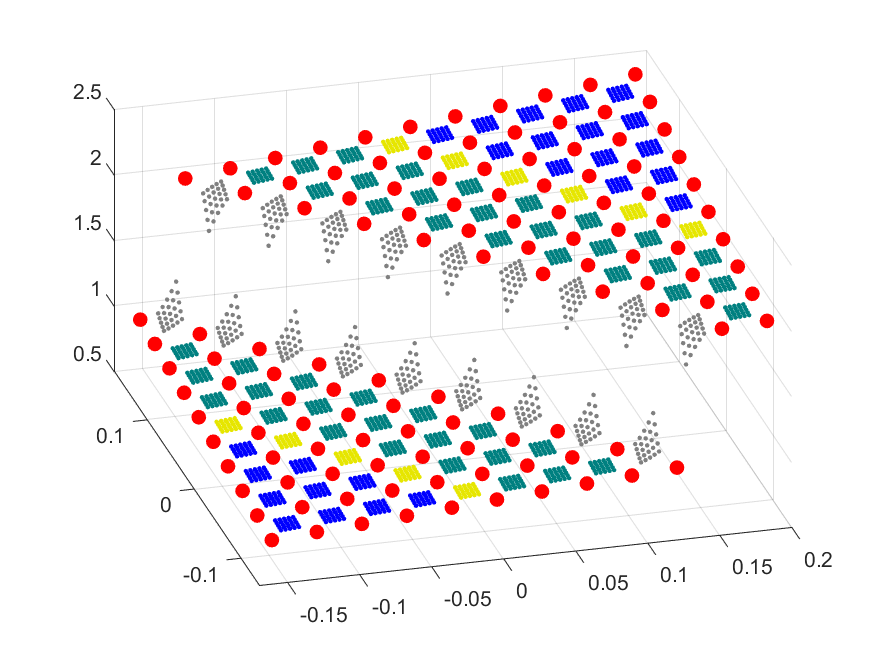} \\
(a) & (b)  \\
\includegraphics[width=8.5cm]{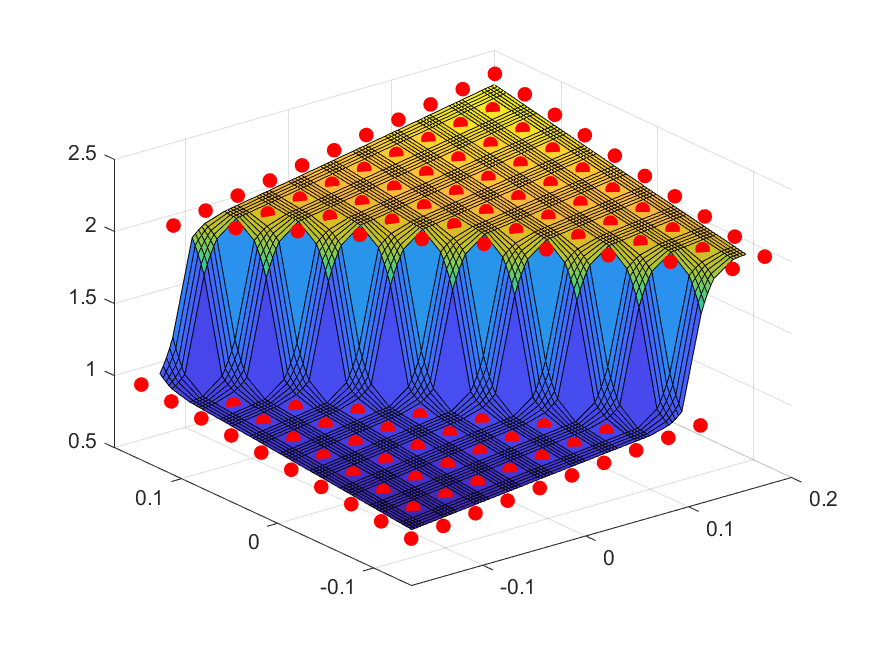}  & \includegraphics[width=8.5cm]{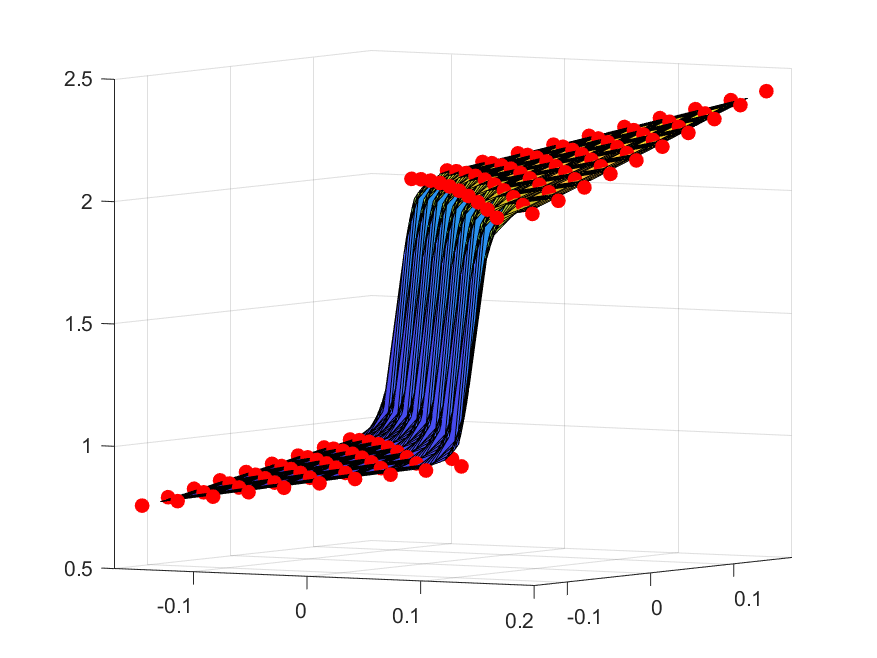}\\
 (c) & (d)
\end{tabular}
\end{center}
\caption{Interpolation using new WENO-6 algorithm to the function \eqref{experimento4}. (a): Nodes (red points) and points of interpolation (color points). (b), (c) and (d): Result of the interpolation in these points.}
    \label{fig3}
 \end{figure}

We can check this fact in Tables  \ref{tabla7} \dioni{}{and \ref{tabla7b}}, where we determine the order, $o_{(s_1,s_2)}^7$ in each region. In this case, in the smooth part, we obtain numerical order 7. The interpolation is shown in Figure \ref{fig3}.(c). We can see again that the resulting intepolator avoids Gibbs phenomenon (Figure \ref{fig3}.(d)).

\begin{table}[!ht]
 \begin{center}
 \begin{tabular}{|r|ccccccccccc|}   \hline
 $o^7_{(s_1,s_2)}$ & $s_1=-5$ & $-4$ & $-3$ & $-2$ & $-1$ & $0$ & $1$ & $2$ & $3$ & $4$ & $5$ \\\hline
  $s_2=-4$      & -0.00   &       0.00    &      4.03    &      4.07   &       3.99   &      {\bf 5.10}   &       7.07   &       7.10    &      7.32    &        6.81          &         7.09 \\
  $-3$       & 3.93   &      -0.00    &      0.00    &      4.05   &       4.08   &       3.99   &      {\bf 5.11}   &       7.23    &      7.09    &        7.00          &         7.08 \\
  $-2$       & 3.90   &       3.94    &     -0.00    &      0.00   &       4.06   &       4.09   &       4.00   &      {\bf 5.11}    &      7.06    &        7.21          &         7.25 \\
  $-1$       & 3.99   &       3.92    &      3.95    &     -0.00   &       0.00   &       4.06   &       4.09   &       4.00    &     {\bf 5.11}    &        7.07          &          7.20 \\
 $0$        & {\bf4.90}   &       4.00    &      3.92    &      3.96   &      -0.00   &       0.00   &       4.06   &       4.09    &      3.99    &       {\bf 5.10}     &        7.09   \\
 $1$        & 6.89   &       {\bf4.91}    &      4.01    &      3.93   &       3.96   &      -0.00   &       0.00   &       4.06    &      4.08    &        3.99          &         {\bf 5.09}  \\
 $2$        & 6.86   &       6.97    &     {\bf 4.91}    &      4.01   &       3.93   &       3.96   &      -0.00   &       0.00    &      4.05    &        4.07          &         3.97 \\
  $3$       & 6.88   &       6.93    &      6.96    &     {\bf 4.91}   &       4.01   &       3.92   &       3.95   &      -0.00    &      0.00    &        4.03          &         4.06    \\
  $4$       & 6.84   &       6.80    &      6.88    &      6.97   &       {\bf4.91}   &       4.00   &       3.92   &       3.94    &     -0.00    &        0.00          &          4.02 \\
\hline
 \end{tabular}
 \caption{Grid refinement analysis for the {\bf new} WENO-6 algorithm for the function \eqref{experimento4} with $\ell=7$}\label{tabla7}
 \end{center}
 \end{table}

\begin{table}[!ht]
{\color{blue} \begin{center}
 \begin{tabular}{|r|ccccccccccc|}   \hline
 $o^7_{(s_1,s_2)}$ & $s_1=-5$ & $-4$ & $-3$ & $-2$ & $-1$ & $0$ & $1$ & $2$ & $3$ & $4$ & $5$ \\\hline
  $s_2=-4$  & -0.00   &  0.00 & 4.03& 4.07& 4.04& {\bf4.08}&  7.15 & 7.10& 7.15&6.98&7.09\\
  $-3$      &  3.92   & -0.00 & 0.00& 4.04& 4.08& 4.05&  {\bf4.08} & 7.06& 6.89&7.00&6.98\\
  $-2$      &  3.90   &  3.94 & 0.00& 0.00& 4.05& 4.08&  4.05 & {\bf4.09}& 6.91&7.21&7.25\\
  $-1$      &  3.94   &  3.91 & 3.95&-0.00& 0.00& 4.05&  4.09 & 4.05& {\bf4.08}&7.15&7.19\\
 $0$        &  {\bf3.91}   &  3.95 & 3.92& 3.95&-0.00& 0.00&  4.05 & 4.08& 4.05&{\bf4.08}&7.14\\
 $1$        &  6.89   &  {\bf3.92} & 3.96& 3.92& 3.96&-0.00&  0.00 & 4.05& 4.08&4.04&{\bf4.07}\\
 $2$        &  6.91   &  6.97 & {\bf3.92}& 3.96& 3.92& 3.95& -0.00 & 0.00& 4.04&4.07&4.02\\
  $3$       &  6.92   &  6.93 & 6.96& {\bf3.92}& 3.96& 3.92&  3.95 &-0.00& 0.00&4.03&4.05\\
  $4$       &  6.90   &  6.94 & 6.93& 6.90& {\bf3.92}& 3.95&  3.91 & 3.94&-0.00&0.00&4.01\\
\hline
 \end{tabular}
 \caption{Grid refinement analysis for the {\bf classical} WENO-6 algorithm for the function \eqref{experimento4} with $\ell=7$}\label{tabla7b}
 \end{center}    }
 \end{table}

\subsection{Examples in 2d for non-uniform grids}

Finally, in this subsection, we will present two examples to corroborate that our new algorithm  is also valid for non-uniform grids. To study the order we design a non-regular grid for $\ell=4$ applying the following formula
\begin{equation}\label{gridcartesiananouniforme}
\tilde{X}^\ell=\{(\tilde{x}^{\ell,(i)}_1,\tilde{x}^{\ell,(j)}_2)\}_{i,j=0}^{2^\ell}, \quad \tilde{x}^{\ell,(i)}_1=-1+ih_\ell+\varepsilon^\ell_i,\,\, \tilde{x}^{\ell,(j)}_2=-1+jh_\ell+\varepsilon^\ell_j,\,\, h_\ell=2^{-\ell+1},\,\, \varepsilon^\ell_i,\varepsilon^\ell_j \sim U\left[-\frac{h_\ell}{2},\frac{h_\ell}{2}\right]
\end{equation}
being $U\left[-\frac{h_\ell}{2},\frac{h_\ell}{2}\right]$ the uniform distribution in the interval $[-h_\ell/2,h_\ell/2]$. And for $\ell\geq 4$ we use the same strategy
as in 1d, Eq. \eqref{divisionmalla}, i.e., we define
 $\tilde{X}^{\ell+1}=\{(\tilde{x}^{\ell+1,(i)}_1,\tilde{x}^{\ell+1,(j)}_2)\}_{i,j=0}^{2^{\ell+1}}$ as
\begin{equation*}
\begin{split}
\tilde{x}_1^{\ell+1,(l_1)}&=\begin{cases}
\frac12(\tilde{x}_1^{\ell,(i)}+\tilde{x}_1^{\ell,(i+1)}), & l_1=2i+1,\,\,1\le i\le 2^{\ell+1}-1,\\
\tilde{x}_1^{\ell,(i)}, & l_1=2i,\,\,0\le i\le 2^{\ell+1},
\end{cases}\\
\tilde{x}_2^{\ell+1,(l_2)}&=\begin{cases}
\frac12(\tilde{x}_2^{\ell,(j)}+\tilde{x}_2^{\ell,(j+1)}),& l_2=2j+1,\,\,0\le j\le 2^{\ell+1}-1,\\
\tilde{x}_2^{\ell,(j)},  & l_2=2j,\,\,0\le j\le 2^{\ell+1},\\
\end{cases}
\end{split}
\end{equation*}
and interpolate at 25 points contained in each region formed by 4 points:
$$\tilde{\mathcal{X}}^\ell=\{(\tau(k_1) x_1+(1-\tau(k_1))y_1,\tau(k_2) x_2+(1-\tau(k_2))y_2):(x_1,x_2),(y_1,y_2)\in \tilde{X}^\ell,\, \tau(k)=0.3+0.1k,\,\,k=0,1,2,3,4\}.$$
Then we calculate the errors and numerical orders in $\Omega_\ell= \mathcal{X}^\ell\cap[-5.5h_\ell,5.5h_\ell]^2,$ as in previous subsections,
with $E^\ell=E(\Omega_\ell)$. 
We can observe  again in Table \ref{tablaerrores5}, that the results obtained in the numerical experiments satisfy the theoretical ones.
\begin{table}[!ht]
\begin{center}
\begin{tabular}{|c|cc|cc|}\hline
&\multicolumn{2}{c|}{$r=3$} & \multicolumn{2}{c|}{$r=4$}\\\hline
$\ell$ & $E^\ell$ & $o^\ell$ & $E^\ell$ & $o^\ell$\\ \hline
5 &      1.3938e-06 & &   2.3237e-08 & \\
6 &      2.9147e-08 & 5.58&   1.8794e-10 & 6.95 \\
7 &      5.4251e-10 & 5.75&   1.1793e-12 &7.32 \\
8 &      1.0811e-11  & 5.65  & 5.3291e-15& {\bf 7.79}\\
9 &      1.9518e-13  & {\bf 5.79}  & 5.5511e-16& -\\
 \hline
\end{tabular}
\end{center}
\caption{Non-uniform grid refinement analysis for the {\bf new} WENO-6 algorithm for the function (\ref{experimento3}).}\label{tablaerrores5}
\end{table}
To finish these numerical tests, we perform an interpolation of the function
\begin{equation}\label{experimento5}
f_4(x_1,x_2)=\begin{cases}
\frac{1}{16}(x_1+x_2)\sin(16\pi x_1)\sin(16\pi x_2),& x_1+x_2\leq 0,\\
\frac{1}{16}(x_1+x_2)\sin(16\pi x_1)\sin(16\pi x_2)+0.1,& x_1+x_2> 0,\\
\end{cases}
\end{equation}
at the points:
$$\tilde{B}^7=\{(\tau(k_1) x_1+(1-\tau(k_1))y_1,\tau(k_2) x_2+(1-\tau(k_2))y_2):(x_1,x_2),(y_1,y_2)\in \tilde{X}^7,\, \tau(k)=0.3+0.1k,\,k=0,1,2,3,4\}$$
being $\tilde{X}^7$ a non-uniform grid constructed employing the formula described in Eq. \eqref{gridcartesiananouniforme} (see Figure \ref{fig7}(a)). We show the result in Figure \ref{fig7}(c) and (d) confirming that Gibbs phenomenon does not appear.

\begin{figure}[!ht]
\begin{center}
\begin{tabular}{cc}
\includegraphics[width=8.5cm]{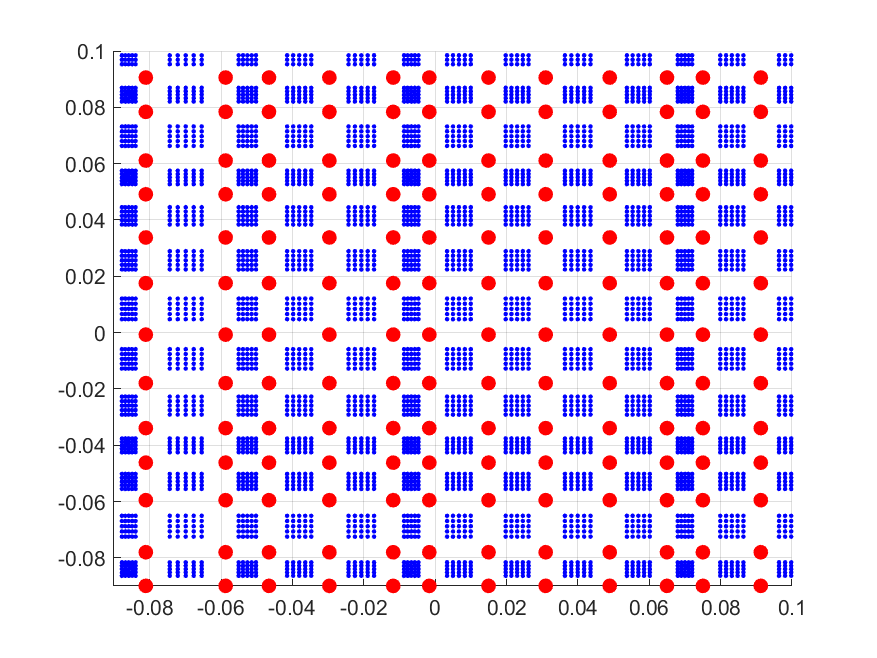} & \includegraphics[width=8.5cm]{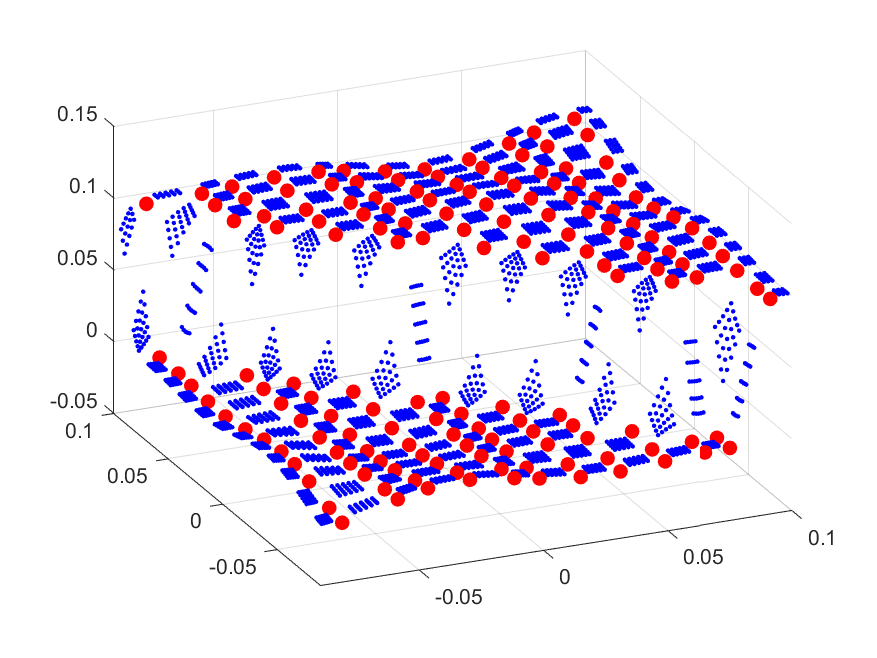}\\
 (a) & (b)\\
 \includegraphics[width=8.5cm]{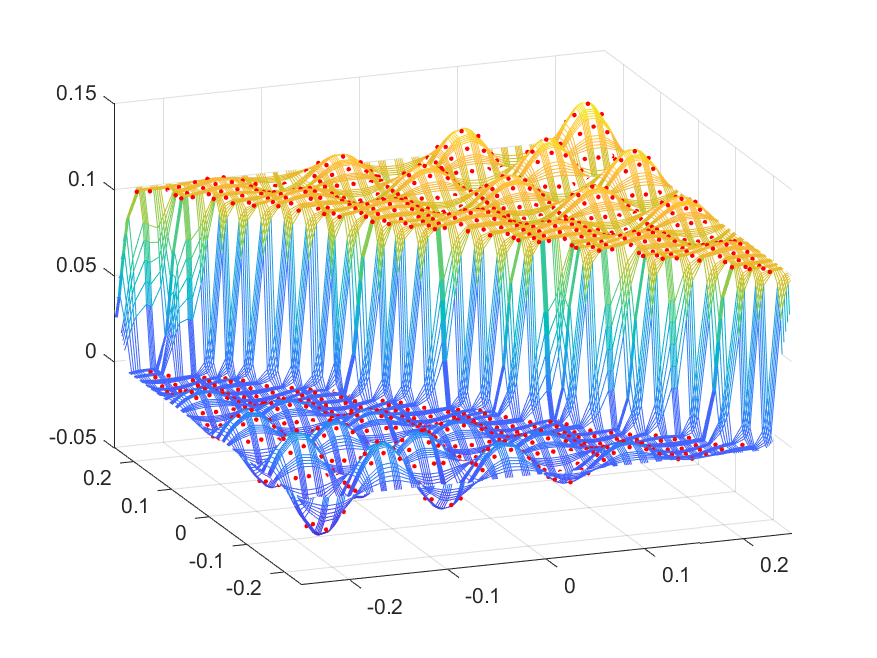} &
\includegraphics[width=8.5cm]{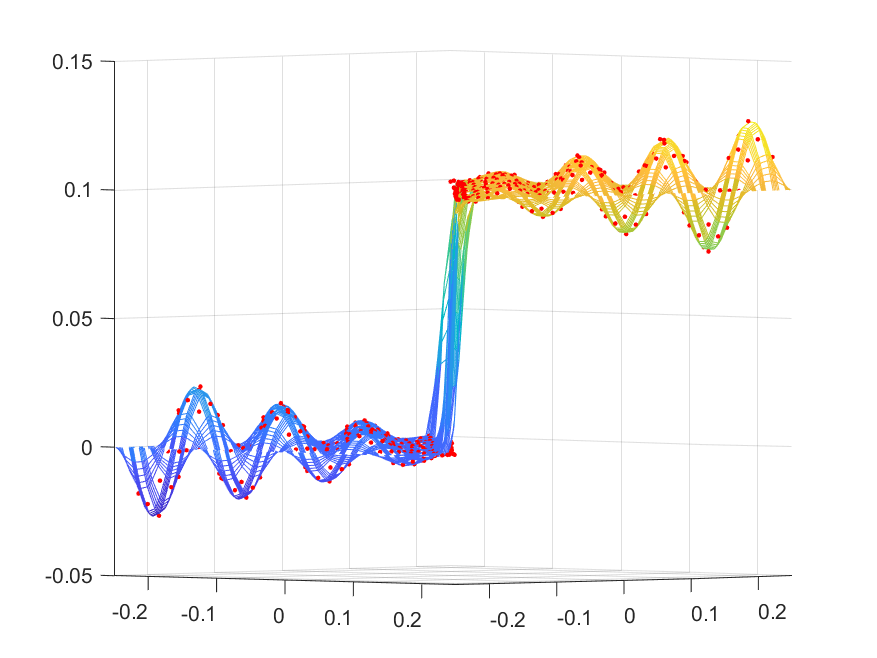}  \\
 (c) & (d)\\
\end{tabular}
\end{center}
\caption{(a) and (b) Non-uniform grid. Red: nodes, blue: points of interpolation. (c) and (d). Interpolation to the function \eqref{experimento5} using the new WENO-6 algorithm}
    \label{fig7}
 \end{figure}

\section{Conclusions and future work}

In this work, we have presented a new WENO method with a progressive order of accuracy close to the discontinuities. This new algorithm
is simpler computationally than the one presented in \cite{ARSY20} and more general. In particular, our main contributions can be summarised in the following: we design a new recursive method to compute the interpolation; we construct non-linear explicit formulas to calculate the weights to interpolate
any point in the central interval; we propose an algorithm for non-uniform grids, and finally, we extend the method to $n$ dimensions. This strategy presents a drawback, the stencils
used to interpolate are square and the number of points is larger than necessary to get the optimal order of accuracy. To improve this fact and
to apply this technique in partial differential equations contexts are the principal lines of our future work.

%
%
%

\end{document}